\documentclass[aap,preprint]{imsart}

\RequirePackage[OT1]{fontenc}
\RequirePackage{amsthm,amsmath,amsfonts}
\RequirePackage[numbers]{natbib}
\RequirePackage[colorlinks,citecolor=blue,urlcolor=blue]{hyperref}

\usepackage{blindtext}
\usepackage{enumitem}


\startlocaldefs
\numberwithin{equation}{section}
\theoremstyle{plain}
\newtheorem{theorem}{Theorem}[section]
\newtheorem{lemma}[theorem]{Lemma}

\theoremstyle{definition}
\newtheorem{definition}[theorem]{Definition}
\newtheorem{remark}[theorem]{Remark}

\endlocaldefs

\allowdisplaybreaks

\begin{document}

\begin{frontmatter}

\title{Stochastic  parabolic evolution  equations in M-type 2 Banach spaces}
\runtitle{Stochastic  parabolic evolution equations}

\begin{aug}

\author{T\^{o}n Vi$\hat{\d{e}}$t  T\d{a}\ead[label=e1]{tavietton[at]agr.kyushu-u.ac.jp}}


\runauthor{T$\hat{\rm o}$n Vi$\hat{\d{e}}$t T\d{a}}

\affiliation{Kyushu University}

\address{T\^{o}n Vi$\hat{\d{e}}$t T\d{a}\\
Center for Promotion of International Education and Research \\
 Kyushu University \\
 Hakozaki Fukuoka 812-8581, Japan\\
\printead{e1}}
\end{aug}

\begin{abstract}
This paper is devoted to studying  stochastic parabolic evolution  equations with additive noise in  Banach spaces of M-type 2. We construct both strict and mild solutions possessing very strong regularities. First, we consider  the  linear case. We prove  existence and uniqueness of strict  and mild solutions and show their maximal regularities.  Second, we explore the semilinear case. Existence, uniqueness and regularity of mild and strict solutions are shown. Regular dependence of mild solutions on initial data is also investigated. Finally, some applications to stochastic partial differential equations are presented.
\end{abstract}

\begin{keyword}[class=MSC]
\kwd[Primary ]{60H15}
\kwd{35R60}
\kwd[; secondary ]{47D06}
\end{keyword}

\begin{keyword}
\kwd{stochastic evolution equations}
\kwd{M-type 2 Banach spaces}
\kwd{strict and mild solutions}
\kwd{maximal regularity}
\kwd{analytic semigroups}
\end{keyword}
\end{frontmatter}

\section {Introduction}
We study the Cauchy problem for a stochastic parabolic  evolution  equation
\begin{equation} \label{E2}
\begin{cases}
dX+AXdt=[F_1(t)+F_2(X)]dt+ G(t)dW(t), \hspace{1cm} 0<t\leq T,\\
X(0)=\xi
\end{cases}
\end{equation}
in a complex separable Banach space $E$ with  norm $\|\cdot\|$ and the Borel $\sigma$-field $\mathcal B(E)$. 
Here, $W$ is a  cylindrical Wiener process on a separable Hilbert space $H,$ and is defined on a complete filtered  probability space  $(\Omega, \mathcal F,\mathcal F_t,\mathbb P)$; $G(t), 0\leq t\leq T,$ are $\gamma$-radonifying operators from $H$ to $E$; 
$F_1$ and $F_2$ are  measurable functions from $(\Omega_T, \mathcal P_T)$ and $(\Omega\times E, \mathcal F\times \mathcal B(E))$  into $(E,\mathcal B(E)),$ respectively (where $\Omega_T=[0,T]\times \Omega$ and $\mathcal P_T$ is the predictable $\sigma$-field on $\Omega_T)$; and $A$ is a densely defined, closed linear operator which generates an analytic semigroup on $E$.


It is known that many interesting models introduced from the real world can be formulated by deterministic linear or nonlinear evolution equations of parabolic type. These equations generally  generate not only  a dynamical system but also enjoy a global attractor even a finite-dimensional attractor. Existence of such an attractor then suggests that the phenomena whose processes are described by these parabolic evolution equations enjoy some robustness in a certain abstract sense. Some may be the pattern formation and others may be the specific structure creation.
In these cases, robustness of final states of process is one of main issues to be concerned. It is therefore quite natural in order to investigate the robustness to consider stochastic parabolic evolution equations.

Stochastic parabolic evolution equations  have already been  studied since 1970s. The main interest was to construct unique mild solutions, see Brze\'{z}niak   \cite{Brzezniak}, Da Prato-Zabczyk \cite{prato},   van Neerven et al.  \cite{van4}, and references therein. Some researchers, however, were devoted to constructing stronger solutions, strict solutions. 

Da Prato-Zabczyk \cite{prato}  presented a theory of stochastic equations in infinite dimensions. 
In the framework where the space $E$ is a Hilbert space, $W$  is a $Q$-Wiener process on $H\equiv E$ with   a  symmetric nonnegative nuclear operator $Q$ in $ L(E)$, the authors studied the linear of  \eqref{E2}, i.e. 
\begin{equation} \label{E0.1}
\begin{cases}
dX+AXdt=F(t)dt+ I dW(t),\hspace{1cm} 0<t\leq T,\\
X(0)=\xi,
\end{cases}
\end{equation}
where 
 $F$ is progressively measurable and integrable process on $[0,T]$,  and $I$ is the identity operator in $E$.
Under the  conditions:
\begin{itemize}
 \item [{\rm (i)}] 
  The trace of $  Q  $ is finite,
  \item [{\rm (ii)}]  $A^\beta Q^{\frac{1}{2}} $ for some $ \frac{1}{2}<\beta\leq 1  $ is a Hilbert-Schmidt operator,
\end{itemize}
 existence of strong solutions to \eqref{E0.1} has been shown. Clearly, the condition  {\rm (ii)}  is very restrictive. In the case of standard Brownian motions in $\mathbb R^d$, $Q$ becomes  the identity matrix of size $d.$  Then, {\rm (ii)} implies that $A$ is a bounded linear operator.

Brze\'{z}niak  \cite{Brzezniak}-\cite{Brzezniak3} constructed a theory of stochastic partial differential equations in  Banach spaces of M-type 2.  
In \cite{Brzezniak}, the author treated  a  stochastic linear evolution equation 
\begin{equation} \label{E0.5}
\begin{cases}
dX+AXdt = F(t)dt+ \sum_{j=1}^d B_j Xdw^j(t), \hspace{1cm}  0<t\leq T, \\
X(0)=\xi
\end{cases}
\end{equation}
in an M-type 2  separable Banach space $E$. Here $A$ is as above, 
 $F$ is progressively measurable and square integrable  process on $[0,T]$, 
 $B_j  (j=1,\dots,d)$ are unbounded linear operators in  $E$, 
and $w^j (j=1,\dots,d)$ are  independent standard Brownian motions.
It has been assumed that  
  $$\sum_{j=1}^d\|B_j u\|_{\mathcal D_A(\frac{1}{2},2)}^2 \leq C_1 \|u\|_{\mathcal D(A)}^2 + C_2\|u\|_{\mathcal D_A(\frac{1}{2},2)}^2, \hspace{1cm} u\in \mathcal D(A),$$ 
where $\mathcal D_A(\frac{1}{2},2)=\{u\in E; \int_0^\infty \|Ae^{tA}u\|^2 dt<\infty\} $ and that  $$\xi \in L^2(\Omega, \mathcal D_A(\frac{1}{2},2)).$$ 
Existence of a unique strict solution to \eqref{E0.5}  has been then proved in the space: 
$$X\in L^2([0,T]\times \Omega;\mathcal D(A)) \cap C([0,T];L^2(\Omega;\mathcal D_A(\frac{1}{2},2))).$$

 In addition,   
 regularity of mild solutions is also presented. Under the conditions:
 \begin{itemize}
   \item [\rm (i)]
 $A^{-\beta} G \in \mathcal M^p([0,T];L(U;E))$ 
\item [\rm (ii)] $\mathbb E \|A^\delta \xi\|^p<\infty$
\item [\rm (iii)] $F_1\in \mathcal M^p([0,T];\mathcal D(A^{\alpha-1}))$ for some $0\leq \delta<\gamma<\delta+\frac{1}{p}<\min\{\alpha,\frac{1}{2}-\beta\}$
 and $p\geq 2$, where $ \mathcal M^p([0,T];\mathcal S)$ is the space of all progressive measurable functions $u$ from $[0,T]$ to $\mathcal S$ such that $$\mathbb E \int_0^T \|u(s)\|_{\mathcal S}^pdu<\infty,$$
\end{itemize}
 existence of a unique mild solution  to \eqref{E0.5} has been  proved in the space: 
 $$X\in \mathcal M^p([0,T];\mathcal D(A^\gamma))\cap L^p(\Omega;\mathcal C([0,T];\mathcal D(A^\delta)).$$

  The equation  \eqref{E2} is already considered in  \cite{Brzezniak1}.
Under the local Lipschitz condition on $F_2 $
\begin{equation}  \label{E0.6}
 \|F_2(u)-F_2(v)\|_{\mathcal D(A^{\alpha-1})} \leq C \|u-v\|_{\mathcal D(A^\gamma)}^\nu  \|u-v\|_{\mathcal D(A^\delta)}^{1-\nu}
\end{equation}
for some $0\leq \nu <1$, 
  existence of  maximal local mild solutions of  \eqref{E2} has been shown. 

van Neerven et al. \cite{van1} constructed a theory of stochastic integration in UMD Banach spaces. The authors then systematically studied  stochastic evolution equations in UMD Banach spaces (see \cite{van2}-\cite{van6} and references therein).

A theory of abstract (deterministic) parabolic evolution equations was introduced by Yagi  \cite{yagi}. In this theory, strict solutions to the deterministic version of  \eqref{E2} (i.e. $G\equiv 0$ on  $[0,T]$) were  investigated. For the linear case (i.e. $F_2\equiv 0$),  
when $ F_1$ belongs to a weighted H\"{o}lder continuous function space $\mathcal F^{\beta, \sigma}((0,T];E))$ (see the  definition in Section \ref{section2}) and  $\xi\in E$ is arbitrary,  then existence of a unique  solution  was  proved in the space (see  \cite{yagi0}):
$$X\in \mathcal C^1((0,T];E)\cap \mathcal C([0,T];E) \cap \mathcal C((0,T];\mathcal D(A)).$$
Furthermore, the author obtained the maximal regularity for both initial value $\xi \in \mathcal D(A^\beta)$ and external force function $F_1\in \mathcal F^{\beta, \sigma}((0,T];E)$  (see \cite{yagi1}-\cite{yagi}):
$$A^\beta X\in \mathcal  C([0,T];E),$$
$$\frac{dX}{dt}, AX \in \mathcal F^{\beta, \sigma}((0,T];E).$$
For the  semilinear case ($F_2\not\equiv 0$), in  \cite{Osaki} it was  assumed that 
$\xi\in \mathcal D(A^\beta), F_1\in  \mathcal F^{\beta, \sigma}$ $((0,T];E).$ In addition, $F_2$ is a nonlinear operator from $\mathcal D(A^\eta)$ into $E$  and satisfies  a Lipschitz condition of the form
\begin{align*}
&\|F_2(u)-F_2(v)\|\leq \varphi (\|A^\beta u\|+A^\beta v\|)\\
&\times [\|A^\eta(u-v)\|+ (\|A^\eta u\|+\|A^\eta v\|)\|A^\beta (u-v)\|], \quad u,v\in \mathcal D(A^\eta),
\end{align*}
where $0<\beta\leq \eta<1, 0<\sigma<\min\{\beta, 1-\eta\}$ and $\varphi$ is some increasing function. 
Existence of a unique local  solution was then proved in the function space:
$$X\in \mathcal C((0,\bar T];\mathcal D(A))\cap  \mathcal C([0,\bar T];\mathcal D(A^\beta))\cap  \mathcal C^1((0,\bar T];E),$$
$$\frac{dX}{dt}, AX\in \mathcal F^{\beta, \sigma}((0,\bar T];E),$$
where $\bar T$ depends only on  norms  $\|F_1\|_{\mathcal F^{\beta, \sigma}}$ and $\|A^\beta \xi\|$.

In this paper, we want to extend the techniques of \cite{yagi} to the stochastic equation \eqref{E2}. Although we only treat additive noise, we  construct both strict and mild solutions possessing very strong regularities. In some cases, these regularities are maximal. 

Our contribution is twofold. 
First, we  handle  the linear case of \eqref{E2}. Existence of a unique strict solution as well as its   maximal space-time regularity are proved. Maximal space-time regularity of mild solutions is also shown.

Second, we  treat  \eqref{E2} in general.  
By using the fix-point method, we prove  existence and uniqueness of a local mild solution, which is defined on an interval $[0,\bar T]\subset [0,T],$ where $\bar T>0$ is a  nonrandom constant. (Note that another common way to prove  existence of mild solutions is to use the cut-off method. This method however results in that local mild solutions are defined on 
$[0,\tau),$ where $\tau$ is a stopping time (see  \cite{Brzezniak1,van2})). 
Based on solution formula, we  give a sufficient condition for existence of a unique global strict solution to \eqref{E2}. In addition,  regularity and regular dependence on initial data of  local mild solutions are also investigated.

For this current research, we  use semigroup methods, which takes idea from ones for (deterministic) parabolic evolution equations. More precisely, the methods have been  initiated by the invention of the analytic semigroups in the middle of the last century (see Hille \cite{Hille} and Yosida \cite{Yosida}).
 The semigroup methods are characterized  by precise formulas representing the solutions of the Cauchy problem for stochastic  evolution equations.

The analytical semigroup $S(t)=e^{-tA}$ generated by the linear operator $(-A)$ directly provides  a mild solution to the Cauchy problem for a stochastic linear evolution equation
\begin{equation*}
\begin{cases}
dX+AXdt=F(t)dt + G(t)dW(t), \hspace{1cm} 0<t\leq T,\\
X(0)=\xi.
\end{cases}
\end{equation*}
The solution is given by the formula 
$$X(t)=S(t)\xi+\int_0^t S(t-s)F(s)ds+ \int_0^t S(t-s)G(s)dW(s).$$
 Similarly, a solution to \eqref{E2}
can be obtained as a solution of an integral equation 
$$X(t)=S(t)\xi+\int_0^t S(t-s)[F_1(s)+F_2(X(s))]ds+ \int_0^t S(t-s)G(s)dW(s).$$

These  solutions formulas provide us important information on solutions such as uniqueness, regularity, smoothing effect and so forth. Especially, for nonlinear problems one can derive Lipschitz continuity of solutions with respect to  initial values, even their Fr\'{e}chet differentiability. This powerful approach has been used for the study of stochastic evolution equations in Hilbert spaces.
Some early work  has been proposed by Curtain-Falb \cite{CurtainFalb}, Dawson  \cite{Dawson0}. After that,  there are many  papers and monographs following it.  We refer for instance  to  \cite{Brzezniak}-\cite{Brzezniak3}, \cite{prato-Flandoli}-\cite{prato}, \cite{Gawarecki},  \cite{PrevotRockner}-\cite{van6},  
and references therein.

The organization of the  paper is as follows. Section \ref{section2} is preliminary. We introduce notions such as weighted H\"{o}lder continuous function spaces,  analytical semigroups, cylindrical Wiener processes, strict and mild solutions to \eqref{E2}.

Section \ref{section4} deals with  the linear case of \eqref{E2}, i.e. $F_2\equiv 0$ in $E$. We prove existence of unique strict solutions (Theorems \ref{Th1} and  \ref{Th2}), provided that $F_1$ and $G$ belong to weighted H\"{o}lder continuous function spaces. Maximal regularity of strict solutions  is then presented (Theorem \ref{Th3}). Maximal regularity of mild solutions is also shown (Theorem \ref{Th4}).

Section \ref{section5} studies the general case of \eqref{E2}. Existence, uniqueness and regularity of strict and mild solutions are proved (Theorems \ref{Th8}, \ref{Th9} and \ref{Th6}). Regular dependence of mild solutions on initial data is also shown (Theorem \ref{Th10}).

In Section \ref{section6}, we present some examples of stochastic partial differential equations (PDEs) to illustrate our abstract results.

 \section{Preliminary}  \label{section2}
\subsection{Function spaces and analytical semigroups}
First, let us  review some notions of function spaces. For  $0<\sigma<\beta< 1,$ denote by   $\mathcal F^{\beta, \sigma}((0,T];E)$ the space  of all $E$-valued continuous functions $F$ on $(0,T]$    with the following three properties:
\begin{itemize}
  \item [\rm (i)]  
  \begin{equation}  \label{H11.2}
  t^{1-\beta} F(t)  \text{  has a limit as   } t\to 0.
  \end{equation}
  \item [\rm (ii)]  $F$ is H\"{o}lder continuous with  exponent $\sigma$ and   weight $s^{1-\beta+\sigma}$, i.e.
  \begin{equation} \label{H11.3}
\begin{aligned}
\sup_{0\leq s<t\leq T} & \frac{s^{1-\beta+\sigma}\|F(t)-F(s)\|}{(t-s)^\sigma}\\
&=\sup_{0\leq t\leq T}\sup_{0\leq s<t}\frac{s^{1-\beta+\sigma}\|F(t)-F(s)\|}{(t-s)^\sigma}<\infty.
\end{aligned}
\end{equation}
  \item [\rm (iii)] 
  \begin{equation} \label{Fbetasigma3}
  \lim_{t\to 0} w_{F}(t)=0, 
  \end{equation}
  where $w_{F}(t)=\sup_{0\leq s  <t}\frac{s^{1-\beta+\sigma}\|F(t)-F(s)\|}{(t-s)^\sigma}$.
\end{itemize}  
It is easy to see that   $\mathcal F^{\beta, \sigma}((0,T];E)$ is a Banach space with   norm
$$\|F\|_{\mathcal F^{\beta, \sigma}(E)}=\sup_{0\leq t\leq T} t^{1-\beta} \|F(t)\|+ \sup_{0\leq s<t\leq T} \frac{s^{1-\beta+\sigma}\|F(t)-F(s)\|}{(t-s)^\sigma}.$$
 
The space $\mathcal F^{\beta, \sigma}((0,T];E)$ is called a {\it weighted H\"{o}lder continuous function space} that is introduced by Yagi  (see \cite{yagi}). Clearly,  for  $F\in \mathcal F^{\beta, \sigma}((0,T];E),$
\begin{equation} \label{H12}  
\|F(t)\|\leq \|F\|_{\mathcal F^{\beta, \sigma}(E)} t^{\beta-1},  \hspace{1cm} 0<t\leq T,  
\end{equation}
and 
\begin{align}
 \|F(t)-F(s)\|
 & \leq w_{F}(t) (t-s)^{\sigma} s^{\beta-\sigma-1}    \label{H12.5} \\
& \leq \|F\|_{\mathcal F^{\beta, \sigma}(E)} (t-s)^{\sigma} s^{\beta-\sigma-1}, \hspace{1cm} 0<s<t\leq T.  \notag
\end{align}
In addition, it is not hard to see that
\begin{equation} \label{FbetaFgammasigmaSpaceProperty}
\mathcal F^{\gamma,\sigma} ((0,T];E)\subset \mathcal F^{\beta,\sigma} ((0,T];E), \hspace{1cm} 0<\sigma<\beta<\gamma< 1.
\end{equation}
\begin{remark}  \label{rm1}
\begin{itemize}
  \item [{\rm (a)}]
The space $\mathcal F^{\beta, \sigma}((0,T];E)$ is not  a trivial space.  The function $F$ defined by  $F(t) =t^{\beta-1} f(t), 0<t\leq T,$ belongs to this space, where  $f$ is any   function  
in  $ \mathcal C^\sigma([0,T];E)$ such that  $f(0)=0,$ where $ \mathcal C^\sigma$ is the space of $\sigma$-H\"{o}lder continuous functions.
\item [{\rm (b)}]  The space $\mathcal F^{\beta, \sigma}((a,b];E)$, $0\leq a<b<\infty$, is defined in a similar way. For more details, see \cite{yagi}.
\end{itemize}
\end{remark}

Next, let us  recall the notion of sectorial operators and some properties of  analytical semigroups generated by them.

A densely defined, closed linear operator $A$ is said to be sectorial if it satisfies two conditions:
\begin{itemize}
  \item [(\rm{Aa})] The spectrum $\sigma(A)$ of $A$ is  contained in an open sectorial domain $\Sigma_{\varpi}$: 
\begin{equation*} \label{H1} 
\sigma(A) \subset  \Sigma_{\varpi}=\{\lambda \in \mathbb C: |\arg \lambda|<\varpi\}, \quad \quad 0<\varpi<\frac{\pi}{2}.
       \end{equation*}
  \item [(\rm{Ab})] The resolvent of $A$ satisfies the estimate  
\begin{equation*} \label{H2}
          \|(\lambda-A)^{-1}\| \leq \frac{M_{\varpi}}{|\lambda|}, \quad\quad\quad \quad   \lambda \notin \Sigma_{\varpi}
     \end{equation*}
     with some   $M_{\varpi}>0$ depending only on the angle $\varpi$.
     \end{itemize}

When $A$ is a sectorial operator, it generates an analytical semigroup. 
\begin{theorem}
Let $A$ be a sectorial operator. Then, $(-A)$ generates an analytical   semigroup $S(t)=e^{-tA}, 0\leq t<\infty,$ having the following properties:
\begin{itemize}
\item  [\rm (i)] For  any $\theta\geq 0$, there exists $\iota_\theta>0$ such that 
\begin{equation} \label{H10}
\|A^\theta S(t)\| \leq \iota_\theta t^{-\theta}, \hspace{1cm}  0< t< \infty.   
\end{equation}
In particular, there exists $\nu>0$ such that
\begin{equation} \label{H11}
\|S(t)\| \leq \iota_0 e^{-\nu t}, \hspace{1cm}  0< t< \infty.
\end{equation}
 \item  [\rm (ii)]  For any $0<\theta\leq 1,$ 
\begin{equation} \label{H11.5}
\|[S(t)-I]A^{-\theta}\|\leq \frac{\iota_{1-\theta}}{\theta} t^\theta, \hspace{1cm}  0< t< \infty.
\end{equation}
\item  [\rm (iii)]  For  $0<\sigma<\beta< 1$ and $U\in \mathcal D(A^\beta)$, 
\begin{equation} \label{Eq53}
AS(\cdot)U \in \mathcal F^{\beta,\sigma}((0,T];E).
\end{equation}
\end{itemize}
\end{theorem}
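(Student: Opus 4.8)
The plan is to obtain $S(t)$ from the Riesz--Dunford contour integral and then to extract all three estimates from the resolvent bound (Ab). Fix an intermediate angle $\varpi<\varpi'<\frac\pi2$; since $0$ lies in the resolvent set of $A$ (this is implicit in the present setting --- it is already needed to make sense of $A^{-\theta}$ in (ii), and together with (Aa) it forces $\sigma(A)\subset\{\operatorname{Re}\lambda\ge\nu\}$ for some $\nu>0$), I can choose a contour $\Gamma$ consisting of the two rays $\{re^{\pm i\varpi'}:r\ge\rho\}$ joined by the arc $\{\rho e^{i\phi}:|\phi|\le\varpi'\}$, with $\rho>0$ small and, after a slight rightward shift, contained in $\{\operatorname{Re}\lambda\ge\nu\}$. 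Setting
$$
S(t)=\frac1{2\pi i}\int_\Gamma e^{-t\lambda}(\lambda-A)^{-1}\,d\lambda,\qquad t>0,\qquad S(0)=I,
$$
the fact that this is an analytic $C_0$-semigroup with generator $-A$ is classical (Hille, Yosida; see also \cite{yagi}), and I would just cite it --- the substance of the theorem is the quantitative part.

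For (i): I differentiate/interpolate under the integral, using the standard identity $A^\theta S(t)=\frac1{2\pi i}\int_\Gamma \lambda^\theta e^{-t\lambda}(\lambda-A)^{-1}\,d\lambda$ (legitimate since $\lambda\mapsto\lambda^\theta e^{-t\lambda}(\lambda-A)^{-1}$ is Bochner integrable on $\Gamma$ for $t>0$ and $A^\theta$ is closed), so that by (Ab)
$$
\|A^\theta S(t)\|\le\frac{M_\varpi}{2\pi}\int_\Gamma|\lambda|^{\theta-1}e^{-t\operatorname{Re}\lambda}\,|d\lambda|,
$$
and the scaling $\lambda\mapsto\lambda/t$ converts this into $\iota_\theta t^{-\theta}$ (the integral converges at the origin thanks to the arc and at infinity because $\operatorname{Re}\lambda\ge|\lambda|\cos\varpi'$ there); keeping $\Gamma$ inside $\{\operatorname{Re}\lambda\ge\nu\}$ produces the factor $e^{-\nu t}$ when $\theta=0$, which is \eqref{H11}. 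Part (ii) then follows directly: for $y\in E$ and $s>0$ one has $S(s)y\in\mathcal D(A)$ with $\frac d{ds}A^{-\theta}S(s)y=-A^{1-\theta}S(s)y$; integrating over $(\varepsilon,t)$, letting $\varepsilon\to0$ (the integral $\int_0^t\|A^{1-\theta}S(s)\|\,ds\le\iota_{1-\theta}\int_0^t s^{\theta-1}\,ds$ converges since $0<\theta\le1$), and using $A^{-\theta}S(t)=S(t)A^{-\theta}$ gives $[S(t)-I]A^{-\theta}y=-\int_0^t A^{1-\theta}S(s)y\,ds$, hence $\|[S(t)-I]A^{-\theta}\|\le\frac{\iota_{1-\theta}}{\theta}t^\theta$, i.e. \eqref{H11.5}.

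The real work is (iii), where I must verify the three defining conditions of $\mathcal F^{\beta,\sigma}$ for $F(t):=AS(t)U=A^{1-\beta}S(t)A^\beta U$ with $U\in\mathcal D(A^\beta)$. Continuity of $F$ on $(0,T]$ is clear from norm-continuity of $t\mapsto A^{1-\beta}S(t)$, and \eqref{H10} with $\theta=1-\beta$ gives $\|t^{1-\beta}F(t)\|\le\iota_{1-\beta}\|A^\beta U\|$; for the limit \eqref{H11.2} I approximate $U$ by $V\in\mathcal D(A)$ (and write $F_V(t):=AS(t)V$), note $t^{1-\beta}F_V(t)=t^{1-\beta}S(t)AV\to0$, and bound the error $\|t^{1-\beta}(F(t)-F_V(t))\|$ by $\iota_{1-\beta}\|A^\beta(U-V)\|$, so the limit exists and equals $0$. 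For the H\"older condition \eqref{H11.3} I use
$$
F(t)-F(s)=[S(t-s)-I]AS(s)U=[S(t-s)-I]A^{-\sigma}\,A^{1+\sigma-\beta}S(s)\,A^\beta U,
$$
so by (ii) and (i), $\|F(t)-F(s)\|\le\frac{\iota_{1-\sigma}}{\sigma}(t-s)^\sigma\,\iota_{1+\sigma-\beta}\,s^{\beta-\sigma-1}\|A^\beta U\|$, which is exactly \eqref{H11.3} after multiplying by $s^{1-\beta+\sigma}(t-s)^{-\sigma}$. The main obstacle is \eqref{Fbetasigma3}, $w_F(t)\to0$, since the last bound is only uniform, not small. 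Here I split $U=V+(U-V)$ with $V\in\mathcal D(A^{\beta'})$, $\beta<\beta'<1$ (dense in $\mathcal D(A^\beta)$), and repeat the computation with $\beta'$ in place of $\beta$:
$$
\frac{s^{1-\beta+\sigma}\|F_V(t)-F_V(s)\|}{(t-s)^\sigma}\le\frac{\iota_{1-\sigma}\iota_{1+\sigma-\beta'}}{\sigma}\|A^{\beta'}V\|\,s^{\beta'-\beta}\le C_V\,t^{\beta'-\beta}\to0,
$$
so $\limsup_{t\to0}w_F(t)\le\frac{\iota_{1-\sigma}\iota_{1+\sigma-\beta}}{\sigma}\|A^\beta(U-V)\|$ for every such $V$; letting $V\to U$ in $\mathcal D(A^\beta)$ gives $w_F(t)\to0$ and completes (iii). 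Throughout, the only delicate point is this balancing of the weight $s^{1-\beta+\sigma}$ against the singularity $s^{\beta-\sigma-1}$ of $F(t)-F(s)$, with the extra factor $s^{\beta'-\beta}$ bought from a slightly smoother datum providing the decay.
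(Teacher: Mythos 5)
Your proof is correct: the contour-integral derivation of \eqref{H10}--\eqref{H11}, the integration of $A^{1-\theta}S(s)$ for \eqref{H11.5}, and the density approximations (by $\mathcal D(A)$ for the limit \eqref{H11.2} and by $\mathcal D(A^{\beta'})$, $\beta'>\beta$, for $w_F(t)\to0$) are exactly the standard arguments. The paper itself gives no proof of this theorem but simply cites Yagi's monograph, and your write-up reproduces the approach of that reference, including the key factorization $F(t)-F(s)=[S(t-s)-I]A^{-\sigma}\,A^{1+\sigma-\beta}S(s)\,A^{\beta}U$ used throughout the rest of the paper.
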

For the proof, see  \cite{yagi}.

\subsection{Cylindrical Wiener processes}
Let us review a central notion to the theory of stochastic evolution equations, say  cylindrical Wiener processes on a separable Hilbert space $H$. The following  definition is given by Curtain-Falb \cite{CurtainFalb}. 
\begin{definition} \label{T6}
Let $Q$ be a  symmetric nonnegative nuclear operator in $ L(H).$
An $H$-valued stochastic process $W$ defined on a filtered  probability space  $(\Omega, \mathcal F,\mathcal F_t,\mathbb P)$ is called a $Q$-Wiener process if it satisfies the conditions:
\begin{enumerate}
  \item [(i)] $W(0)=0$  a.s.
  \item [(ii)] $W$ has continuous sample paths.
  \item [(iii)] $W$ has independent increments.
  \item  [(iv)] The law of $W(t)-W(s), 0<s\leq t,$ is a Gaussian measure on $H$ with mean $0$ and covariance $Q.$  
\end{enumerate}
\end{definition}
Note that a nuclear operator  is a bounded linear operator with finite trace.  Here, the trace of  $Q$ is defined by 
$$Tr (Q)=\sum_{i=1}^\infty \langle Qe_i,e_i\rangle,$$
where $\{e_i\}_{i=1}^\infty$ is a complete orthonormal basis in $H$. 

Let  fix a larger Hilbert space $H_1$ such that $H$ is embedded continuously into $H_1$ and the embedding 
$J\colon H\to H_1$ is Hilbert-Schmidt (i.e. $\sum_{i=1}^\infty \|Je_i\|_{H_1}^2<\infty).$ For example (see Hairer \cite{Hairer}), one takes $H_1$ to be the closure of $H$ under the norm
$$\|h\|_{H_1}=\left[\sum_{n=1}^\infty \frac{\langle h,e_n\rangle_H^2}{n^2}\right]^{\frac{1}{2}}.$$
For any $h_1\in H_1$, 
\begin{align*}
\langle JJ^*e_m, h_1\rangle_{H_1}
&=\langle J^*e_m, J^*h_1\rangle_H\\
&=\langle\sum_{k=1}^\infty e_k \langle J^*e_m,e_k\rangle_H, \sum_{k=1}^\infty e_k \langle J^*h_1,e_k\rangle_H\rangle_H\\
&= \sum_{k=1}^\infty \langle J^*e_m,e_k\rangle_H  \langle J^*h_1,e_k\rangle_H\\
&=\sum_{k=1}^\infty \langle e_m,Je_k\rangle_{H_1}  \langle h_1,Je_k\rangle_{H_1}\\
&=\sum_{k=1}^\infty \langle e_m,e_k\rangle_{H_1}  \langle h_1,e_k\rangle_{H_1}\\
&=\|e_m\|_{H_1}^2  \langle h_1,e_m\rangle_{H_1}\\
&= \frac{1}{m^2} \langle h_1,e_m\rangle_{H_1}, \hspace{1.5cm}  m=1,2,\dots
\end{align*}
Thereby, $JJ^*e_m=\frac{1}{m^2} e_m $ for  $m=1,2,\dots$ Therefore,
$$Tr (JJ^*)=\sum_{m=1}^\infty \langle JJ^*e_m,e_m\rangle=\sum_{m=1}^\infty \frac{1}{m^2}<\infty.$$
Thus, $JJ^*$ is a nuclear operator. The following definition is taken from Da Prato-Zabczyk \cite{prato} (see also Hairer \cite{Hairer}).
\begin{definition} \label{T7}
An $H_1$-valued $JJ^*$-Wiener process   is called a cylindrical Wiener process on $H$.
\end{definition}
\begin{remark}
There is another way to define cylindrical Wiener processes. One can define  a cylindrical Wiener process as the canonical process for any Gaussian measure with Cameron-Martin space. For more details, see Hairer  \cite{Hairer}.
\end{remark}

\subsection{Stochastic integrals in Banach spaces of M-type 2}
\begin{definition}[Pisier \cite{Pisier}]  \label{MType2BanachSpace}
A Banach space $E$ is said to be of martingale type 2 (or M-type $2$ for abbreviation),  if there is a constant $c(E)$ such that for all $E$-valued martingales $\{M_n\}_n,$ it holds true that 
$$\sup_n \mathbb E\|M_n\|^2 \leq c(E) \sum_{n\geq 0}\mathbb E \|M_n-M_{n-1}\|^2,$$
where $M_{-1}=0$.
\end{definition}
It is known that  Hilbert spaces are of M-type $2$ and that, when $2\leq p<\infty,$
the $L^p$ space is the same.

When $E$ is of M-type 2, stochastic integrals  with respect to  cylindrical Wiener processes  can be constructed in a quite similar way as for the  usual  It\^{o}  integrals.  There are several literature dealing with this subject (see Brze\'{z}niak \cite{Brzezniak,Brzezniak1}, Dettweiler \cite{Dettweiler,Dettweiler3}, 
 van Neerven et al. \cite{Haak,van2}). Let us explain the construction.

Let $W$ be a cylindrical Wiener process on a separable Hilbert space $H$, which is defined on a  filtered  probability space  $(\Omega, \mathcal F,\mathcal F_t,\mathbb P)$.
\begin{definition}  
Let $(S,\Sigma)$ be a measurable space. 
\begin{itemize}
  \item [(i)] A function $\varphi\colon S\to E$ is said to be strongly measurable if it is the pointwise limit of a sequence of simple functions. 
  \item [(ii)] A  function $\phi\colon S\to L(H;E)$  is said to be $H$-strongly measurable if $\phi(\cdot)h  \colon S\to E$ is strongly measurable for any $h\in H$.
\end{itemize}  
\end{definition}
\begin{definition}[$\gamma$-radonifying operators] \label{Def10}
\begin{itemize}
  \item [(i)]
Let $\{\gamma_n\}_{n=1}^\infty$ be a sequence of independent standard Gaussian random variables on a probability space $(\Omega', \mathcal F',\mathbb P')$. Let $\{e_n\}_{n=1}^\infty$ be an orthonormal basis of $H$. A bounded linear operator $\phi$ from $H$ to $E$ is called a $\gamma$-radonifying operator if the Gaussian series $\sum_{n=1}^\infty \gamma_n \phi e_n$ converges in $L^2(\Omega',E)$.
 \item [(ii)] 
The set of all $\gamma$-radonifying operators is denoted by $\gamma(H;E)$. 
\end{itemize}
\end{definition}

According to Definition \ref{Def10}, a  natural norm in $\gamma(H;E)$ is defined by 
$$\|\phi\|_{\gamma(H;E)}=\Big[\mathbb E'\Big\|\sum_{n=1}^\infty \gamma_n \phi e_n\Big\|^2\Big]^{\frac{1}{2}}.$$
It is easily seen that the norm is independent of the orthonormal basis $\{e_n\}_{n=1}^\infty$ and the Gaussian sequence $\{\gamma_n\}_{n=1}^\infty$. Furthermore, the normed space $(\gamma(H;E), \|\cdot\|_{\gamma(H;E)})$ is complete (see \cite{van1,van2}). The following lemma is  used very often.
\begin{lemma}  \label{T2.12}
Let $\phi_1\in L(E)$ and $\phi_2 \in \gamma(H;E)$. Then, $\phi_1\phi_2 \in \gamma(H;E)$ and
$$\|\phi_1\phi_2\|_{\gamma(H;E)} \leq \|\phi_1\|_{L(E)} \|\phi_2\|_{\gamma(H;E)}.$$
\end{lemma}
\begin{definition}  \label{Def11}
Denote by $\mathcal I^2([0,T])$ the class of   all adapted processes $\phi\colon [0,T]\times \Omega \to \gamma (H;E)$ in $L^2((0,T)\times \Omega; \gamma (H;E)),$ which are $H$-strongly measurable. 
\end{definition}

For every $\phi \in \mathcal I^2([0,T]),$ the stochastic integral $\int_0^T \phi(t)dW(t)$  is defined as a limit of integrals of adapted step processes. By a localization argument one can extend stochastic integrals  to the class $\mathcal I([0,T])$ of all $H$-strongly measurable and adapted processes $\phi\colon [0,T]\times \Omega \to \gamma (H;E)$ which are  in $L^2((0,T); \gamma (H;E))$ a.s. (see \cite{van1,van2}).
 \begin{theorem}\label{T9}
Let $E$ be a Banach space of M-type 2. Let $W$ be a cylindrical Wiener process on a separable Hilbert  space $H$. Then, there exists  $c(E)>0$ depending only on $E$ such that
$$\mathbb E\Big\|\int_0^T \phi(t)dW(t)\Big\|^2\leq c(E) \|\phi\|_{L^2((0,T)\times \Omega; \gamma (H;E))}^2, \hspace{1cm} \phi \in \mathcal I^2([0,T]),$$
here $\|\phi\|_{L^2((0,T)\times \Omega; \gamma (H;E))}^2=\mathbb E \int_0^T \|\phi(s)\|_{\gamma (H;E)}^2ds.$
In addition, for every $\phi\in \mathcal I([0,T]),$
\begin{itemize}
  \item [{\rm (i)}]  $\{\int_0^t \phi(s)dW(s), 0\leq t\leq T\}$ is an $E$-valued  continuous local martingale and  a Gaussian process.
  \item [{\rm (ii)}] (Burkholder-Davis-Gundy inequality) For any $p>1, $ 
$$\mathbb E \sup_{t\in [0,T]} \Big\|\int_0^t \phi(s) dW(s)\Big\|^p \leq  c_p(E) \mathbb E\Big[\int_0^T \|\phi(s)\|_{\gamma (H;E)}^2 ds\Big]^{\frac{p}{2}},$$
where $c_p(E)$ is some constant depending only on  $p$ and  $ E$. 
\end{itemize}
\end{theorem}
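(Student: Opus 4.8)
The plan is to follow the standard route for stochastic integration in M-type 2 spaces: first establish the It\^o-type isometry inequality for adapted step processes, then pass to the limit to obtain it on all of $\mathcal I^2([0,T])$, and finally localize to $\mathcal I([0,T])$ and derive the pathwise regularity and the Burkholder-Davis-Gundy (BDG) bound. For a step process $\phi=\sum_{j=0}^{N-1}\mathbf 1_{(t_j,t_{j+1}]}\phi_j$ with $\phi_j$ an $\mathcal F_{t_j}$-measurable element of $\gamma(H;E)$, one writes $\int_0^T\phi\,dW=\sum_j\phi_j\bigl(W(t_{j+1})-W(t_j)\bigr)$, where the increment is interpreted through the $\gamma(H;E)$-action on the cylindrical Wiener process. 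The key point is that the partial sums $M_n=\sum_{j\le n}\phi_j\Delta_jW$ form an $E$-valued martingale, so the M-type 2 inequality of Definition \ref{MType2BanachSpace} applies and gives
\begin{equation*}
\mathbb E\Bigl\|\int_0^T\phi\,dW\Bigr\|^2\le c(E)\sum_{j}\mathbb E\|\phi_j\Delta_jW\|^2 .
\end{equation*}
Then I would show $\mathbb E\|\phi_j\Delta_jW\|^2=(t_{j+1}-t_j)\,\mathbb E\|\phi_j\|_{\gamma(H;E)}^2$, using independence of $\Delta_jW$ from $\mathcal F_{t_j}$, the fact that $\Delta_jW/\sqrt{t_{j+1}-t_j}$ has the law of a standard Gaussian on $H$, and the very definition of the $\gamma$-radonifying norm (the Gaussian series $\sum_n\gamma_n\phi_j e_n$ is exactly $\phi_j$ applied to the cylindrical Gaussian). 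Summing over $j$ gives precisely $c(E)\,\mathbb E\int_0^T\|\phi(s)\|_{\gamma(H;E)}^2\,ds$, which is the claimed isometry inequality on step processes.

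Next I would extend the inequality from step processes to $\phi\in\mathcal I^2([0,T])$ by approximation: step processes are dense in $L^2((0,T)\times\Omega;\gamma(H;E))$ among the adapted, $H$-strongly measurable processes (this is where $H$-strong measurability is used, to approximate each $\phi(\cdot)h$), so the stochastic integral extends by continuity to a bounded linear map from $\mathcal I^2([0,T])$ to $L^2(\Omega;E)$, and the inequality passes to the limit. For the extension to $\phi\in\mathcal I([0,T])$ one uses a localization argument: set $\tau_n=\inf\{t:\int_0^t\|\phi(s)\|_{\gamma(H;E)}^2\,ds\ge n\}$, observe $\mathbf 1_{[0,\tau_n]}\phi\in\mathcal I^2([0,T])$, define $\int_0^t\phi\,dW$ on $\{t\le\tau_n\}$ consistently via the truncated integrals, and check consistency on overlaps. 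The continuity of sample paths and the local martingale property in (i) are then inherited from the step-process level (where continuity is immediate because $t\mapsto W(t)$ is continuous and the integral is a finite sum of such terms times continuous increments, and the martingale property is the optional-stopping/tower-property computation) combined with the localization; that the process is Gaussian follows because for step processes the integral is a limit of Gaussian vectors (each $\phi_j\Delta_jW$ conditioned appropriately), hence Gaussian, and Gaussianity is preserved under the $L^2$-limits and localization.

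For part (ii), the BDG inequality, I would first prove it for step processes and for exponent $p=2$, where it reduces to a maximal inequality: $\{M_n\}$ is an $E$-valued martingale, and the M-type 2 property together with Doob's maximal inequality (valid for $E$-valued martingales in the scalar-exponent $L^2$ sense) controls $\mathbb E\sup_n\|M_n\|^2$ by $c(E)\sum_j\mathbb E\|\phi_j\Delta_jW\|^2$, which we already identified with $c(E)\,\mathbb E\int_0^T\|\phi\|_{\gamma(H;E)}^2\,ds$. For general $p>1$ the cleanest argument invokes the known vector-valued BDG inequality for continuous $E$-valued local martingales (M-type 2 guarantees a two-sided estimate; here only the upper bound is needed) applied to $t\mapsto\int_0^t\phi\,dW$, whose quadratic-variation-type bracket is dominated by $\int_0^t\|\phi(s)\|_{\gamma(H;E)}^2\,ds$; alternatively one can bootstrap from $p=2$ to general $p$ via a good-$\lambda$ inequality. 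In either case the constant $c_p(E)$ depends only on $p$ and $E$. Finally I would cite \cite{Brzezniak,Brzezniak1,van1,van2} for the parts that are by now standard, so that the proof in the paper need only highlight the M-type 2 step and the identification of the step-process $L^2$-norm with the $\gamma(H;E)$-norm.

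The main obstacle I anticipate is the careful identification $\mathbb E\|\phi_j(W(t_{j+1})-W(t_j))\|^2=(t_{j+1}-t_j)\,\mathbb E\|\phi_j\|_{\gamma(H;E)}^2$ in full rigor, since $W$ is only a \emph{cylindrical} Wiener process (living on the auxiliary space $H_1$), so the increment does not literally take values in $H$ and the action $\phi_j(W(t_{j+1})-W(t_j))$ must be defined through the $\gamma$-radonifying structure (Gaussian series) rather than naive composition; making this precise, and checking that the resulting object is independent of the choice of $H_1$ and orthonormal basis, is the delicate point. Everything downstream — density of step processes, localization, pathwise continuity, Gaussianity, and the BDG estimate — is then routine functional-analytic and martingale machinery.
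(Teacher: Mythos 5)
The paper does not actually prove Theorem \ref{T9}: it defers entirely to \cite{Brzezniak1,Brzezniak1.5,Martin}. Your outline reproduces the standard construction carried out in those references --- the M-type 2 inequality of Definition \ref{MType2BanachSpace} applied to the discrete martingale of partial sums, the identification $\mathbb E\|\phi_j(W(t_{j+1})-W(t_j))\|^2=(t_{j+1}-t_j)\,\mathbb E\|\phi_j\|_{\gamma(H;E)}^2$, density of adapted step processes in $L^2((0,T)\times\Omega;\gamma(H;E))$, localization via the stopping times $\tau_n$, and the vector-valued Doob/BDG machinery for part (ii) --- and those parts are sound. You are also right that the only genuinely delicate point there is making rigorous sense of $\phi_j(W(t_{j+1})-W(t_j))$ for a merely cylindrical $W$ through the $\gamma$-radonifying structure.

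There is, however, one genuine gap, in your treatment of the Gaussianity claim in (i). You argue that for a step process the integral is ``a limit of Gaussian vectors (each $\phi_j\Delta_j W$ conditioned appropriately), hence Gaussian.'' For a random, $\mathcal F_{t_j}$-measurable $\phi_j$ this fails: already in one dimension, if $X$ is a nonconstant $\mathcal F_{t_j}$-measurable random variable and $Z$ an independent standard normal, then $XZ$ is a normal variance mixture, not a normal random variable, and conditioning does not restore unconditional Gaussianity of the sum. In fact the Gaussianity assertion is simply not true for general adapted $\phi\in\mathcal I([0,T])$; it holds when $\phi$ is deterministic, in which case the step-process integral is a sum of independent Gaussian vectors and Gaussianity survives the $L^2$ limit and localization. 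That deterministic case is what the paper actually uses, since Theorem \ref{Th0} is applied to convolutions of the form $\int_0^t A^\theta S(t-s)G(s)\,dW(s)$ whose integrands are nonrandom under the pathwise hypotheses there. You should either restrict the Gaussian claim in (i) to deterministic integrands and prove it by the independent-increments argument, or supply a different justification; the one you sketch cannot be repaired as stated.
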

For the proof, see, e.g.,  \cite{Brzezniak1,Brzezniak1.5,Martin}.
\begin{lemma}  \label{T10}
Let $B$ be a closed linear operator in $E$ and $\phi\colon [0,T]\subset \mathbb R\to \gamma (H;E).$
  If $\phi$ and $B\phi$  belong to $  \mathcal I^2([0,T]), $   then
$$B\int_0^T \phi(t)dW(t)=\int_0^T B\phi(t)dW(t)  \hspace{1cm}  \text{  a.s.}$$
\end{lemma}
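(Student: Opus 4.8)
The plan is to reduce the general statement to a simple-function case, exploit that $B$ is closed, and then pass to the limit using the isometry-type estimate of Theorem \ref{T9}. First, I would observe that since $\phi$ and $B\phi$ both lie in $\mathcal I^2([0,T])$, both stochastic integrals $\int_0^T \phi(t)\,dW(t)$ and $\int_0^T B\phi(t)\,dW(t)$ are well defined as $L^2(\Omega;E)$-limits of integrals of adapted step (simple) processes. The key reduction is: for an adapted step process $\phi_n = \sum_{k} \mathbf{1}_{(t_{k-1},t_k]}\otimes \xi_k$ with $\xi_k$ being $\mathcal F_{t_{k-1}}$-measurable $\gamma(H;E)$-valued simple random variables taking values in $\mathcal D(B)$ (in the sense that $B\xi_k \in \gamma(H;E)$), the identity
$$B\int_0^T \phi_n(t)\,dW(t) = B\sum_k \xi_k\bigl(W(t_k)-W(t_{k-1})\bigr) = \sum_k (B\xi_k)\bigl(W(t_k)-W(t_{k-1})\bigr) = \int_0^T B\phi_n(t)\,dW(t)$$
holds trivially, using only linearity of $B$ on its domain (here $W(t_k)-W(t_{k-1})$ should be read via the pairing of $\gamma$-radonifying operators against increments of the cylindrical process, but the algebra is the same). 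So the lemma is clear for step processes whose values lie in $\mathcal D(B)$.

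Second, I would handle the approximation. Choose a sequence of adapted step processes $\phi_n$ such that $\phi_n \to \phi$ and $B\phi_n \to B\phi$ simultaneously in $L^2((0,T)\times\Omega;\gamma(H;E))$; this is possible because both $\phi$ and $B\phi$ are in $\mathcal I^2([0,T])$ and the step processes are dense — one can, for instance, start from an approximating sequence for the pair $(\phi, B\phi)$ viewed in $L^2((0,T)\times\Omega;\gamma(H;E)\times\gamma(H;E))$ and take the first coordinate, noting that the approximants can be chosen to have values in $\mathcal D(B)$ by using conditional expectations of $\phi$ (which commute with $B$ in the appropriate sense). Then by the estimate in Theorem \ref{T9},
$$\int_0^T \phi_n(t)\,dW(t) \longrightarrow \int_0^T \phi(t)\,dW(t) \quad \text{and} \quad \int_0^T B\phi_n(t)\,dW(t) \longrightarrow \int_0^T B\phi(t)\,dW(t)$$
in $L^2(\Omega;E)$, hence along a subsequence both convergences hold a.s. For that subsequence we have, a.s.,
$$B\int_0^T \phi_n(t)\,dW(t) = \int_0^T B\phi_n(t)\,dW(t) \longrightarrow \int_0^T B\phi(t)\,dW(t),$$
while $\int_0^T \phi_n(t)\,dW(t) \to \int_0^T \phi(t)\,dW(t)$. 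Since $B$ is closed, it follows that $\int_0^T \phi(t)\,dW(t) \in \mathcal D(B)$ and $B\int_0^T \phi(t)\,dW(t) = \int_0^T B\phi(t)\,dW(t)$ a.s., which is the claim.

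The main obstacle is the construction of the approximating step processes $\phi_n$ that converge to $\phi$ \emph{and} satisfy $B\phi_n \to B\phi$ while retaining values in $\mathcal D(B)$ and adaptedness: the naive dense approximation in $L^2((0,T)\times\Omega;\gamma(H;E))$ controls only $\phi_n \to \phi$ and gives no control on $B\phi_n$. The fix is to approximate the pair: the map $t \mapsto (\phi(t), B\phi(t))$ lies in $L^2((0,T)\times\Omega;\gamma(H;E)\oplus\gamma(H;E))$, and one approximates it by adapted step processes of the form $(\psi_n, \chi_n)$; because $B$ is closed, the conditional-expectation/averaging operations used to build $\psi_n$ from $\phi$ over the partition intervals commute with $B$ in the sense that they send $\chi_n$ to exactly $B\psi_n$ (the conditional expectation of a $\mathcal D(B)$-valued integrable function stays in $\mathcal D(B)$, with $B$ passing through — a standard fact for closed operators and Bochner integrals). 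This guarantees $\psi_n$ has values in $\mathcal D(B)$, $\psi_n \to \phi$ and $B\psi_n = \chi_n \to B\phi$ in $L^2((0,T)\times\Omega;\gamma(H;E))$, and the rest of the argument proceeds as above.
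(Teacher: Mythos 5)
Your proposal is correct and is essentially the classical argument the paper points to: the paper omits the proof of Lemma \ref{T10}, citing Da Prato--Zabczyk, and their proof is exactly your scheme --- verify the identity for adapted step processes with values in $\mathcal D(B)$, approximate the pair $(\phi,B\phi)$ simultaneously by such step processes (using that conditional expectations/averages commute with the closed operator $B$ on Bochner integrals), and pass to the limit in $L^2(\Omega;E)$ along an a.s.\ convergent subsequence, invoking closedness of $B$. You have also correctly identified and resolved the only delicate point, namely that the approximants must be chosen so that $B\phi_n\to B\phi$ as well, not merely $\phi_n\to\phi$.
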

The proof of Lemma \ref{T10} is very similar to one in Da Prato-Zabczyk  \cite{prato}. So, we omit it.

The Kolmogorov continuity theorem gives a sufficient condition for a stochastic process to be H\"{o}lder continuous.
\begin{theorem} \label{Th-1}
Let $\zeta$ be  an $E$-valued stochastic process on $[0,T].$ Assume that  for some  $c>0$ and $ \epsilon_i>0 \, (i=1,2),$ 
\begin{equation} \label{E11}
\mathbb E\|\zeta(t)-\zeta(s)\|^{\epsilon_1}\leq c |t-s|^{1+\epsilon_2}, \hspace{1cm}  0\leq s,t\leq T.
\end{equation}
Then, $\zeta$ has  a version whose $\mathbb P$-almost all trajectories are H\"{o}lder continuous functions with an  arbitrarily smaller  exponent than $\frac{\epsilon_2}{\epsilon_1}$.
\end{theorem}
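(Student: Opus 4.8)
The final statement is the Kolmogorov continuity theorem (Theorem~\ref{Th-1}), so the plan is to reproduce the classical dyadic‑argument proof, adapted to an $E$-valued process. I would work entirely with the deterministic time interval $[0,T]$, which after rescaling I may take to be $[0,1]$, and introduce the dyadic nets $D_n=\{k2^{-n}:0\le k\le 2^n\}$ and $D=\bigcup_n D_n$. Fix an exponent $\gamma<\epsilon_2/\epsilon_1$ that I want the trajectories to be H\"older with; set up auxiliary quantities such as $K_n=\max_{0\le k<2^n}\|\zeta((k+1)2^{-n})-\zeta(k2^{-n})\|$.

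First I would estimate the moments of $K_n$: from Chebyshev and hypothesis \eqref{E11},
$$\mathbb P\bigl(\|\zeta((k+1)2^{-n})-\zeta(k2^{-n})\|> 2^{-\gamma n}\bigr)\le c\,2^{\gamma\epsilon_1 n}2^{-(1+\epsilon_2)n},$$
and summing over the $2^n$ values of $k$ and then over $n$ gives a finite total provided $\gamma\epsilon_1-\epsilon_2<0$, i.e. $\gamma<\epsilon_2/\epsilon_1$, which is exactly our choice. By Borel--Cantelli there is an a.s.\ finite random index $N(\omega)$ such that $K_n\le 2^{-\gamma n}$ for all $n\ge N$. Next I would run the standard chaining step: for $s<t$ in $D$ with $2^{-(m+1)}<t-s\le 2^{-m}$, decompose the increment $\zeta(t)-\zeta(s)$ along a finite path through dyadic points of levels $\ge m$, using that each point of $D_n$ differs from a neighbour in $D_{n-1}$ by one dyadic edge of level $n$; the triangle inequality bounds $\|\zeta(t)-\zeta(s)\|$ by $2\sum_{n>m}K_n\le 2\sum_{n>m}2^{-\gamma n}\le C\,2^{-\gamma m}\le C'\,|t-s|^{\gamma}$ for all $n,m\ge N$. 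Hence $\zeta$ restricted to $D$ is, off a null set, $\gamma$-H\"older continuous and in particular uniformly continuous on the countable dense set $D$.

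Then I would define the version $\widetilde\zeta$ by extending $\zeta|_D$ continuously to all of $[0,T]$ (on the null set where the above fails, set $\widetilde\zeta\equiv 0$); completeness of $E$ guarantees the limit exists, and the H\"older bound passes to the closure, so a.e.\ trajectory of $\widetilde\zeta$ is $\gamma$-H\"older. Finally I must check $\widetilde\zeta$ is a version of $\zeta$, i.e.\ $\mathbb P(\widetilde\zeta(t)=\zeta(t))=1$ for each fixed $t$: pick $D\ni t_n\to t$, note $\widetilde\zeta(t_n)=\zeta(t_n)\to\widetilde\zeta(t)$ a.s.\ by construction, while $\zeta(t_n)\to\zeta(t)$ in $L^{\epsilon_1}$ (hence in probability) by \eqref{E11}, so the two limits agree a.s. The only genuinely delicate point is the chaining estimate — getting the dyadic decomposition of an arbitrary increment and the geometric summation of the $K_n$ right — together with the mild bookkeeping that the exponent can be taken arbitrarily close to, but strictly below, $\epsilon_2/\epsilon_1$; everything else is routine measure theory. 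Since this is a textbook result, I would likely just cite a standard reference (e.g. Da Prato--Zabczyk \cite{prato}) rather than write the full chaining argument, noting that the Banach-space case is identical to the scalar one once completeness of $E$ is invoked.
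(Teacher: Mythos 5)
Your proposal is the classical dyadic/chaining proof of the Kolmogorov continuity theorem, and it is correct (the Chebyshev--Borel--Cantelli step, the chaining bound, and the extension from the dyadic set to a genuine modification are all handled properly, modulo the routine bookkeeping you acknowledge). The paper does not prove this result itself but simply cites Da Prato--Zabczyk \cite{prato}, i.e.\ exactly the standard argument you reproduce, so your approach coincides with the paper's.
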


When  $\zeta$ is a Gaussian process, one can weaken the condition \eqref{E11}.
\begin{theorem}  \label{Th0}
Let $\zeta$ be an $E$-valued   Gaussian process on $[0,T]$ such that $\mathbb E \zeta(t)=0$ for  $ t\geq 0$.  Assume that  for some  $c>0$ and $ 0<\epsilon\leq 1,$
$$
\mathbb E\|\zeta(t)-\zeta(s)\|^2\leq c (t-s)^\epsilon, \hspace{1cm}  0\leq s\leq t\leq T.
$$
Then, there exists a modification of $\zeta$ whose $\mathbb P$-almost all trajectories are H\"{o}lder continuous functions with an  arbitrarily smaller  exponent than $\frac{\epsilon}{2}$.
\end{theorem}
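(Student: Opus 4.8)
The plan is to deduce the statement from the Kolmogorov continuity criterion (Theorem \ref{Th-1}) after promoting the second-moment estimate to an estimate on arbitrarily high moments, which is legitimate precisely because all moments of a Gaussian random variable are mutually comparable.

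First I would note that for each fixed pair $0\le s\le t\le T$ the increment $\zeta(t)-\zeta(s)$ is a centered $E$-valued Gaussian random variable: since $\zeta$ is Gaussian with $\mathbb E\zeta(r)=0$, the real random variable $\langle \zeta(t)-\zeta(s),x^*\rangle$ is centered Gaussian for every $x^*\in E^*$. Fernique's theorem then applies to $\zeta(t)-\zeta(s)$; in particular, for every $p\ge 2$ there is a constant $C_p$ depending only on $p$ (and neither on the law of the variable nor on $E$) such that $\mathbb E\|\eta\|^p\le C_p\big(\mathbb E\|\eta\|^2\big)^{p/2}$ for every centered $E$-valued Gaussian $\eta$. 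Combining this with the hypothesis gives, for all $p\ge 2$,
\[
\mathbb E\|\zeta(t)-\zeta(s)\|^p \le C_p\, c^{p/2}\,(t-s)^{p\epsilon/2}, \qquad 0\le s\le t\le T .
\]

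Next I would fix any $p>2/\epsilon$ (note $2/\epsilon\ge 2$ since $\epsilon\le 1$), so that $\epsilon_2:=\tfrac{p\epsilon}{2}-1>0$, and apply Theorem \ref{Th-1} with $\epsilon_1=p$ and this $\epsilon_2$. This yields a modification of $\zeta$ whose $\mathbb P$-almost all trajectories are H\"older continuous with any exponent strictly smaller than $\epsilon_2/\epsilon_1=\tfrac{\epsilon}{2}-\tfrac1p$. Finally, letting $p$ range over a sequence tending to infinity and observing that any two continuous modifications of $\zeta$ coincide a.s. (they agree a.s. at each rational point, hence, by continuity, on all of $[0,T]$ off a single null set), I obtain on one event of full measure a single modification of $\zeta$ that is H\"older continuous with every exponent $<\sup_p\big(\tfrac{\epsilon}{2}-\tfrac1p\big)=\tfrac{\epsilon}{2}$, which is the assertion.

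The only step that requires genuine care is the moment comparison: passing from the $L^2$ bound to $L^p$ bounds with a constant $C_p$ that does not depend on the covariance of $\zeta(t)-\zeta(s)$. This is exactly where Gaussianity is essential, and it rests on Fernique's theorem (equivalently Borell's inequality, or the Landau--Shepp estimate) for Gaussian measures on a Banach space, which gives $\mathbb E\exp(\lambda\|\eta\|^2)<\infty$ for small $\lambda>0$ and hence the uniform equivalence of all moments. Once this input is in place, everything else is routine bookkeeping of exponents in Kolmogorov's theorem together with the standard uniqueness-up-to-indistinguishability of continuous modifications.
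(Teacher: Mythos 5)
Your proof is correct, and it is essentially the standard argument: the paper itself gives no proof of this statement but defers to Da Prato--Zabczyk, where the result is obtained exactly as you do it, by using the equivalence of moments of Banach-space-valued Gaussian variables (Fernique's theorem) to upgrade the $L^2$ increment bound to $L^p$ for arbitrarily large $p$ and then invoking the Kolmogorov criterion of Theorem \ref{Th-1}. Your handling of the passage to a single modification that is H\"older continuous for every exponent below $\frac{\epsilon}{2}$ (via indistinguishability of continuous modifications along a sequence $p_n\to\infty$) is also sound.
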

For the proofs of Theorems \ref{Th-1} and \ref{Th0}, see, e.g.,  \cite{prato}.  

\subsection{Strict and mild   solutions}
Let us restate the problem  \eqref{E2}. Throughout this paper, we  consider \eqref{E2} in a complex separable Banach space $E$ of M-type 2, where
\begin{itemize}
  \item [\rm{(i)}] $A$ is a densely defined, closed linear operator  in $E$.
  \item [\rm{(ii)}]  $W$ is a  cylindrical Wiener process  on a separable Hilbert space $H,$ and is defined on a complete filtered  probability space  $(\Omega, \mathcal F,\mathcal F_t,\mathbb P).$
   \item [\rm{(iii)}] $F_1$ is  a measurable process  from $(\Omega_T, \mathcal P_T)$ to $(E,\mathcal B(E)).$
  \item  [\rm{(iv)}] $F_2$ is a measurable function from $(\Omega\times E, \mathcal F\times \mathcal B(E))$ to $(E,\mathcal B(E))$.
   \item  [\rm{(v)}] $G\in \mathcal I^2([0,T]),$ where $\mathcal I^2([0,T])$ is given by Definition \ref{Def11}.
  \item [\rm{(vi)}] $\xi$ is an $E$-valued $\mathcal F_0$-measurable random variable.
\end{itemize}  

\begin{definition}\label{Def2}
A adapted $E$-valued continuous process $X$ on $ [0,T]$ is called a strict solution of \eqref{E2} if  
$$\int_0^T \|F_2(X(s))\|ds <\infty  \hspace{1cm} \text{a.s.,} $$
$$X(t)\in \mathcal D(A) \quad \text{  and } \quad \Big \|\int_0^t AX(s) ds \Big \|<\infty  \hspace{1cm} \text{a.s., } \, 0<t\leq  T,$$
and  
\begin{align*}
X(t)=&\xi -\int_0^t AX(s)ds+\int_0^t [F_1(s)+F_2(X(s))]ds\\
&+ \int_0^t  G(s)dW(s) \hspace{1cm} \text{a.s., } \, 0<t\leq  T.
\end{align*}
\end{definition}

\begin{definition}\label{Def1}
A adapted $E$-valued continuous process $X$ on $ [0,T]$ is called a  mild solution  of \eqref{E2} if 
$$\int_0^T \|S(t-s)F_2(X(s))\| ds<\infty  \hspace{1cm} \text{a.s.,}$$
and  
\begin{align*}
 X(t)=&S(t)\xi +\int_0^tS(t-s) [F_1(s)+F_2(X(s))]ds\\
&+ \int_0^t S(t-s) G(s)dW(s) \hspace{1cm} \text{a.s., } \, 0<t\leq  T.\notag
\end{align*}
\end{definition}

 A strict (mild) solution $X$ on $[0,T]$ is said to be unique if any other strict (mild) solution $ \bar X$ on $[0,T]$ is indistinguishable from it, i.e. 
$$\mathbb P\{X(t)=\bar X(t) \text { for every } t\in [0,T]\}=1.$$

\begin{remark}
A strict solution is a mild solution (\cite{prato}). The inverse is however not true in general.
\end{remark}

\section{Linear evolution equations} \label{section4}
In this section, we  handle the linear case of the problem  \eqref{E2}, i.e.  $F_2\equiv 0$ in $E$.
Let us rewrite  \eqref{E2}  as 
\begin{equation} \label{linear}
\begin{cases}
dX+AXdt=F_1(t)dt+ G(t)dW(t), \hspace{1cm} 0<t\leq T, \\
X(0)=\xi.
\end{cases}
\end{equation}
First, we  study strict solutions to  \eqref{linear}, where $F_1$ and $G$ are taken from weighted H\"{o}lder continuous function spaces. This kind of  solutions is  constructed in Theorem \ref{Th2}  on the basis of solution formula. Maximal regularity of strict solutions  are shown in Theorem \ref{Th3}. Second, we give  maximal regularity of mild solutions to  \eqref{linear}  in Theorem \ref{Th4}.

Suppose that $F_1$ belongs to a weighted H\"{o}lder continuous function space:
\begin{itemize}
   \item [(F1)]   \hspace{0.3cm} For some $0<\sigma<\beta< \frac{1}{2},$  
    \begin{equation*} 
        F_1\in \mathcal F^{\beta, \sigma}((0,T];E)  \hspace{1cm} \text{ a.s. and }  \hspace{0.3cm} \mathbb E\|F_1\|_{\mathcal F^{\beta, \sigma}(E)}^2<\infty.
                                   \end{equation*}
                          \end{itemize}
And $G$ satisfies one of the   following conditions:
\begin{itemize}
   \item [(Ga)]    \hspace{0.3cm}  For some $1-\beta<\delta\leq 1$, 
       $$  
      A^\delta G\in \mathcal F^{\beta+\frac{1}{2}, \sigma} ((0,T];\gamma(H;E))  \hspace{1cm} \text{  a.s.} $$
and 
  $$ \mathbb E\|A^\delta G\|_{\mathcal F^{\beta+\frac{1}{2}, \sigma}(\gamma(H;E))}^2<\infty.  $$
                           \end{itemize}
\begin{itemize}
   \item [(Gb)]   \hspace{1cm}  
    $ 
       G\in \mathcal F^{\beta+\frac{1}{2}, \sigma} ((0,T];\gamma(H;E)) $  \hspace{1cm}  a.s. and   
    $$\mathbb E\|G\|_{\mathcal F^{\beta+\frac{1}{2}, \sigma}(\gamma(H;E))}^2<\infty.$$
              \end{itemize}
\begin{remark} 
The spatial regularity of $G$ in {\rm (Ga)} is essential for our construction of strict solutions. For existence of mild solutions, it is not necessary. In fact, for mild solutions, only temporal regularity, say {\rm (Gb)}, is required.
\end{remark}

\subsection{Strict solutions and  maximal regularity}
First, let us prove  uniqueness of strict solutions.
\begin{theorem}[Uniqueness]  \label{Th1}
Let $A$ satisfy  {\rm (Aa)} and  {\rm (Ab)}. If there exists a strict solution to  \eqref{linear},  then it is unique.
 \end{theorem}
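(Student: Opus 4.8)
The plan is to reduce the uniqueness question to a deterministic one by taking the difference of two strict solutions. Suppose $X$ and $\bar X$ are both strict solutions of \eqref{linear}. Since the equation is linear ($F_2\equiv 0$), the process $Y=X-\bar X$ is an adapted, $E$-valued continuous process satisfying $Y(0)=\xi-\xi=0$ a.s., $Y(t)\in\mathcal D(A)$ a.s. for $0<t\le T$, $\big\|\int_0^t AY(s)\,ds\big\|<\infty$ a.s., and the integral identity
\begin{equation*}
Y(t)=-\int_0^t AY(s)\,ds,\qquad 0<t\le T,\ \text{a.s.},
\end{equation*}
because the forcing terms $\int_0^t F_1(s)\,ds$ and the stochastic convolution $\int_0^t G(s)\,dW(s)$ are common to both solutions and cancel. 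Thus almost every trajectory of $Y$ solves the homogeneous deterministic Cauchy problem $Y'(t)+AY(t)=0$ on $(0,T]$ with $Y(0)=0$; by the standard uniqueness theory for the generator of an analytic semigroup (or directly: $\frac{d}{ds}\big(S(t-s)Y(s)\big)=0$ for $0<s<t$, which is justified since $Y(s)\in\mathcal D(A)$ and $S(t-s)$ commutes with $A$, giving $Y(t)=S(t-0^+)Y(0^+)$ and then $Y(t)=0$) one concludes $Y(t)=0$ a.s. for each fixed $t\in(0,T]$. Because $Y$ has continuous trajectories, the exceptional null set can be taken uniform over a countable dense set of times and then extended to all of $[0,T]$, yielding $\mathbb P\{X(t)=\bar X(t)\ \forall t\in[0,T]\}=1$, which is precisely the claimed indistinguishability.

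The one technical point requiring care is the passage from ``$Y(t)=-\int_0^t AY(s)\,ds$ for each $t$, a.s.'' to the assertion that a.e.\ sample path is a genuine (strict) solution of the deterministic equation, so that the deterministic uniqueness argument applies pathwise. Here I would invoke the measurability and integrability built into Definition~\ref{Def2}: for a.e.\ $\omega$ the map $s\mapsto AY(s,\omega)$ is integrable on $(0,t]$, hence $t\mapsto \int_0^t AY(s,\omega)\,ds$ is absolutely continuous, so $Y(\cdot,\omega)$ is absolutely continuous on $(0,T]$ with $Y'(s,\omega)=-AY(s,\omega)$ for a.e.\ $s$, and $Y(0^+,\omega)=0$. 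Applying $S(t-s)$ and differentiating in $s$ gives $\frac{d}{ds}S(t-s)Y(s,\omega)=S(t-s)[AY(s,\omega)+Y'(s,\omega)]=0$ for a.e.\ $s\in(0,t)$; integrating from $0^+$ to $t$ yields $Y(t,\omega)=0$.

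I expect the main obstacle to be purely bookkeeping rather than conceptual: making the null sets uniform in $t$ (handled by continuity of trajectories and separability of $[0,T]$) and justifying the differentiation of $s\mapsto S(t-s)Y(s)$ under only the a.e.-in-$s$ regularity coming from the integral formulation (handled by the fundamental theorem of calculus for absolutely continuous functions together with the smoothing and commutation properties of the analytic semigroup $S(t)=e^{-tA}$ recorded in the preliminaries). No stochastic analysis beyond the definition of a strict solution is needed, since the linearity makes the noise and the deterministic forcing cancel in the difference.
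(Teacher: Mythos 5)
Your proposal is correct and follows essentially the same route as the paper: form $Y=X-\bar X$, observe that $Y(t)=-\int_0^t AY(s)\,ds$ with $Y(0)=0$, show $\frac{d}{ds}S(t-s)Y(s)=0$, and conclude $Y\equiv 0$ pathwise, upgrading to indistinguishability by continuity. Your additional care about absolute continuity of trajectories and uniformizing the null sets is a welcome tightening of the same argument rather than a different approach.
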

\begin{proof}
Let $X$ and $\bar X$  be two strict solutions of  \eqref{linear} on $[0,T]$. Put $Y(t)=X(t)-\bar X(t), 0\leq t\leq T$. From the definition of strict solutions, we observe that 
$$
\begin{cases}
Y(t)=-\int_0^t AY(s)ds, \hspace{1cm} 0<t\leq T,\\
Y(0)=0.
\end{cases}
$$
Since $\frac{dY}{dr}=-AY(r)$ and $\frac{dS(t-r)}{dr}=AS(t-r)$ for  $0<r<t\leq T$, 
it is easily seen that 
$$
 \frac{d S(t-r)Y(r) }{dr}=0, \hspace{1cm} 0<r<t\leq T.
$$
This implies that $S(t-\cdot)Y(\cdot)$ is a constant on $(0,t)$. Hence, 
$$S(t-r)Y(r)=\lim_{r\to t} S(t-r)Y(r)=0,\hspace{1cm}  0<r<t\leq T.$$

Since $Y$ is continuous at $t=0$ and $Y(0)=0$, 
$$Y(r)=\lim_{t\to r}S(t-r)Y(r)=Y(r),  \hspace{1cm}  0<r\leq T.$$
Thus, $X=\bar X$ a.s.  on $[0,T]$. By the continuity, we conclude that $X$ and $\bar X$ are indistinguishable.
\end{proof}

Second, let us construct strict solutions on the basis of solution formula.
\begin{theorem}[Existence] \label{Th2}
Let {\rm (Aa)}, {\rm (Ab)},  {\rm (F1)} and {\rm (Ga)} be satisfied.
  Then, there exists a unique strict solution $X$ of \eqref{linear}.  Furthermore, $X$ possesses  the regularity
 $$AX\in \mathcal C((0,T];E) \hspace{1cm} \text{a.s.}$$
and satisfies the estimate
\begin{align}
\mathbb E & \|X(t)\|^2  + t^2 \mathbb E \|AX(t)\|^2 \label{H12.7}\\
 \leq &  C[\mathbb E \|\xi\|^2 +      \mathbb E\|F_1\|_{\mathcal F^{\beta,\sigma}(E)}^2   t^{2\beta}+\mathbb E\|A^\delta G\|_{\mathcal F^{\beta+\frac{1}{2},\sigma}(\gamma(H;E))}^2    \notag\\
& \times  \{      t^{2\beta}  +t^{2(\beta+\delta)}   \}    ], \hspace{1cm} 0\leq t\leq T,     \notag  
\end{align}
where $C>0$ is some constant depending only on the exponents.
\end{theorem}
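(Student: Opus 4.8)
The plan is to construct the candidate solution by the variation-of-constants formula
\[
X(t) = S(t)\xi + \int_0^t S(t-s)F_1(s)\,ds + \int_0^t S(t-s)G(s)\,dW(s), \qquad 0 \le t \le T,
\]
and then verify that this process is well-defined, $\mathcal D(A)$-valued for $t>0$, continuous (even $C^1$) on $(0,T]$, and satisfies the integral identity of Definition \ref{Def2}; uniqueness is already handled by Theorem \ref{Th1}. I would treat the three terms separately. The first term $S(t)\xi$ is classical: by \eqref{H10} one has $AS(t)\xi \in \mathcal C((0,T];E)$ a.s., and $\|S(t)\xi\| \le \iota_0 e^{-\nu t}\|\xi\|$, $\|AS(t)\xi\| \le \iota_1 t^{-1}\|\xi\|$, which feeds the $\mathbb E\|\xi\|^2$ contribution to \eqref{H12.7}. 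The second, deterministic convolution term is exactly the setting of Yagi's theory: since $F_1 \in \mathcal F^{\beta,\sigma}((0,T];E)$ a.s., the function $t\mapsto \int_0^t S(t-s)F_1(s)\,ds$ lies in $\mathcal C((0,T];\mathcal D(A)) \cap \mathcal C([0,T];E)$ a.s., with the estimate $\|A\int_0^t S(t-s)F_1(s)\,ds\| \le C\|F_1\|_{\mathcal F^{\beta,\sigma}(E)} t^{\beta-1}$ obtained by splitting the integral and using \eqref{H10}, \eqref{H12}, \eqref{H12.5}; squaring and taking expectations produces the $\mathbb E\|F_1\|^2_{\mathcal F^{\beta,\sigma}(E)} t^{2\beta}$ term. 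The genuinely new work is the stochastic convolution
\[
Z(t) = \int_0^t S(t-s)G(s)\,dW(s),
\]
and showing $AZ(t)$ is a.s. well-defined and continuous on $(0,T]$.

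For $Z$ I would argue as follows. Write $G(s) = A^{-\delta}\bigl(A^\delta G(s)\bigr)$ and set $\Phi(s) = A^\delta G(s)$, so by (Ga) we have $\Phi \in \mathcal F^{\beta+\frac12,\sigma}((0,T];\gamma(H;E))$ a.s. with $\mathbb E\|\Phi\|^2_{\mathcal F^{\beta+\frac12,\sigma}(\gamma(H;E))} < \infty$. Then formally $AZ(t) = \int_0^t A^{1-\delta}S(t-s)\Phi(s)\,dW(s)$, and by Lemma \ref{T2.12} together with \eqref{H10},
\[
\|A^{1-\delta}S(t-s)\Phi(s)\|_{\gamma(H;E)} \le \iota_{1-\delta}(t-s)^{-(1-\delta)}\|\Phi(s)\|_{\gamma(H;E)} \le \iota_{1-\delta}(t-s)^{-(1-\delta)}\|\Phi\|_{\mathcal F^{\beta+\frac12,\sigma}(\gamma(H;E))} s^{\beta-\frac12}.
\]
Since $\delta > 1-\beta$ gives $1-\delta < \beta < \frac12$, the time singularity is square-integrable: $\int_0^t (t-s)^{-2(1-\delta)} s^{2\beta-1}\,ds$ converges, in fact it equals $t^{2\beta + 2\delta - 2}B(2\beta, 2\delta-1)$ by the Beta integral. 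Hence $A^{1-\delta}S(t-\cdot)\Phi(\cdot) \in \mathcal I^2([0,t])$ a.s., so $AZ(t)$ is well-defined by Theorem \ref{T9}, $Z(t) \in \mathcal D(A)$ a.s., and (using that $A$ is closed, via Lemma \ref{T10} applied on $[0,t-\epsilon]$ and letting $\epsilon\to0$) $AZ(t) = \int_0^t A^{1-\delta}S(t-s)\Phi(s)\,dW(s)$. Theorem \ref{T9} then yields
\[
\mathbb E\|AZ(t)\|^2 \le c(E)\,\mathbb E\int_0^t \|A^{1-\delta}S(t-s)\Phi(s)\|_{\gamma(H;E)}^2\,ds \le C\,\mathbb E\|\Phi\|^2_{\mathcal F^{\beta+\frac12,\sigma}(\gamma(H;E))}\, t^{2(\beta+\delta)-2},
\]
so $t^2\mathbb E\|AZ(t)\|^2$ contributes the $t^{2(\beta+\delta)}$ term; an analogous estimate with $\theta=0$ in \eqref{H10} (using $\|S(t-s)\Phi(s)\|_{\gamma} \le \iota_0\|\Phi\|_{\mathcal F^{\beta+\frac12,\sigma}} s^{\beta-\frac12}$, integrable since $2\beta-1 > -1$) gives $\mathbb E\|Z(t)\|^2 \le C\,\mathbb E\|\Phi\|^2_{\mathcal F^{\beta+\frac12,\sigma}(\gamma(H;E))}\,t^{2\beta}$, the remaining term of \eqref{H12.7}.

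It remains to establish a.s. continuity of $AZ$ on $(0,T]$, which I expect to be the main technical obstacle. The idea is to fix $0 < a \le b \le T$, write for $a \le t < t' \le b$
\[
AZ(t') - AZ(t) = \int_0^t \bigl[A^{1-\delta}S(t'-s) - A^{1-\delta}S(t-s)\bigr]\Phi(s)\,dW(s) + \int_t^{t'} A^{1-\delta}S(t'-s)\Phi(s)\,dW(s),
\]
and estimate the $\gamma(H;E)$-norms of the integrands using \eqref{H11.5} (to get a factor $(t'-t)^\eta$ for small $\eta>0$ from $[S(t'-t)-I]$ composed with the smoothing of $S$) and \eqref{H10}, together with the Hölder bound \eqref{H12.5} on $\Phi$; since the resulting kernels are square-integrable on the compact away-from-zero interval, Theorem \ref{T9} gives $\mathbb E\|AZ(t')-AZ(t)\|^{2} \le C(a,b)(t'-t)^{2\eta}$, and then the higher-moment Gaussian estimate — raising to an even power via Theorem \ref{T9}(ii), or directly invoking Theorem \ref{Th0} since $AZ$ restricted to $[a,b]$ is a centered Gaussian process — yields a Hölder-continuous modification on $[a,b]$. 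Taking a countable sequence $a = a_k \downarrow 0$ and patching gives a.s. continuity of $AZ$ on $(0,T]$. Finally, to check that $X$ solves the integral equation of Definition \ref{Def2}, I would integrate $AX(s)$ over $[0,t]$, interchange the (stochastic and deterministic) integrals with $A$ using Lemma \ref{T10} and a Fubini argument, and recognize the telescoping that reproduces $\xi - \int_0^t AX(s)\,ds + \int_0^t F_1(s)\,ds + \int_0^t G(s)\,dW(s) = X(t)$; continuity of $X$ at $t=0$ with $X(0)=\xi$ follows from \eqref{H11.5} applied to each term. Uniqueness is immediate from Theorem \ref{Th1}.
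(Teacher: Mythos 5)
Your proposal is correct and follows essentially the same route as the paper: the same decomposition into $S(t)\xi$, the deterministic convolution, and the stochastic convolution $Z$; the same $\gamma(H;E)$-norm and Beta-function estimates showing $AZ(t)=\int_0^t A^{1-\delta}S(t-s)A^\delta G(s)\,dW(s)$ is well defined; the Gaussian Kolmogorov criterion for continuity of $AZ$; and the Fubini/closed-operator argument for the integral identity, with uniqueness deferred to Theorem \ref{Th1}. The only cosmetic differences are that the paper re-derives the $\mathcal C((0,T];\mathcal D(A))$-regularity of the deterministic convolution via Yosida approximations rather than citing Yagi's deterministic theory, and proves H\"older continuity of $AZ$ on all of $[0,T]$ with exponent up to $\beta+\delta-1$ rather than patching compact intervals away from zero.
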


\begin{proof}
Uniqueness of strict solutions has already been verified by Theorem \ref{Th1}. What we need is to construct a strict solution to   \eqref{linear} on the basis of solution formula. 
For this purpose, we divide the proof into several steps.

{\bf Step 1}. Put 
\begin{equation}\label{Pt9}
I_1(t)=S(t)\xi +\int_0^tS(t-s) F_1(s)ds.
\end{equation}
 Let us verify that 
$$I_1\in  \mathcal C((0,T];\mathcal D(A))  \cap  \mathcal C([0,T];E) $$ 
and $I_1$ satisfies the integral equation
\begin{equation}  \label{I1equation}
I_1(t)+\int_0^t AI_1(s)ds=\xi+ \int_0^t F_1(s)ds, \hspace{1cm} 0<t\leq T.  
\end{equation}

Let $A_n=A(1+\frac{A}{n})^{-1}, n=1,2,3,\dots,$  be the Yosida approximation of $A$. Then, $A_n$ satisfies {\rm (Aa)} and {\rm (Ab)} uniformly and generates an analytic semigroup $S_n(t)$ (see  \cite{yagi}). Furthermore, for any $0\leq \nu <\infty$, 
\begin{equation} \label{LimitOfAnnuSn}
\lim_{n\to\infty} A_n^\nu S_n(t)=A^\nu S(t)  \hspace{1cm}\text{  (limit in    }  L(E)).
\end{equation}
In addition,  there exists $\varsigma_\nu>0$ independent of $n$ such that 
\begin{equation} \label{EstimateIofAnnu}
\begin{aligned}
\begin{cases}
\|A_n^\nu S_n(t)\| \leq \varsigma_\nu t^{-\nu} &\hspace{1cm}\text { if } \nu>0, 0<t\leq T\\
\|A_n^\nu S_n(t)\| \leq \varsigma_\nu e^{-\varsigma_\nu t} &\hspace{1cm} \text { if } \nu=0, 0<t\leq T.
\end{cases}
\end{aligned}
\end{equation}

Consider the function
$$I_{1n}(t)=S_n(t)\xi +\int_0^tS_n(t-s) F_1(s)ds, \hspace{1cm} 0\leq t \leq T.$$
Due to \eqref{LimitOfAnnuSn} and \eqref{EstimateIofAnnu}, 
$$\lim_{n\to \infty} I_{1n}(t)=I_1(t), \hspace{1cm} 0\leq t\leq T. $$
  Since $A_n$ is bounded, we observe that
$$dI_{1n}=[-A_n I_{1n}+F_1(t)]dt, \hspace{1cm} 0<t\leq T.$$
From this equation,  for any $0<\epsilon \leq T,$
\begin{equation}\label{EquationOfI1n}
I_{1n}(t)=I_{1n}(\epsilon)+\int_\epsilon^t[F_1(s)-A_nI_{1n}(s)]ds, \hspace{1cm}   0<\epsilon\leq t\leq T.
\end{equation}

We   observe  convergence of $A_nI_{1n}$. We have
\begin{align} 
A_nI_{1n}(t) 
=&A_nS_n(t)\xi+\int_0^t A_nS_n(t-s)[F_1(s)-F_1(t)]ds \notag\\
&+ \int_0^t A_nS_n(t-s)ds F_1(t)\notag\\
=&A_nS_n(t)\xi+\int_0^t A_nS_n(t-s)[F_1(s)-F_1(t)]ds + [I-S_n(t)]F_1(t). \label{Eq36}
\end{align}
Thanks to  \eqref{H12} and \eqref{EstimateIofAnnu}, 
\begin{align} 
\|A_nI_{1n}(t)\|  
\leq &\varsigma_1 t^{-1} \|\xi\|+\int_0^t  \varsigma_1  \|F_1\|_{\mathcal F^{\beta,\sigma}(E)} (t-s)^{\sigma-1} s^{\beta-\sigma-1} ds\notag\\
&+(1+\varsigma_0 e^{-\varsigma_0 t}) \|F_1\|_{\mathcal F^{\beta,\sigma}(E)} t^{\beta-1}\notag\\
=&\varsigma_1 \|\xi\| t^{-1} + [1+\varsigma_1  B(\beta-\sigma, \sigma)   +\varsigma_0 e^{-\varsigma_0 t}         ]       \label{H12.6}\\
&\hspace{1cm}  \times \|F_1\|_{\mathcal F^{\beta,\sigma}(E)} t^{\beta-1}, \hspace{1cm} 0<t\leq T, \notag
\end{align}
where $B (\cdot,\cdot)$ is the beta function. 
Furthermore, due to \eqref{LimitOfAnnuSn} and \eqref{Eq36}, we have 
\begin{equation}  \label{H13.1}
\lim_{n\to\infty}A_nI_{1n}(t)=Y(t),
\end{equation}
where
$$Y(t)=AS(t)\xi+\int_0^t AS(t-s)[F_1(s)-F_1(t)]ds + [I-S(t)]F_1(t).$$

Let us verify  that the function $Y$ is continuous  on $(0,T]$. Take $0<t_0\leq T$. By using \eqref{H12} and \eqref{H10},  for  $t\geq t_0$, we have
\begin{align}
&\|Y(t)-Y(t_0)\|\notag\\
\leq & \|AS(t_0)[S(t-t_0)-I]\xi\| + \|[I-S(t)]F_1(t)-[I-S(t_0)]F_1(t_0)\|\notag \\
&+\Big\| \int_{t_0}^t AS(t-s)[F_1(s)-F_1(t)]ds\notag \\
&+\int_0^{t_0} AS(t-s)ds[F_1(t_0)-F_1(t)]\notag\\
&+\int_0^{t_0} S(t-t_0)AS(t_0-s)[F_1(s)-F_1(t_0)]ds\notag\\
&-\int_0^{t_0} AS(t_0-s)[F_1(s)-F_1(t_0)]ds\Big\|\notag\\
\leq & \iota_1 t_0^{-1}\|S(t-t_0)\xi-\xi\| + \|[I-S(t)]F_1(t)-[I-S(t_0)]F_1(t_0)\|\notag \\
&+ \int_{t_0}^t \|AS(t-s)\| \|F_1(s)-F_1(t)\|ds\notag \\
&+\|[S(t-t_0)-S(t)][F_1(t_0)-F_1(t)]\|\notag\\
&+\int_0^{t_0} \|[S(t-t_0)-I]AS(t_0-s)[F_1(s)-F_1(t_0)]\|ds\notag\\
\leq & \iota_1 t_0^{-1}\|S(t-t_0)\xi-\xi\| + \|[I-S(t)]F_1(t)-[I-S(t_0)]F_1(t_0)\| \notag\\
&+ \int_{t_0}^t \iota_1  \|F_1\|_{\mathcal F^{\beta, \sigma}(E)} (t-s)^{\sigma-1} s^{\beta-\sigma-1}ds\notag \\
&+\|S(t-t_0)-S(t)\| \|F_1(t_0)-F_1(t)\|\notag\\
&+\|S(t-t_0)-I\|\int_0^{t_0}  \iota_1\|F_1\|_{\mathcal F^{\beta, \sigma}(E)} (t_0-s)^{\sigma-1} s^{\beta-\sigma-1}ds\notag\\
\leq  & \iota_1 t_0^{-1}\|S(t-t_0)\xi-\xi\| + \|[I-S(t)]F_1(t)-[I-S(t_0)]F_1(t_0)\|\notag \\
&+ \varsigma_1  \|F_1\|_{\mathcal F^{\beta, \sigma}(E)}  t_0^{\beta-\sigma-1} \int_{t_0}^t(t-s)^{\sigma-1} ds\notag \\
&+\|S(t-t_0)-S(t)\| \|F_1(t_0)-F_1(t)\|\notag\\
&+  \iota_1\|F_1\|_{\mathcal F^{\beta, \sigma}(E)} B(\beta-\sigma,\sigma) t_0^{\beta-1} \|S(t-t_0)-I\|\notag\\
\leq  & \iota_1 t_0^{-1}\|S(t-t_0)\xi-\xi\| + \|[I-S(t)]F_1(t)-[I-S(t_0)]F_1(t_0)\| \label{Eq26}\\
&+ \frac{\iota_1  \|F_1\|_{\mathcal F^{\beta, \sigma}(E)}}{\sigma}  t_0^{\beta-\sigma-1} (t-t_0)^\sigma\notag \\
&+\|S(t-t_0)-S(t)\| \|F_1(t_0)-F_1(t)\|\notag\\
&+  \iota_1\|F_1\|_{\mathcal F^{\beta, \sigma}(E)} B(\beta-\sigma,\sigma) t_0^{\beta-1} \|S(t-t_0)-I\|.\notag
\end{align}
Thus, 
$$\lim_{t\searrow  t_0}Y(t)=Y(t_0).$$ 
Similarly, we obtain that 
$$\lim_{t\nearrow  t_0}Y(t)=Y(t_0).$$
 Hence, $Y$ is continuous at $t=t_0$ and therefore at every point in $(0,T]$.

By the above arguments, we conclude that
\begin{align*}
I_1(t)&=\lim_{n\to\infty} I_{1n}(t)=\lim_{n\to\infty} A_n^{-1} A_n I_{1n}(t) \notag\\
&=A^{-1}Y(t), \hspace{1cm} 0<t\leq T.     
\end{align*}
This shows that $I_1(t) \in \mathcal D(A)$ for  $0<t\leq T$ and 
\begin{align}
AI_1=Y\in \mathcal C((0,T];E).    \label{H13.2}
\end{align}

 Furthermore, since $Y$ is continuous on $(0,T],$ $I_1=A^{-1} Y$ is the same. The continuity of $I_1$ at $t=0$ can be seen as follows. By \eqref{H12} and  \eqref{H11}, 
\begin{align}
\int_0^t\|S(t-s) F_1(s)\|ds& \leq \int_0^t\|S(t-s)\| \|F_1(s)\|ds  \notag\\
&\leq  \iota_0 \|F_1\|_{\mathcal F^{\beta,\sigma}(E)} \int_0^t  s^{\beta-1}ds \notag\\
&\leq \frac{\iota_0 \|F_1\|_{\mathcal F^{\beta,\sigma}(E)} t^\beta}{\beta}.    \label{Eq0}
\end{align}
So, 
$$\lim_{t\to 0} \int_0^tS(t-s) F_1(s)ds=0.$$
This and \eqref{Pt9} imply  that 
$$\lim_{t\to 0} I_1(t)=\lim_{t\to 0} S(t)\xi=\xi=I_1(0).$$
Hence, 
$$I_1 \in \mathcal C([0,T];E).$$

It now remains to prove  \eqref{I1equation}. It is seen that  the integral $\int_0^t AI_1(s)ds$ exists. Indeed, by virtue of  \eqref{H12},  \eqref{H10} and \eqref{H11},  
\begin{align*}
\int_0^t \|[I-S(r)]F_1(r)\|dr &\leq \int_0^t \|I-S(r)\| \|F_1(r)\|dr \\
&\leq (1+\iota_0) \|F_1\|_{\mathcal F^{\beta,\sigma}(E)}\int_0^t r^{\beta-1}dr\\
&=\frac{(1+\iota_0) \|F_1\|_{\mathcal F^{\beta,\sigma}(E)} r^{\beta}}{\beta}, \hspace{1cm} 0\leq t\leq T,
\end{align*}
and
\begin{align*}
&\int_0^t \Big\|\int_0^s AS(s-r)[F_1(r)-F_1(s)]dr\Big\| ds\\
&\leq \int_0^t \int_0^s \|AS(s-r)\| \|F_1(r)-F_1(s)\|dr ds\\
&\leq \iota_1 \|F_1\|_{\mathcal F^{\beta, \sigma}(E)} \int_0^t \int_0^s   (s-r)^{\sigma-1} r^{\beta-\sigma-1}dr ds\\
&=\frac{\iota_1 \|F_1\|_{\mathcal F^{\beta, \sigma}(E)} B(\beta-\sigma, \sigma)  t^\beta}{\beta}, \hspace{1cm} 0\leq t\leq T.
\end{align*}
These  estimates show that the integrals $\int_0^t [I-S(r)]F_1(r)dr$ and $\int_0^t \int_0^s AS(s-r)[F_1(r)-F_1(s)]dr ds $ are finite  for  $0\leq t\leq T.$ The finiteness of the integral $\int_0^t AI_1(s)ds$,  $0\leq t\leq T,$ hence follows from the equality:
\begin{align*}
\int_0^t AI_1(s)ds=&\int_0^t Y(s)ds\\
=&\int_0^t AS(r)\xi dr+\int_0^t \int_0^s AS(s-r)[F_1(r)-F_1(s)]dr ds \\
&+\int_0^t [I-S(r)]F_1(r)dr\\
=&[I-S(t)]\xi+\int_0^t \int_0^s AS(s-r)[F_1(r)-F_1(s)]dr ds\\
& +\int_0^t [I-S(r)]F_1(r)dr.
\end{align*}

Thanks to \eqref{H12.6},   the Lebesgue dominate convergence theorem applied to \eqref{EquationOfI1n} provides that  
 \begin{equation} \label{Eq27}
I_1(t)=I_1(\epsilon)+\int_\epsilon^t[F_1(s)-AI_1(s)]ds, \hspace{1cm}  0<\epsilon\leq  t\leq T.
\end{equation}
On the other hand, \eqref{Pt9} and  \eqref{Eq0} give that
$$\lim_{\epsilon\to 0} I_1(\epsilon) =\xi.$$
 Letting $\epsilon \to 0$ in \eqref{Eq27},  we arrive at the equation \eqref{I1equation}.

{\bf Step 2}. Put 
\begin{equation}\label{Pt10}
I_2(t)= \int_0^t S(t-s) G(s)dW(s).
\end{equation}
Let us manifest  that  the stochastic convolution  $I_2$ is well-defined  on $[0,T]$. Indeed, 
by using \eqref{H12},  \eqref{H11}  and (Ga),  
\begin{align}
&\int_0^t  \mathbb E \|S(t-s)G(s)\|_{\gamma(H;E)}^2 ds   \notag\\
& \leq 
\int_0^t\|A^{-\delta}S(t-s)\|\mathbb E \|A^\delta G(s)\|_{\gamma(H;E)}^2 ds  \notag\\
&\leq  \int_0^t  \iota_0^2 \|A^{-\delta}\| \mathbb E\|A^\delta G\|_{\mathcal F^{\beta+\frac{1}{2},\sigma}(\gamma(H;E))}^2 s^{2\beta-1}ds    \notag\\
&=\frac{\iota_0^2 \|A^{-\delta}\| \mathbb E\|A^\delta G\|_{\mathcal F^{\beta+\frac{1}{2},\sigma}(\gamma(H;E))}^2 t^{2\beta}}{2\beta}<\infty, \hspace{1cm} 0\leq t\leq T.    \label{H13.3}
\end{align}
Hence,  $I_2$ is well-defined on $[0,T]$.

{\bf Step 3}. 
Let us observe that 
$I_2$  satisfies the equation
\begin{equation} \label{I2}
I_2(t)+ \int_0^t AI_2(s)ds=\int_0^t G(s)dW(s), \hspace{1.5cm}   0\leq t\leq T,
\end{equation}
and for any  $0<\gamma<\beta+\delta-1$,
$$AI_2\in \mathcal C^\gamma([0,T];E) \hspace{1cm} \text{ a.s.  } $$ 
In particular, $AI_2$ and $I_2$ belong to $\mathcal C([0,T];E)$ a.s.

First, we  prove \eqref{I2}.
It follows from \eqref{H10}  and (Ga) that
\begin{align}
&\int_0^t\mathbb E \|AS(t-s)G(s)\|_{\gamma(H;E)}^2 ds    \notag\\
&\leq \int_0^t\|A^{1-\delta}S(t-s)\|^2 \mathbb E \|A^\delta G(s)\|_{\gamma(H;E)}^2 ds   \notag\\
& \leq   \int_0^t  \iota_{1-\delta}^2 (t-s)^{2(\delta-1)} \mathbb E \|A^\delta G\|_{\mathcal F^{\beta+\frac{1}{2}, \sigma}(\gamma(H;E))}^2 s^{2\beta-1}ds   \notag\\
&=\iota_{1-\delta}^2 \mathbb E \|A^\delta G\|_{\mathcal F^{\beta+\frac{1}{2}, \sigma}(\gamma(H;E))}^2  B(2\beta,2\delta-1)  t^{2(\beta+\delta-1)}     \label{H13.4}   \\
&<\infty, \hspace{4cm} 0\leq t\leq T,      \notag
\end{align}
and  
\begin{align*}
&\int_0^t\mathbb E \|G(s)\|_{\gamma(H;E)}^2 ds    \notag\\
&\leq \int_0^t\|A^{-\delta}\|^2 \mathbb E \|A^\delta G(s)\|_{\gamma(H;E)}^2 ds    \notag\\
& \leq  \|A^{-\delta}\|^2 \int_0^t  \mathbb E \|A^\delta G\|_{\mathcal F^{\beta+\frac{1}{2}, \sigma}(\gamma(H;E))}^2 s^{2\beta-1}ds    \notag\\
&=\frac{\|A^{-\delta}\|^2 \mathbb E \|A^\delta G\|_{\mathcal F^{\beta+\frac{1}{2}, \sigma}(\gamma(H;E))}^2    t^{2\beta}}{2\beta}<\infty, \hspace{1cm} 0\leq t\leq T.   
\end{align*}
The stochastic integrals $\int_0^t AS(t-s)G(s)dW(s)$ and $\int_0^t G(s)dW(s)$ are therefore  well-defined 
on $[0,T]$. 
 Lemma \ref{T10} then provides that 
$$AI_2(t)=\int_0^t AS(t-s)G(s)dW(s), \hspace{1cm} 0\leq t\leq T.$$
Using the Fubini theorem, we have
\begin{equation*}
\begin{aligned}
A \int_0^t I_2(s)ds&=\int_0^t \int_0^s AS(s-u) G(u)dW(u)ds\\
&=\int_0^t \int_u^t AS(s-u) G(u)dsdW(u)\\
&=\int_0^t [G(u)-S(t-u)G(u)]dW(u)\\
&=\int_0^t G(u)dW(u)-\int_0^t S(t-u)G(u)dW(u)\\
&=\int_0^t G(u)dW(u)-I_2(t),  \hspace{1cm}  0\leq t \leq T.
\end{aligned}
\end{equation*}
This means that  $I_2$ satisfies \eqref{I2}. 

Next, we prove the H\"{o}lder continuity of $AI_2$ on $[0,T]$
 by using 
 the expression: 
\begin{align*}
AI_2(t)&-AI_2(s)=\int_s^t AS(t-r)G(r)dW(r) \\
&+\int_0^s A[S(t-r)-S(s-r)]G(r)dW(r), \hspace{1cm}   0\leq s<t\leq T.
\end{align*}
Since the integrals in the right-hand side of the latter equality are independent and have  zero expectation,  
\begin{align*}
\mathbb E& \|AI_2(t)-AI_2(s)\|^2  \notag\\
=&\mathbb E \Big\|\int_s^t A^{1-\delta}S(t-r)A^\delta G(r)dW(r)\Big\|^2 \notag \\
     &+\mathbb E\Big\|\int_0^s A^{1-\delta}[S(t-r)-S(s-r)]A^\delta G(r)dW(r)\Big\|^2  \notag\\
\leq & c(E) \int_s^t \mathbb E\|A^{1-\delta} S(t-r)A^\delta G(r)\|_{\gamma(H;E)}^2dr \notag \\
    &+c(E) \int_0^s \mathbb E\|A^{1-\delta} [S(t-r)-S(s-r)]A^\delta G(r)\|_{\gamma(H;E)}^2dr  \notag\\
\leq & c(E) \int_s^t  \|A^{1-\delta} S(t-r)\|^2  \mathbb E\|A^\delta G(r)\|_{\gamma(H;E)}^2 dr\notag\\
&+ c(E) \int_0^s  \|A^{1-\delta}[S(t-r)-S(s-r)]\|^2 \\
& \hspace{1.5cm} \times \mathbb E\|A^\delta G(r)\|_{\gamma(H;E)}^2dr, \hspace{1cm}   0\leq s<t\leq T.  \notag
\end{align*}
Then,   \eqref {H12},  \eqref{H10} and  \eqref{H11} give 
\begin{align*}
\mathbb E& \|AI_2(t)-AI_2(s)\|^2  \notag\\
      \leq & \iota_{1-\delta}^2 \mathbb E\|A^\delta G\|_{\mathcal F^{\beta+\frac{1}{2},\sigma}(\gamma(H;E))}^2  \int_s^t (t-r)^{2(\delta-1)}r^{2\beta-1}dr\notag\\
        &+ \mathbb E\|A^\delta G\|_{\mathcal F^{\beta+\frac{1}{2},\sigma}(\gamma(H;E))}^2 \int_0^s\Big\|\int_{s-r}^{t-r} A^{2-\delta} S(u) du\Big\|^2 r^{2\beta-1}dr   \notag\\
\leq &\iota_{1-\delta}^2 \mathbb E\|A^\delta G\|_{\mathcal F^{\beta+\frac{1}{2},\sigma}(\gamma(H;E))}^2  \int_s^t (t-r)^{2(\delta-1)}(r-s)^{2\beta-1}dr\notag\\
     &  +\iota_{2-\delta}^2 \mathbb E\|A^\delta G\|_{\mathcal F^{\beta+\frac{1}{2},\sigma}(\gamma(H;E))}^2 \int_0^s \Big(\int_{s-r}^{t-r} u^{\delta-2}du\Big)^2 r^{2\beta-1}dr  \notag \\
= &\iota_{1-\delta}^2 \mathbb E\|A^\delta G\|_{\mathcal F^{\beta+\frac{1}{2},\sigma}(\gamma(H;E))}^2  \int_s^t (t-r)^{2(\delta-1)}(r-s)^{2\beta-1}dr\notag\\
     &  +\iota_{2-\delta}^2 \mathbb E\|A^\delta G\|_{\mathcal F^{\beta+\frac{1}{2},\sigma}(\gamma(H;E))}^2 \int_0^s \Big(\int_{s-r}^{t-r} u^{\delta-2}du\Big)^2 r^{2\beta-1}dr  \notag  \\
     = &\iota_{1-\delta}^2 \mathbb E\|A^\delta G\|_{\mathcal F^{\beta+\frac{1}{2},\sigma}(\gamma(H;E))}^2 B(2\beta,2\delta-1)  (t-s)^{2(\beta+\delta-1)}\notag\\
     &  +\iota_{2-\delta}^2 \mathbb E\|A^\delta G\|_{\mathcal F^{\beta+\frac{1}{2},\sigma}(\gamma(H;E))}^2 \int_0^s \Big(\int_{s-r}^{t-r} u^{\delta-2}du\Big)^2\\
& \hspace{1.5cm} \times r^{2\beta-1}dr, \hspace{1cm}   0\leq s<t\leq T.  \notag
\end{align*}
By dividing $\delta-2$ as $\delta-2=-\beta+\beta+\delta-2$, 
\begin{align*}
\Big(\int_{s-r}^{t-r} u^{\delta-2}du\Big)^2&\leq    \Big[\int_{s-r}^{t-r} (s-r)^{-\beta}u^{\beta+\delta-2}du\Big]^2\notag\\
&= (s-r)^{-2\beta}\frac{[(t-r)^{\beta+\delta-1}-(s-r)^{\beta+\delta-1}]^2}{(\beta+\delta-1)^2}\notag\\
&\leq (s-r)^{-2\beta}\frac{(t-s)^{2(\beta+\delta-1)}}{(\beta+\delta-1)^2}. 
\end{align*}
Hence, 
\begin{align}
\mathbb E & \|AI_2(t)-AI_2(s)\|^2  \notag\\
    \leq &\iota_{1-\delta}^2 \mathbb E\|A^\delta G\|_{\mathcal F^{\beta+\frac{1}{2},\sigma}(\gamma(H;E))}^2 B(2\beta,2\delta-1)  (t-s)^{2(\beta+\delta-1)}\notag\\
     &  +\frac{\iota_{2-\delta}^2 \mathbb E\|A^\delta G\|_{\mathcal F^{\beta+\frac{1}{2},\sigma}(\gamma(H;E))}^2 }{(\beta+\delta-1)^2}     \int_0^s (s-r)^{-2\beta} r^{2\beta-1}dr   \notag  \\
& \times  (t-s)^{2(\beta+\delta-1)}  \notag \\
    = &\Big[\iota_{1-\delta}^2 \mathbb E\|A^\delta G\|_{\mathcal F^{\beta+\frac{1}{2},\sigma}(\gamma(H;E))}^2 B(2\beta,2\delta-1)    \label{E100} \\
     &  +\frac{\iota_{2-\delta}^2 \mathbb E\|A^\delta G\|_{\mathcal F^{\beta+\frac{1}{2},\sigma}(\gamma(H;E))}^2 B(2\beta, 1-2\beta)}{(\beta+\delta-1)^2}\Big]  \notag\\
& \times (t-s)^{2(\beta+\delta-1)}, \hspace{1cm} 0\leq s<t\leq T.   \notag
\end{align}

On the other hand,  Theorem \ref{T9} provides that  $ AI_2$ is a Gaussian process on $[0,T]$.   Thanks to \eqref{E100}, Theorem \ref{Th0} applied to the Gaussian process  $ AI_2$ gives  that for any $0<\gamma< \beta+\delta-1$,
$$AI_2\in \mathcal C^\gamma([0,T];E) \hspace{1cm} \text{ a.s.  } $$ 
As a consequence,
$AI_2$ and then $I_2=A^{-1} AI_2$ belong to $\mathcal C([0,T];E)$  a.s.

{\bf Step 4}. Let us prove that
\begin{align}
\mathbb E&\|I_1(t)\|^2 +  \mathbb E\|I_2(t)\|^2    \label{H13.5} \\
 \leq  &  2\iota_0^2 e^{-2\nu t} \mathbb E \|\xi\|^2 + \frac{\iota_0^2 \mathbb E\|F_1\|_{\mathcal F^{\beta,\sigma}(E)}^2 t^{2\beta}}{\beta^2}       \notag  \\
&  +\frac{c(E) \iota_0^2 \|A^{-\delta}\|^2 \mathbb E\|A^\delta G\|_{\mathcal F^{\beta+\frac{1}{2},\sigma}(\gamma(H;E))}^2 t^{2\beta}}{2\beta},    \hspace{1cm} 0\leq t\leq T.  \notag
\end{align}
Thanks to \eqref{H11} and  \eqref{Eq0}, 
\begin{align*}
\|I_1(t)\| \leq & \|S(t)\xi\| + \int_0^t\|S(t-s) F_1(s)\|ds   \\
\leq &  \iota_0 e^{-\nu t} \|\xi\| + \frac{\iota_0 \|F_1\|_{\mathcal F^{\beta,\sigma}(E)} t^\beta}{\beta}, \hspace{1cm} 0\leq t\leq T.
\end{align*}
Hence,
\begin{align*}
\mathbb E \|I_1(t)\|^2 \leq 
  2\iota_0^2 e^{-2\nu t} \mathbb E \|\xi\|^2 + \frac{2\iota_0^2 \mathbb E\|F_1\|_{\mathcal F^{\beta,\sigma}(E)}^2 t^{2\beta}}{\beta^2}, \hspace{1.5cm} 0\leq t\leq T.
\end{align*}

On the other hand, by \eqref{H13.3}, 
\begin{align*}
\mathbb E \|I_2(t)\|^2 & \leq  c(E) \int_0^t\|S(t-s)G(s)\|_{\gamma(H;E)}^2 ds \\
& \leq  \frac{c(E) \iota_0^2 \|A^{-\delta}\|^2 \mathbb E\|A^\delta G\|_{\mathcal F^{\beta+\frac{1}{2},\sigma}(\gamma(H;E))}^2 t^{2\beta}}{2\beta},
   \hspace{1.5cm} 0\leq t\leq T.
\end{align*}
Combining the above inequalities yields \eqref{H13.5}.

{\bf Step 5}. Let us prove that
\begin{align}
\mathbb E&\|AI_1(t)\|^2 +  \mathbb E\|AI_2(t)\|^2    \label{H13.6} \\
 \leq  &  2\varsigma_1^2 \mathbb E \|\xi\|^2 t^{-2}   \notag\\
& + 2[1+\varsigma_1  B(\beta-\sigma, \sigma)   +\varsigma_0 e^{-\varsigma_0 t} ]^2   \mathbb E \|F_1\|_{\mathcal F^{\beta,\sigma}(E)}^2 t^{2(\beta-1)}       \notag  \\
&  +\iota_{1-\delta}^2 \mathbb E \|A^\delta G\|_{\mathcal F^{\beta+\frac{1}{2}, \sigma}(\gamma(H;E))}^2  B(2\beta,2\delta-1)  t^{2(\beta+\delta-1)},    \hspace{0.5cm} 0< t\leq T.  \notag
\end{align}
On account of    \eqref{H12.6},  \eqref{H13.1} and   \eqref{H13.2}, it is easily seen that
\begin{align*}
\|AI_1(t)\| \leq & \varsigma_1 \|\xi\| t^{-1} + [1+\varsigma_1  B(\beta-\sigma, \sigma)   +\varsigma_0 e^{-\varsigma_0 t} ]  \\
& \hspace{1cm} \times \|F_1\|_{\mathcal F^{\beta,\sigma}(E)} t^{\beta-1}, \hspace{1cm} 0< t\leq T.
\end{align*}
Therefore,
\begin{align*}
\mathbb E \|AI_1(t)\|^2 \leq & 2\varsigma_1^2 \mathbb E \|\xi\|^2 t^{-2} + 2[1+\varsigma_1  B(\beta-\sigma, \sigma)   +\varsigma_0 e^{-\varsigma_0 t} ]^2  \\
& \hspace{1cm} \times \mathbb E \|F_1\|_{\mathcal F^{\beta,\sigma}(E)}^2 t^{2(\beta-1)}, \hspace{1cm} 0< t\leq T.
\end{align*}

On the other hand,  \eqref{H13.4} gives
\begin{align*}
&\mathbb E \|AI_2(t)\|^2 \\
& \leq  c(E) \int_0^t\mathbb E \|AS(t-s)G(s)\|_{\gamma(H;E)}^2 ds  \\
& \leq  \iota_{1-\delta}^2 \mathbb E \|A^\delta G\|_{\mathcal F^{\beta+\frac{1}{2}, \sigma}(\gamma(H;E))}^2  B(2\beta,2\delta-1)  t^{2(\beta+\delta-1)},
   \hspace{1cm} 0\leq t\leq T.
\end{align*}
Combining the above inequalities, it derives \eqref{H13.6}.

Based on  Steps 1 and  3, we conclude that  $X=I_1+I_2$ is a strict solution of \eqref{linear} possessing the regularity 
$$AX\in \mathcal C((0,T];E) \hspace{1cm} \text{a.s.}$$
 In addition,  it is easily seen that \eqref{H12.7} follows from Step 4 and  Step 5. This completes the proof.
\end{proof}

Finally, let us show the maximal regularity  of strict solutions for more regular initial value, say $\xi \in \mathcal D(A^\beta)$.
\begin{theorem}   \label{Th3} 
Let {\rm (Aa)}, {\rm (Ab)},  {\rm (F1)} and {\rm (Ga)} be satisfied.
 Assume that   $\xi \in \mathcal D(A^\beta)$ a.s.  Then,  the strict solution $X$ of \eqref{linear} has the space-time regularity
$$
 X \in \mathcal C([0,T];\mathcal D(A^\beta)) \cap \mathcal C^{\gamma_1}([0,T];E)
\hspace{1cm} \text{a.s.,}
$$
$$AX \in \mathcal C^{\gamma_2}([\epsilon,T];E)    \hspace{1cm} \text{a.s.}$$
for any  $ 0<\gamma_1<\beta$, $ 0<\gamma_2<\beta+\delta-1, 0<\gamma_2\leq \sigma $ and $0<\epsilon \leq T.$ 
In addition,  $X$ satisfies the estimate
\begin{align}
\mathbb E \|A^\beta X(t)\|^2 
 \leq  &C[e^{-2\nu t}\mathbb E \|A^\beta \xi\|^2 +  \mathbb E \|F_1\|_{\mathcal F^{\beta,\sigma}(E)}^2     \label{H10.5}\\
& \quad +   \mathbb E \|A^\delta G\|_{\mathcal F^{\beta+\frac{1}{2},\sigma}(\gamma(H;E))}^2  t^{2\beta}], \hspace{1.5cm} 0\leq t\leq T  \notag
\end{align}
with some $C>0$ depending only on the exponents.
\end{theorem}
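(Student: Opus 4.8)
The strategy is to decompose the strict solution as $X = I_1 + I_2$ with $I_1, I_2$ as in \eqref{Pt9} and \eqref{Pt10}, exactly as in Theorem \ref{Th2}, and to upgrade each piece to the stronger regularity afforded by the hypothesis $\xi \in \mathcal D(A^\beta)$. For $I_1$, the plan is to write $A^\beta I_1(t) = A^\beta S(t)\xi + \int_0^t A^\beta S(t-s)F_1(s)\,ds$; the first term lies in $\mathcal C([0,T];E)$ with $\|A^\beta S(t)\xi\| \leq \iota_0 e^{-\nu t}\|A^\beta\xi\|$ by \eqref{H11} applied to $A^\beta\xi$, and the convolution term is handled by \eqref{H10} with $\theta = \beta$, giving $\|A^\beta S(t-s)F_1(s)\| \leq \iota_\beta (t-s)^{-\beta}\|F_1\|_{\mathcal F^{\beta,\sigma}(E)} s^{\beta-1}$, which is integrable (beta function $B(\beta,1-\beta)$) and tends to $0$ as $t\to 0$. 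This yields $I_1 \in \mathcal C([0,T];\mathcal D(A^\beta))$ together with the pointwise bound $\|A^\beta I_1(t)\| \leq \iota_0 e^{-\nu t}\|A^\beta\xi\| + C\|F_1\|_{\mathcal F^{\beta,\sigma}(E)} t^{\beta}$, whose square contributes the first two terms of \eqref{H10.5}. For $I_2$, since $\delta > 1-\beta$ we have $\beta + \delta - 1 > 0$, so $\beta < \beta + \delta$; one notes $A^\beta I_2(t) = \int_0^t A^{\beta-\delta}S(t-s)\,A^\delta G(s)\,dW(s)$ (justified by Lemma \ref{T10}, as in Step 3 of Theorem \ref{Th2}), and applying Theorem \ref{T9} with \eqref{H10} at $\theta = \delta - \beta < 1$ gives $\mathbb E\|A^\beta I_2(t)\|^2 \leq c(E)\iota_{\delta-\beta}^2 \mathbb E\|A^\delta G\|_{\mathcal F^{\beta+1/2,\sigma}(\gamma(H;E))}^2 \int_0^t (t-s)^{2(\beta-\delta)} s^{2\beta-1}\,ds$, finite provided $2(\beta-\delta) > -1$; this is the content I must check — it holds iff $\delta - \beta < 1/2$, i.e. $\delta < \beta + 1/2$, which is exactly guaranteed by $\delta \leq 1$ and $\beta > 1/2 \cdots$ wait, that is not automatic, so here the regularity index $\beta + \tfrac12$ on $G$ must be used: one instead splits $A^\delta G$ using its Hölder-in-time bound \eqref{H12.5} after subtracting $A^\delta G(t)$, exactly mimicking \eqref{Eq36}, so that the singular kernel $(t-s)^{-(\delta-\beta)}$ is paired with $(t-s)^\sigma s^{\beta+1/2-\sigma-1}$ and the remainder term $[I - S(t)]$-type factor. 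This is the step I expect to be the main obstacle: making the stochastic-convolution estimate for $A^\beta I_2$ close with the correct power $t^{2\beta}$ in \eqref{H10.5} rather than a mismatched exponent, which forces one to exploit the extra $\tfrac12$ in the weight of $G$ and the cancellation trick.

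Next, having $X \in \mathcal C([0,T];\mathcal D(A^\beta))$ a.s.\ and the $L^2$ bound \eqref{H10.5}, the plan is to derive the temporal Hölder regularity $X \in \mathcal C^{\gamma_1}([0,T];E)$ for $\gamma_1 < \beta$. For $I_1$ one writes $I_1(t) - I_1(s) = [S(t-s) - I]S(s)\xi + [S(t-s)-I]\int_0^s S(s-r)F_1(r)\,dr + \int_s^t S(t-r)F_1(r)\,dr$ and estimates the first two terms by \eqref{H11.5} with $\theta = \beta$ (using that $S(s)\xi \in \mathcal D(A^\beta)$ and the already-established bound on $A^\beta I_1(s)$), obtaining a factor $(t-s)^\beta$, while the last term is bounded by $\int_s^t \iota_0 \|F_1\|_{\mathcal F^{\beta,\sigma}(E)} r^{\beta-1}\,dr \leq C(t-s)^\beta$ when $s$ is bounded away from $0$, and near $s = 0$ a direct estimate gives $(t-s)^\beta$ up to constants as well; hence $I_1 \in \mathcal C^{\gamma_1}([0,T];E)$ for every $\gamma_1 < \beta$ (the strict inequality absorbs the boundary cases). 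For $I_2$, from the Gaussian estimate \eqref{E100} already proved in Theorem \ref{Th2}, namely $\mathbb E\|AI_2(t) - AI_2(s)\|^2 \leq C(t-s)^{2(\beta+\delta-1)}$, one gets by interpolation between $AI_2 \in \mathcal C^{\beta+\delta-1}$ and the trivial bound, or more directly by the same BDG computation at level $A^\beta$ instead of $A^1$, that $\mathbb E\|A^\beta I_2(t) - A^\beta I_2(s)\|^2 \leq C(t-s)^{\epsilon}$ with $\epsilon$ that can be taken close to $2\beta$; Theorem \ref{Th0} then yields $A^\beta I_2 \in \mathcal C^{\gamma_1}$ for $\gamma_1 < \beta$, a fortiori $I_2 \in \mathcal C^{\gamma_1}([0,T];E)$. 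Combining, $X \in \mathcal C^{\gamma_1}([0,T];E)$ a.s.

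Finally, for the interior regularity of $AX$ on $[\epsilon, T]$: $AI_2 \in \mathcal C^{\gamma_2}([0,T];E)$ for $\gamma_2 < \beta + \delta - 1$ is already established in Step 3 of Theorem \ref{Th2}, so it suffices to treat $AI_1$ on $[\epsilon,T]$. Here I would revisit the estimate \eqref{Eq26} from the proof of Theorem \ref{Th2}: on the compact subinterval $[\epsilon,T]$, every occurrence of a negative power $t_0^{\beta-\sigma-1}$ or $t_0^{\beta-1}$ is bounded by a constant depending on $\epsilon$, the term $\iota_1 t_0^{-1}\|S(t-t_0)\xi - \xi\|$ becomes $\leq \iota_1 \epsilon^{-1}\|[S(t-t_0) - I]A^{-\beta}\|\,\|A^\beta\xi\| \leq C_\epsilon (t-t_0)^\beta \|A^\beta\xi\|$ by \eqref{H11.5}, the term $\|[I - S(t)]F_1(t) - [I - S(t_0)]F_1(t_0)\|$ is split as $\|[S(t_0) - S(t)]F_1(t_0)\| + \|[I - S(t)][F_1(t) - F_1(t_0)]\|$ and bounded by $C_\epsilon(t-t_0)^\sigma$ using \eqref{H11.5} (with $\theta$ small), the Hölder bound \eqref{H12.5}, and \eqref{H11}, and the remaining three terms were already shown in \eqref{Eq26} to be $O((t-t_0)^\sigma)$ with $\epsilon$-dependent constants; the operator-norm factors $\|S(t-t_0) - S(t)\|$ and $\|S(t-t_0) - I\|$ are each $O((t-t_0)^{\gamma_2})$ for any $\gamma_2 \leq 1$ via \eqref{H11.5}. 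Collecting exponents, $AI_1$ is Hölder on $[\epsilon,T]$ with any exponent $\gamma_2 \leq \sigma$ (and $<1$), so $AX = AI_1 + AI_2 \in \mathcal C^{\gamma_2}([\epsilon,T];E)$ for $\gamma_2 < \beta + \delta - 1$ and $\gamma_2 \leq \sigma$, as claimed. The inequality \eqref{H10.5} then follows by collecting the pointwise bounds on $\|A^\beta I_1(t)\|^2$ and the moment bound on $\mathbb E\|A^\beta I_2(t)\|^2$ and using $(a+b)^2 \leq 2a^2 + 2b^2$, absorbing all $\iota$- and $B$-constants into a single $C$. The genuinely delicate point, as flagged, is the $A^\beta$-level stochastic convolution estimate; everything else is a careful but routine re-run of the deterministic arguments of Theorem \ref{Th2} with the exponent $1$ replaced by $\beta$ and with compactness of $[\epsilon,T]$ exploited to kill the boundary singularities.
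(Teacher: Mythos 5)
Your overall decomposition $X=I_1+I_2$ and the division of labor (continuity of $A^\beta X$, H\"older continuity of $X$, interior H\"older continuity of $AX$, then the moment bound) match the paper, but three of your key steps do not go through as written. First, the continuity of $A^\beta I_1$ at $t=0$: the bound $\|A^\beta S(t-s)F_1(s)\|\le \iota_\beta (t-s)^{-\beta}\|F_1\|_{\mathcal F^{\beta,\sigma}(E)}s^{\beta-1}$ yields $\bigl\|\int_0^t A^\beta S(t-s)F_1(s)\,ds\bigr\|\le \iota_\beta B(\beta,1-\beta)\|F_1\|_{\mathcal F^{\beta,\sigma}(E)}$, a constant independent of $t$ (substitute $s=tu$), so it does \emph{not} tend to $0$ as $t\to 0$, contrary to your claim. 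The paper has to subtract $F_1(t)$ inside the integral, use property \eqref{Fbetasigma3} (that $w_{F_1}(t)\to 0$) to kill $\int_0^t A^\beta S(t-s)[F_1(s)-F_1(t)]ds$, and then treat $[I-S(t)]A^{\beta-1}F_1(t)$ via the limit $z=\lim_{t\to0}t^{1-\beta}F_1(t)$ from \eqref{H11.2} together with a density argument in $\mathcal D(A^\beta)$. Without this, $X\in\mathcal C([0,T];\mathcal D(A^\beta))$ is not established. Second, the step you flag as the ``main obstacle'' for $A^\beta I_2$ is a non-issue, and your proposed repair is not legitimate: since $\beta<\tfrac12$ and $\delta>1-\beta$ force $\delta>\beta$, the operator $A^{\beta-\delta}$ is \emph{bounded}, so one simply writes $\mathbb E\|A^\beta I_2(t)\|^2\le c(E)\|A^{\beta-\delta}\|^2\iota_0^2\,\mathbb E\|A^\delta G\|^2_{\mathcal F^{\beta+\frac12,\sigma}(\gamma(H;E))}\int_0^t s^{2\beta-1}ds$, which gives exactly the $t^{2\beta}$ in \eqref{H10.5}; no singular kernel ever appears. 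By contrast, your cancellation trick of subtracting $A^\delta G(t)$ inside the stochastic convolution, mimicking \eqref{Eq36}, produces the integrand $s\mapsto A^\beta S(t-s)G(t)$, which is not adapted (it involves $\mathcal F_t$-measurable data at times $s<t$), so the corresponding It\^o integral is not defined and the trick does not transfer from the deterministic to the stochastic term.

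Third, your treatment of $AI_1$ on $[\epsilon,T]$ relies on the assertion that $\|S(t-t_0)-S(t)\|$ and $\|S(t-t_0)-I\|$ are $O((t-t_0)^{\gamma_2})$ ``via \eqref{H11.5}''; but \eqref{H11.5} bounds $\|[S(h)-I]A^{-\theta}\|$, not $\|S(h)-I\|$, and for an unbounded generator $\|S(h)-I\|$ does not even tend to $0$ in operator norm. The paper's Step 4 circumvents this by the $J_1,\dots,J_5$ decomposition: the $[S(t-s)-I]$ factors are always paired either with $A^{-\sigma}$ (as in $J_3$, using $\|[S(t-s)-I]A^{-\sigma}\|\,\|A^\sigma S(s)\|$) or rewritten as $\int_0^{t-s}AS(u)\,du$ acting on a term already containing $AS(s-r)$, producing $A^2S(s+u-r)$ (as in $J_5$); moreover $J_1$ uses that $AS(\cdot)\xi\in\mathcal F^{\beta,\sigma}((0,T];E)$ by \eqref{Eq53}, which is where the hypothesis $\xi\in\mathcal D(A^\beta)$ actually enters this step. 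Your remaining items (the H\"older continuity of $I_1$ via the integral equation, of $I_2$ via the Gaussian estimate and Theorem \ref{Th0}, and the assembly of \eqref{H10.5}) are in line with the paper, but as it stands the proposal does not close the three gaps above.
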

\begin{proof}
We divide the proof into several steps.

{\bf Step 1}. Let us show that  $A^\beta X=A^\beta I_1+A^\beta I_2 \in \mathcal C([0,T];E) $  a.s., where  $I_1$ and $ I_2$ are defined by \eqref{Pt9} and \eqref{Pt10}.
Indeed, Step 1 and  Step 3 of the proof of Theorem \ref{Th2} give that
$$A^\beta I_1=A^{\beta-1}  AI_1 \in  \mathcal C((0,T];E)   \hspace{1cm} \text{a.s.,}$$
and 
$$A^\beta I_2=A^{\beta-1}  AI_2 \in  \mathcal C([0,T];E)   \hspace{1cm} \text{a.s.}$$
It suffices to verify the continuity of $A^\beta I_1$ at $t=0$.

 Since $S(\cdot)$ is strongly continuous,  
$$\lim_{t\to 0}\|A^\beta S(t)\xi-A^\beta \xi\|=\lim_{t\to 0}\|[S(t)-I]A^\beta \xi\|=0.$$
Therefore, $A^\beta S(\cdot)\xi$ is continuous at $t=0$.

 In the meantime, by the property of the space $\mathcal F^{\beta,\sigma}((0,T];E),$  we may put $z=\lim_{t\to 0} t^{1-\beta}F_1(t)$.  Then, 
\begin{align*}
\Big\|A^\beta & \int_0^t S(t-s)F_1(s)ds\Big\|\notag\\
=&\Big\|\int_0^t A^\beta S(t-s)[F_1(s)-F_1(t)]ds\Big\|+\Big\|\int_0^t A^\beta S(t-s)F_1(t)ds\Big\|\notag\\
=&\Big\|\int_0^t A^\beta S(t-s)[F_1(s)-F_1(t)]ds\Big\|+\Big\|[I-S(t)]A^{\beta-1}F_1(t)\Big\|\notag\\
\leq &\int_0^t \|A^\beta S(t-s)\| \|F_1(t)-F_1(s)\|ds\notag\\
&+\| t^{\beta-1}[I-S(t)]A^{\beta-1} [t^{1-\beta}F_1(t)-z]\|\notag\\
&+\| t^{\beta-1}[I-S(t)]A^{\beta-1} z\|.\notag
\end{align*}
Thereby,  \eqref{Fbetasigma3},  \eqref{H10} and  \eqref{H11.5} imply that 
\begin{align*}
\limsup_{t\to 0}&\Big\|A^\beta\int_0^t S(t-s)F_1(s)ds\Big\|\notag\\
\leq & \iota_\beta \limsup_{t\to 0}\int_0^t  (t-s)^{-\beta}\|F_1(t)-F_1(s)\|ds\notag\\
&+\frac{\iota_\beta}{1-\beta}\limsup_{t\to 0}\|t^{1-\beta}F_1(t)-z\|\notag\\
&+\limsup_{t\to 0}\| t^{\beta-1}[I-S(t)]A^{\beta-1} z\|\notag\\
= & \iota_\beta \limsup_{t\to 0}\int_0^t (t-s)^{\sigma-\beta}s^{-1+\beta-\sigma}  \frac{ s^{1-\beta+\sigma}\|F_1(t)-F_1(s)\|}{(t-s)^\sigma}ds\notag\\
&+\frac{\iota_\beta}{1-\beta}\limsup_{t\to 0}\|t^{1-\beta}F_1(t)-z\|\notag\\
&+\limsup_{t\to 0}\| t^{\beta-1}[I-S(t)]A^{\beta-1} z\|\notag\\
\leq &\iota_\beta B(\beta-\sigma, 1-\beta+\sigma)\limsup_{t\to 0} \sup_{s\in[0,t)}\frac{ s^{1-\beta+\sigma}\|F_1(t)-F_1(s)\|}{(t-s)^\sigma}\notag\\
&+\limsup_{t\to 0}\| t^{\beta-1}[I-S(t)]A^{\beta-1} z\|\notag\\
=&\limsup_{t\to 0}\| t^{\beta-1}[I-S(t)]A^{\beta-1} z\|. \notag
\end{align*}
Since $\mathcal D(A^\beta)$ is dense in $H$, there exists a sequence $\{z_n\}_{n=1}^\infty$  in $\mathcal D(A^\beta)$ that converges to $z$ as $n\to \infty.$ Hence, by using \eqref{H11.5}, 
\begin{align*}
\limsup_{t\to 0}&\Big\|A^\beta\int_0^t S(t-s)F_1(s)ds\Big\|\\
\leq&\limsup_{t\to 0}  \|t^{\beta-1}[I-S(t)]A^{\beta-1} (z-z_n)\|\\
&+\limsup_{t\to 0}  \|t^{\beta-1}[I-S(t)]A^{-1}A^{\beta} z_n\|\\
\leq &\frac{\iota_\beta}{1-\beta}\|z-z_n\|+\iota_0 \limsup_{t\to 0}  t^\beta \|A^{\beta} z_n\|\\
= & \frac{\iota_\beta}{1-\beta}\|z-z_n\|, \hspace{1cm} n=1,2,\dots
\end{align*}
Letting $n$ to $\infty$, we arrive at
 $$\lim_{t\to 0}A^\beta\int_0^t S(t-s)F_1(s)ds=0.$$
 This means that  $A^\beta\int_0^\cdot S(\cdot-s)F_1(s)ds$ is (right) continuous at $t=0$.
 Because   
$$A^\beta I_1(t)=A^\beta S(t)\xi+A^\beta\int_0^t S(t-s)F_1(s)ds,$$
we conclude  that $A^\beta I_1$  is continuous at $t=0$.

{\bf Step 2}. Let us show that 
$$I_1\in \mathcal C^\beta([0,T];E) \hspace{1cm} \text{ a.s.}$$
Let $0\leq s<t\leq T$.
From \eqref{H10}, \eqref{H11}, \eqref{Pt9} and \eqref{I1equation}, 
\begin{align}
&\|I_1(t)-I_1(s)\|\notag\\
=&\Big\|\int_s^t F_1(u)du-\int_s^t AI_1(u) du\Big\|  \notag \\ 
=&\Big\|\int_s^t F_1(u)du-\int_s^t AS(u)\xi  du-\int_s^t \int_0^uAS(u-r)F_1(r)drdu\Big\|  \notag \\ 
\leq& \Big\|\int_s^t [F_1(u)-AS(u)\xi]  du\Big\|+\int_s^t  \Big\|\int_0^uAS(u-r)F_1(u)dr\Big\|du  \notag  \\
&+\int_s^t  \Big\|\int_0^uAS(u-r)[F_1(u)-F_1(r)]dr\Big\|du   \notag\\ 
\leq & \Big\|\int_s^t [F_1(u)-AS(u)\xi]  du\Big\|+\int_s^t  \|[I-S(u)]F_1(u)\|du  \label{Eq2} \\
&+\int_s^t  \int_0^u\|AS(u-r)\| \|F_1(u)-F_1(r)\|drdu  \notag \\ 
\leq & \int_s^t [\|F_1(u)-AS(u)\xi  \|+(1+\iota_0)\|F_1(u)\|]du  \notag \\
&+\iota_1\int_s^t  \int_0^u(u-r)^{-1} \|F_1(u)-F_1(r)\|drdu  \notag \\ 
=&I_{11}(s,t)+I_{12}(s,t).  \label{Eq28} 
\end{align}

We estimate   $I_{11}$ and $I_{12}$ as follows. 
Since $\xi\in \mathcal D(A^\beta)$ a.s.,  
$$AS(\cdot)\xi \in \mathcal F^{\beta,\sigma}((0,T];E) \hspace{1cm} \text{ a.s.} \hspace{1cm}  (\text{see } \eqref{Eq53}).$$
So,  
$$F_1-AS\xi \in \mathcal F^{\beta,\sigma}((0,T];E) \hspace{1cm} \text{ a.s.}$$ 
By \eqref {H12},  
\begin{align}
I_{11}(s,t)&\leq \int_s^t [\|F_1-AS\xi\|_ {\mathcal F^{\beta,\sigma}(E)}   u^{\beta-1}+\|F_1\|_{\mathcal F^{\beta,\sigma}(E)}(1+\iota_0) u^{\beta-1}]du    \notag\\
&=\frac{\|F_1-AS\xi\|_{\mathcal F^{\beta,\sigma}(E)} + \|F_1\|_ {\mathcal F^{\beta,\sigma}(E)}(1+\iota_0)}{\beta}    (t^\beta-s^\beta)    \notag\\
&\leq \frac{\|F_1-AS\xi\|_ {\mathcal F^{\beta,\sigma}(E)} + \|F_1\|_ {\mathcal F^{\beta,\sigma}(E)}(1+\iota_0)}{\beta} (t-s)^\beta \hspace{1cm} \text{ a.s.}    \label{H16.5}
\end{align}

In the meantime,  \eqref {H12} gives
\begin{align}
I_{12}(s,t)=&\iota_1\int_s^t  \int_0^u(u-r)^{\sigma-1}  r^{\beta-1-\sigma}\frac{r^{1-\beta+\sigma}\|F_1(u)-F_1(r)\|}{(u-r)^\sigma}drdu \notag\\
\leq & \iota_1\|F_1\|_{\mathcal F^{\beta,\sigma}(E)}\int_s^t  \int_0^u(u-r)^{\sigma-1}  r^{\beta-\sigma-1}drdu\notag\\
= & \iota_1\|F_1\|_{\mathcal F^{\beta,\sigma}(E)}\int_s^t  u^{\beta-1}\int_0^1(1-v)^{\sigma-1}  v^{\beta-\sigma-1}dvdu\notag\\
= & \iota_1\|F_1\|_{\mathcal F^{\beta,\sigma}(E)} B(\beta-\sigma,\sigma)\int_s^t  u^{\beta-1}du\notag\\
= & \frac{\iota_1\|F_1\|_{\mathcal F^{\beta,\sigma}(E)} B(\beta-\sigma,\sigma)}{\beta} (t^\beta-s^\beta)\notag\\
 \leq& \frac{\iota_1\|F_1\|_{\mathcal F^{\beta,\sigma}(E)} B(\beta-\sigma,\sigma)}{\beta} (t-s)^\beta \hspace{1cm} \text{ a.s.}  \label{Eq29}
\end{align}
In view of  \eqref{Eq28}, \eqref{H16.5} and \eqref{Eq29}, we conclude that $I_1$ is $\beta$\,{-}\,H\"older continuous on $[0,T]$.

{\bf Step 3}. Let us verify that for any $0<\gamma_1<\beta$, 
$$I_2\in \mathcal C^{\gamma_1}([0,T];E) \hspace{1cm} \text{ a.s.  } $$
We notice the expression 
\begin{align*}I_2(t)-I_2(s)=&\int_s^t S(t-r)G(r)dW(r) \\
& +\int_0^s [S(t-r)-S(s-r)]G(r)dW(r), \hspace{1cm} 0\leq s<t\leq T.
\end{align*}
Since the integrals in the right-hand side of the latter equality are independent and have  zero expectation,  
\begin{align*}
\mathbb E& \|I_2(t)-I_2(s)\|^2  \notag\\
%
=&\mathbb E \Big\|\int_s^t S(t-r)G(r)dW(r)\Big\|^2 \notag\\
&   +\mathbb E\Big\|\int_0^s [S(t-r)-S(s-r)]G(r)dW(r)\Big\|^2  \notag\\
\leq & c(E) \int_s^t \mathbb E\|S(t-r)A^{-\delta} A^\delta G(r)\|_{\gamma(H;E)}^2dr \notag\\
&   +c(E) \int_0^s \mathbb E\|A^{-\delta} [S(t-r)-S(s-r)] A^\delta G(r)\|_{\gamma(H;E)}^2dr  \notag\\
%
\leq & c(E) \int_s^t  \|S(t-r)\|^2 \|A^{-\delta}\|^2 \mathbb E \|A^\delta G(r)\|_{\gamma(H;E)}^2 dr\notag\\
&+ c(E) \int_0^s  \|A^{-\delta} [S(t-r)-S(s-r)]\|^2 \mathbb E \|A^\delta G(r)\|_{\gamma(H;E)}^2dr.  \notag
\end{align*}
By \eqref {H12}, \eqref{H10},  \eqref{H11} and {\rm (Ga)},  
\begin{align*}
\mathbb E& \|I_2(t)-I_2(s)\|^2  \notag\\
%
\leq & \iota_0^2 c(E)    \|A^{-\delta}\|^2 \mathbb E \|A^\delta G\|_{\mathcal F^{\beta+\frac{1}{2},\sigma}(\gamma(H;E))}^2 
 \int_s^t r^{2\beta-1}dr  \\
& +c(E) \mathbb E \|A^\delta G\|_{\mathcal F^{\beta+\frac{1}{2},\sigma}(\gamma(H;E))}^2 \int_0^s\Big\|\int_{s-r}^{t-r} A^{1-\delta}S(u) du\Big\|^2 r^{2\beta-1}dr   \notag\\
%
\leq &  \frac {\iota_0^2 c(E)    \|A^{-\delta}\|^2 \mathbb E \|A^\delta G\|_{\mathcal F^{\beta+\frac{1}{2},\sigma}(\gamma(H;E))}^2     
 (t^{2\beta}-s^{2\beta})}{2\beta}\\
& +\iota_{1-\delta}^2 c(E) \mathbb E \|A^\delta G\|_{\mathcal F^{\beta+\frac{1}{2},\sigma}(\gamma(H;E))}^2    \int_0^s \Big(\int_{s-r}^{t-r} u^{\delta-1}du\Big)^2 r^{2\beta-1}dr.  \notag
\end{align*}
Because 
\begin{align*}
\Big(\int_{s-r}^{t-r} u^{\delta-1}du\Big)^2&= \frac{[(t-r)^\delta-(s-r)^\delta]^2}{\delta^2}\notag\\
&\leq \frac{(t-s)^{2\delta}}{\delta^2},  
\end{align*}
we obtain that for $0\leq s<t\leq T$,
\begin{align}
\mathbb E& \|I_2(t)-I_2(s)\|^2  \notag\\
\leq & \frac {\iota_0^2 c(E)    \|A^{-\delta}\|^2 \mathbb E \|A^\delta G\|_{\mathcal F^{\beta+\frac{1}{2},\sigma}(\gamma(H;E))}^2     
 (t^{2\beta}-s^{2\beta})}{2\beta}      \notag\\
& +   \frac{\iota_{1-\delta}^2 c(E) \mathbb E \|A^\delta G\|_{\mathcal F^{\beta+\frac{1}{2},\sigma}(\gamma(H;E))}^2     (t-s)^{2\delta}}{\delta^2}    \int_0^s r^{2\beta-1}dr  \Big ]  \notag\\
%
= &  \frac {\iota_0^2 c(E)    \|A^{-\delta}\|^2 \mathbb E \|A^\delta G\|_{\mathcal F^{\beta+\frac{1}{2},\sigma}(\gamma(H;E))}^2     
 (t^{2\beta}-s^{2\beta})}{2\beta}    
                      \label{ExpextationOfI2tI2sSquare}\\
& +   \frac{\iota_{1-\delta}^2 c(E) \mathbb E \|A^\delta G\|_{\mathcal F^{\beta+\frac{1}{2},\sigma}(\gamma(H;E))}^2  s^{2\beta}   (t-s)^{2\delta}}{2\beta \delta^2}.\notag
\end{align}

On the other hand,  Theorem \ref{T9} gives that  $  I_2$ is a Gaussian process on $[0,T]$. Thanks to  \eqref{ExpextationOfI2tI2sSquare} and a fact that $\delta>\beta$, Theorem \ref{Th0}  applied to the Gaussian process $I_2$ provides  that for any $0<\gamma_1<\beta$,
$$I_2\in \mathcal C^{\gamma_1}([0,T];E)  \hspace{1cm} \text{ a.s.  } $$

{\bf Step 4}. Let us verify that for any $ 0<\epsilon<T,$ $ 0<\gamma_2<\beta+\delta-1$ and $ 0<\gamma_2\leq \sigma, $
$$AX=AI_1+AI_2 \in \mathcal C^{\gamma_2}([\epsilon,T];E) \hspace{1cm} \text{a.s.}$$
We already proved in  Step 3 of the proof of Theorem \ref{Th2} that  for any   $  0<\gamma<\beta+\delta-1$, 
$$AI_2\in \mathcal C^\gamma([0,T];E) \hspace{1cm} \text{
a.s.}$$ 
It suffices to show that  for any $0<\epsilon<T,$
$$AI_1\in \mathcal C^\sigma([\epsilon,T];E) \hspace{1cm} \text{
a.s.}  $$

We use the techniques presented in  \cite{yagi}. From the expression
\begin{align*}
AI_1(t)&=AS(t)\xi+\int_0^t AS(t-r)F_1(t)dt+ \int_0^t AS(t-r)[F_1(r)-F_1(t)]dr \\
&=AS(t)\xi+[I-S(t)]F_1(t)+ \int_0^t AS(t-r)[F_1(r)-F_1(t)]dr,
\end{align*}
it follows that  
\begin{align*}
A&I_1(t)-AI_1(s)\\
=& A[S(t)-S(s)]  \xi \\
     &+ [I-S(t)][F_1(t)-F_1(s)]-[S(t-s)-I]S(s)F_1(s)\\
     &+ \int_s^t AS(t-r)[F_1(r)-F_1(t)]dr\\
     &+ [S(t-s)-I]  \int_0^s AS(s-r)[F_1(r)-F_1(s)]dr\\
     &+ \int_s^t AS(t-r)[F_1(s)-F_1(t)]dr\\
=& A[S(t)-S(s)]  \xi \\
     &+ [S(t-s)-S(t)][F_1(t)-F_1(s)]-[S(t-s)-I]S(s)F_1(s)\\
     &+ \int_s^t AS(t-r)[F_1(r)-F_1(t)]dr\\
     &+ [S(t-s)-I]  \int_0^s AS(s-r)[F_1(r)-F_1(s)]dr, \hspace{1cm} 0<s<t\leq T,
\end{align*}
here we used the equality 
$$\int_s^t AS(t-r)[F_1(s)-F_1(t)]dr=[I-S(t-s)][F_1(s)-F_1(t)].$$
Hence,
\begin{align*}
\|A&I_1(t)-AI_1(s)\|\\
\leq & \|AS(t)\xi-AS(s)  \xi\| \\
     &+ \|[S(t-s)-S(t)][F_1(t)-F_1(s)]\|  + \|[S(t-s)-I]S(s)F_1(s)\|\\
     &+ \int_s^t \|AS(t-r)\|  \|F_1(r)-F_1(t)\|dr\\
     &+ \Big\| [S(t-s)-I]  \int_0^s AS(s-r)[F_1(r)-F_1(s)]dr  \Big\|\\
=& J_1+J_2+J_3+J_4+J_5, \hspace{1cm} 0<s<t\leq T.
\end{align*}
We  give estimates for   $J_i$ $(i=1,\dots,5)$ as follows.

{\it For $J_1$.} By \eqref{Eq53}, $AS(\cdot) \xi \in \mathcal F^{\beta,\sigma}((0,T];E).$ Due to \eqref{H12.5},   
\begin{align*}
 J_1 & \leq \|AS(\cdot) \xi \|_{\mathcal F^{\beta, \sigma}(E)} (t-s)^{\sigma} s^{\beta-\sigma-1}. 
\end{align*}

{\it For $J_2$.}  It follows from   \eqref{H12.5} and \eqref{H11} that 
\begin{align*}
 J_2 & \leq 2\iota_0 \|F_1\|_{\mathcal F^{\beta, \sigma}(E)} (t-s)^{\sigma} s^{\beta-\sigma-1}. 
\end{align*}

{\it For $J_3$.}  Thanks to \eqref{H12}, \eqref{H10} and \eqref{H11.5},
\begin{align*}
 J_3 & \leq \|[S(t-s)-I]A^{-\sigma}\| \|A^\sigma  S(s)\| \|F_1(s)\|  \\
  & \leq   \frac{\iota_{1-\sigma}\iota_\sigma}  {\sigma}        \|F_1\|_{\mathcal F^{\beta, \sigma}(E)}    (t-s)^\sigma   s^{\beta-\sigma-1}. 
\end{align*}

{\it For $J_4$.} By \eqref{H12.5} and  \eqref{H10}, 
\begin{align*}
 J_4 & \leq \iota_1       \|F_1\|_{\mathcal F^{\beta, \sigma}(E)}  \int_s^t (t-r)^{\sigma-1} r^{\beta-\sigma-1} dr\\
& \leq \iota_1       \|F_1\|_{\mathcal F^{\beta, \sigma}(E)}  s^{\beta-\sigma-1}  \int_s^t (t-r)^{\sigma-1} dr\\
&=  \frac{\iota_1       \|F_1\|_{\mathcal F^{\beta, \sigma}(E)}}  {\sigma}   (t-s)^\sigma   s^{\beta-\sigma-1}. 
\end{align*}

{\it For $J_5$.} Due to \eqref{H12.5} and  \eqref{H10}, 
\begin{align*}
 J_5 = & \Big\|  \int_0^{t-s} AS(u)du \int_0^s AS(s-r)[F_1(r)-F_1(s)]dr  \Big\| \\
 = & \Big\|  \int_0^{t-s}  \int_0^s A^2 S(s+u-r)[F_1(r)-F_1(s)]dr  du \Big\|  \\
 \leq  &  \int_0^{t-s}  \int_0^s \|A^2 S(s+u-r)\| \|F_1(r)-F_1(s)\|dr  du   \\
 \leq  &  \iota_2 \|F_1\|_{\mathcal F^{\beta, \sigma}(E)} \int_0^{t-s}  \int_0^s   (s+u-r)^{-2}    (s-r)^\sigma   r^{\beta-\sigma-1} dr  du   \\
 = &   \iota_2 \|F_1\|_{\mathcal F^{\beta, \sigma}(E)}    (t-s)   \int_0^s  (t-r)^{-1} (s-r)^{\sigma-1}      r^{\beta-\sigma-1} dr     \\
 = &   \iota_2 \|F_1\|_{\mathcal F^{\beta, \sigma}(E)}    (t-s)   \int_0^s  (t-s+r)^{-1} r^{\sigma-1}      (s-r)^{\beta-\sigma-1} dr\\
= & \iota_2 \|F_1\|_{\mathcal F^{\beta, \sigma}(E)}    (t-s)   \int_{\frac{s}{2}}^s  (t-s+r)^{-1} r^{\sigma-1}      (s-r)^{\beta-\sigma-1} dr \\
&+ \iota_2 \|F_1\|_{\mathcal F^{\beta, \sigma}(E)}    (t-s)   \int_0^{\frac{s}{2}}  (t-s+r)^{-1} r^{\sigma-1}      (s-r)^{\beta-\sigma-1} dr.
\end{align*} 
The last two integrals can be estimated as 
\begin{align*}
& (t-s)   \int_{\frac{s}{2}}^s  (t-s+r)^{-1} r^{\sigma-1}    (s-r)^{\beta-\sigma-1} dr\\
&\leq 2(t-s)^\sigma s^{-1} \int_{\frac{s}{2}}^s [(t-s)^{1-\sigma} r^\sigma (t-s+r)^{-1}] (s-r)^{\beta-\sigma-1} dr\\
&\leq 2(t-s)^\sigma s^{-1} \int_0^s (s-r)^{\beta-\sigma-1} dr\\
&= \frac{2}{\beta-\sigma}  (t-s)^\sigma  s^{\beta-\sigma-1},
\end{align*} 
and 
\begin{align*}
& (t-s)   \int_0^{\frac{s}{2}}   (t-s+r)^{-1} r^{\sigma-1}    (s-r)^{\beta-\sigma-1} dr\\
&\leq  (t-s) 2^{1-\beta+\sigma} s^{\beta-\sigma-1} \int_0^\infty (t-s+r)^{-1} r^{\sigma-1} dr\\
&= \frac{2^{1-\beta+\sigma}}{\sin(\sigma\pi)}  (t-s)^\sigma  s^{\beta-\sigma-1}.
\end{align*} 
Thereby,
$$J_5\leq \iota_2 \|F_1\|_{\mathcal F^{\beta, \sigma}(E)}  \Big[\frac{2}{\beta-\sigma}+\frac{2^{1-\beta+\sigma}}{\sin(\sigma\pi)} \Big](t-s)^\sigma  s^{\beta-\sigma-1}.$$

In this way, we have
\begin{align*}
\|A&I_1(t)-AI_1(s)\|\\
\leq & \Big[ \|AS(\cdot) \xi \|_{\mathcal F^{\beta, \sigma}(E)} + \Big\{   2\iota_0  + \frac{\iota_{1-\sigma}\iota_\sigma}  {\sigma}         +  \frac{\iota_1      }  {\sigma}  +  \frac{2\iota_2}{\beta-\sigma}+\frac{2^{1-\beta+\sigma} \iota_2}{\sin(\sigma\pi)} \Big\} \\
     & \quad \times \|F_1\|_{\mathcal F^{\beta, \sigma}(E)}    \Big ] (t-s)^\sigma  s^{\beta-\sigma-1}, 
 \hspace{1cm} 0<s<t\leq T.
\end{align*}
Thus, for any $ 0<\epsilon<T$, 
$$AI_1\in \mathcal C^\sigma([\epsilon,T];E). $$

{\bf Step 5}.  Let us prove the estimate   \eqref{H10.5}.   By  \eqref {H12},  \eqref{H10} and  \eqref{H11},
\begin{align*}
&\mathbb E \|A^\beta X(t)\|^2\\
=& \mathbb E\|A^\beta [S(t)\xi+\int_0^t  S(t-s)  F_1(s) ds+\int_0^t  S(t-s)  G(s) dW(s)]\|^2 \\
 \leq &3 \|S(t)\|^2 \mathbb E \|A^\beta \xi\|^2 +3 \mathbb E  \Big\|\int_0^t A^\beta S(t-s)  F_1(s) ds\Big\|^2 \\
& +3 \mathbb E  \Big\|\int_0^t A^\beta S(t-s)  G(s) dW(s)\Big\|^2 \\
\leq &3 \iota_0^2 e^{-2\nu t} \mathbb E \|A^\beta \xi\|^2 +3 \iota_\beta^2 \mathbb E \|F_1\|_{\mathcal F^{\beta,\sigma}(E)}^2  \Big\|\int_0^t (t-s)^{-\beta}   s^{\beta-1} ds\Big\|^2 \\
& +3 c(E) \int_0^t  \mathbb E  \|A^\beta S(t-s)  G(s)\|^2 ds \\
\leq &3 \iota_0^2 e^{-2\nu t} \mathbb E \|A^\beta \xi\|^2 +3 \iota_\beta^2   B(\beta,1-\beta)^2   \mathbb E \|F_1\|_{\mathcal F^{\beta,\sigma}(E)}^2   \\
& +3 c(E) \int_0^t   \|A^{\beta-\delta}\|^2 \| S(t-s)\|^2  \mathbb E \|A^\delta G(s)\|^2 ds \\
\leq &3 \iota_0^2 e^{-2\nu t} \mathbb E \|A^\beta \xi\|^2 +3 \iota_\beta^2   B(\beta,1-\beta)^2   \mathbb E \|F_1\|_{\mathcal F^{\beta,\sigma}(E)}^2   \\
& +3 c(E)\|A^{\beta-\delta}\|^2 \iota_0^2  \int_0^t   s^{2\beta-1}  \mathbb E \|A^\delta G\|_{\mathcal F^{\beta+\frac{1}{2},\sigma}(\gamma(H;E))}^2 ds\\
= &3 \iota_0^2 e^{-2\nu t} \mathbb E \|A^\beta \xi\|^2 +3 \iota_\beta^2   B(\beta,1-\beta)^2   \mathbb E \|F_1\|_{\mathcal F^{\beta,\sigma}(E)}^2   \\
& +\frac{3 c(E)\|A^{\beta-\delta}\|^2 \iota_0^2 }{ 2\beta}    \mathbb E \|A^\delta G\|_{\mathcal F^{\beta+\frac{1}{2},\sigma}(\gamma(H;E))}^2  t^{2\beta}, \hspace{1cm} 0\leq t\leq T.
\end{align*}
Therefore, \eqref{H10.5} holds true.
\end{proof}


\subsection{Maximal regularity of mild solutions}
Let us   present maximal   regularity of mild solutions for  \eqref{linear}. For this study, the spatial regularity of $G$ in (Ga) is not necessary. In stead of that, we  assume that    (Gb) takes place. 
\begin{theorem} \label{Th4} 
Let {\rm (Aa)}, {\rm (Ab)},   {\rm (F1)} and   {\rm (Gb)}  be satisfied. 
 Suppose further that   $\xi \in \mathcal D(A^\beta)$ a.s.  Then,  there exists a unique mild solution $X$ of \eqref{linear} possessing the regularity 
\begin{equation*} \label{regularity theorem autonomous linear evolution equationXSpace}
 X \in   \mathcal C^\alpha([0,T];E)  \hspace{1cm} \text{ a.s.,     } 0\leq \alpha <\beta,
 \end{equation*}
and 
\begin{itemize}
  \item [{\rm (i)}]  When $\beta\geq \frac{1}{4}$, for any $0<\epsilon<T$, $\frac{1}{4}\leq \theta \leq \beta,$  $ 0\leq \gamma<\frac{1}{2}-\theta$ and $\gamma \leq \sigma$,
                   $$A^\theta X \in \mathcal C^\gamma ([\epsilon,T];E) \hspace{1cm} \text{a.s. }$$ 
 \item [{\rm (ii)}]  When $\beta< \frac{1}{4}$, for any $0<\epsilon<T$, $0 \leq \theta \leq \beta$, $ 0\leq \gamma<\theta$ and $\gamma \leq \sigma$,
                   $$A^\theta X \in \mathcal C^\gamma ([\epsilon,T];E) \hspace{1cm} \text{a.s. }$$ 
\end{itemize}
In addition,  $X$ satisfies the estimate
\begin{align}
\mathbb E \|A^\beta X(t)\|^2   
\leq &C[e^{-2\nu t} \mathbb E \|A^\beta \xi\|^2 +  \mathbb E \|F_1\|_{\mathcal F^{\beta,\sigma}(E)}^2     \label{H17.6}\\
&\quad  + \mathbb E\|G\|_{\mathcal F^{\beta+\frac{1}{2},\sigma}(\gamma(H;E))}^2],  \hspace{1cm} 0\leq t\leq T,      \notag   
 \end{align}
where $C$ is some positive constant depending only on $\iota_\theta \, (\theta\geq 0)$ in \eqref{H10} and the exponents, and $ \nu$ is defined by   \eqref{H11}.
\end{theorem}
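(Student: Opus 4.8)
The strategy is to reuse the decomposition $X=I_1+I_2$ with $I_1,I_2$ defined by \eqref{Pt9} and \eqref{Pt10}, and to run essentially the same estimates as in the proofs of Theorems \ref{Th2} and \ref{Th3}, except that now $G$ is controlled only in $\mathcal F^{\beta+\frac12,\sigma}((0,T];\gamma(H;E))$ without the smoothing factor $A^\delta$. Existence and uniqueness of the mild solution is immediate: $I_1\in\mathcal C([0,T];E)$ by Step 1 of the proof of Theorem \ref{Th2} (that argument used only (F1), not (Ga)), while $I_2$ is well-defined on $[0,T]$ since, by \eqref{H12} and \eqref{H11}, $\int_0^t\mathbb E\|S(t-s)G(s)\|_{\gamma(H;E)}^2\,ds\leq \iota_0^2\,\mathbb E\|G\|_{\mathcal F^{\beta+\frac12,\sigma}(\gamma(H;E))}^2\,t^{2\beta}/(2\beta)<\infty$; the mild-solution identity then follows directly from the definition of the stochastic convolution and Fubini's theorem (as in Step 3 of Theorem \ref{Th2}), with no need for $AI_2$ to exist. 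Uniqueness follows because two mild solutions differ by $Y(t)=\int_0^tS(t-s)\,0\,ds+\cdots=0$; more precisely, since the forcing and noise cancel, the difference satisfies $Y(t)=S(t)\cdot 0$, hence $Y\equiv 0$.

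The Hölder regularity $X\in\mathcal C^\alpha([0,T];E)$ for $0\le\alpha<\beta$ is obtained by treating $I_1$ and $I_2$ separately. For $I_1$ one repeats Step 2 of the proof of Theorem \ref{Th3}, but without the term $AS(\cdot)\xi$ being needed in $\mathcal F^{\beta,\sigma}$: here $\xi\in\mathcal D(A^\beta)$ a.s. still gives $AS(\cdot)\xi\in\mathcal F^{\beta,\sigma}((0,T];E)$ via \eqref{Eq53}, so the bounds \eqref{H16.5}, \eqref{Eq29} go through verbatim and yield $I_1\in\mathcal C^\beta([0,T];E)$ a.s. For $I_2$ one uses the same splitting as in Step 3 of Theorem \ref{Th3}:
\begin{align*}
I_2(t)-I_2(s)=\int_s^tS(t-r)G(r)\,dW(r)+\int_0^s[S(t-r)-S(s-r)]G(r)\,dW(r),
\end{align*}
and Theorem \ref{T9}; the first term contributes $(t^{2\beta}-s^{2\beta})$-type control and the second contributes, using $\|[S(t-r)-S(s-r)]A^{-\theta}\|\le\|\int_{s-r}^{t-r}A^{1-\theta}S(u)\,du\|$ together with \eqref{H10}, a factor $(t-s)^{2\theta}$ for any $0<\theta<1$. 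Choosing $\theta$ close to $\frac12$ (so that the exponent $2\theta$ exceeds $2\beta$, legitimate since $\beta<\frac12$) and invoking the Gaussianity of $I_2$ via Theorem \ref{Th0} gives $I_2\in\mathcal C^{\gamma_1}([0,T];E)$ a.s.\ for every $0<\gamma_1<\beta$. Combining the two yields the stated $\mathcal C^\alpha$-regularity.

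For the fractional-power regularity (i) and (ii) one estimates $\mathbb E\|A^\theta I_2(t)-A^\theta I_2(s)\|^2$ by the same two-integral split, now with $A^\theta$ inserted: the ``near-diagonal'' term is bounded using $\|A^\theta S(t-r)\|\le\iota_\theta(t-r)^{-\theta}$ together with \eqref{H12} for $G$, giving $\int_s^t(t-r)^{-2\theta}r^{2\beta-1}\,dr$, which is finite precisely when $2\theta<2\beta+1$, i.e.\ $\theta<\beta+\frac12$ — hence the constraint $\gamma<\frac12-\theta$ in case (i) comes from balancing the Hölder exponent against $\beta+\frac12-\theta$; the ``increment'' term is handled by writing $A^\theta[S(t-r)-S(s-r)]=\int_{s-r}^{t-r}A^{1+\theta}S(u)\,du$ and bounding $\int_{s-r}^{t-r}u^{-1-\theta}\,du$ after pulling out the appropriate power, exactly as in Step 3 of Theorem \ref{Th3}. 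The two cases $\beta\ge\frac14$ and $\beta<\frac14$ arise from whether the admissible range of $\theta$ reaches $\frac14$ (and correspondingly whether one can afford $\gamma<\frac12-\theta$ or only $\gamma<\theta$). For $A^\theta I_1$ one repeats Step 4 of Theorem \ref{Th3} with $J_1,\dots,J_5$ where $A$ is replaced by $A^\theta$; since $\theta\le\beta$ the integral $\int_0^t(t-r)^{-\theta}\|F_1(r)-F_1(t)\|\,dr$ and its relatives stay convergent, and $A^\theta I_1$ is $\sigma$-Hölder on $[\epsilon,T]$. Finally, the moment bound \eqref{H17.6} is proved exactly as in Step 5 of the proof of Theorem \ref{Th3}, using \eqref{H10}, $\|A^\theta S(t-s)\|\le\iota_\theta(t-s)^{-\theta}$ with $\theta=\beta$, $B(\beta,1-\beta)$ for the drift integral, and for the stochastic term $\|A^\beta S(t-s)\|^2\le\iota_\beta^2(t-s)^{-2\beta}$ together with (Gb), giving $\int_0^t(t-s)^{-2\beta}s^{2\beta-1}\,ds=B(1-2\beta,2\beta)$, which is finite since $\beta<\frac12$; this is the one place where $\beta<\frac12$ (rather than $\beta<1$) is genuinely used. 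The main obstacle is bookkeeping the admissible exponent ranges in (i) and (ii) so that every Beta integral converges while the Kolmogorov/Gaussian continuity exponent is as large as claimed; the analytic estimates themselves are routine adaptations of the earlier proofs.
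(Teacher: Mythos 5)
Your proposal follows essentially the same route as the paper's proof: the decomposition $X=I_1+W_0$, the two-term splitting of the stochastic-convolution increment combined with the Gaussian Kolmogorov theorem (Theorem \ref{Th0}), the reuse of Steps 2, 4 and 5 of the proof of Theorem \ref{Th3} for the $I_1$ part, and the bound \eqref{Eq5} with $\theta=\beta$ for \eqref{H17.6}; the paper merely packages the stochastic-convolution estimates into a separate Lemma \ref{Th5}. One bookkeeping slip worth correcting: the near-diagonal integral $\int_s^t(t-r)^{-2\theta}r^{2\beta-1}\,dr$ converges when $2\theta<1$ (not when $2\theta<2\beta+1$), and on $[\epsilon,T]$ the constraint $\gamma<\frac12-\theta$ in (i) arises from bounding $r^{2\beta-1}\le\epsilon^{2\beta-1}$ so that this term produces the factor $(t-s)^{1-2\theta}$, not from a balance against $\beta+\frac12-\theta$.
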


For the proof, we  use the following lemma.
\begin{lemma}   \label{Th5}
Let $G$ satisfies {\rm (Gb)}. Put
$$W_\theta(\cdot)=\int_0^\cdot A^\theta S(\cdot-s) G(s)dW(s), \hspace{1cm} 0\leq \theta\leq \beta.$$ 
Then, the  stochastic convolution $W_\theta$ is well-defined on $[0,T]$. Furthermore,  $W_\theta$ possesses the space-time regularity:
\begin{itemize}
  \item [{\rm (i)}] For any $0\leq \theta< \beta$ and $ 0\leq \gamma<\beta-\theta$, 
                   $$W_\theta \in \mathcal C^\gamma ([0,T];E) \hspace{1cm} \text{a.s. }$$ 
 \item [{\rm (ii)}]  When $\beta\geq \frac{1}{4}$, for any $0<\epsilon<T$, $\frac{1}{4}\leq \theta \leq \beta$ and $ 0\leq \gamma<\frac{1}{2}-\theta$,
                   $$W_\theta \in \mathcal C^\gamma ([\epsilon,T];E) \hspace{1cm} \text{a.s. }$$ 
 \item [{\rm (iii)}]  When $\beta< \frac{1}{4}$, for any $0<\epsilon<T$, $0 \leq \theta \leq \beta$ and $ 0\leq \gamma<\theta$,
                   $$W_\theta \in \mathcal C^\gamma ([\epsilon,T];E) \hspace{1cm} \text{a.s. }$$ 
\end{itemize}
\end{lemma}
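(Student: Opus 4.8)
The plan is to follow Step~3 of the proof of Theorem~\ref{Th2}, where a stochastic convolution of exactly this type was analyzed with $A$ in place of $A^\theta$; only the powers of the semigroup and the exponent bookkeeping change. First, for the well-posedness of $W_\theta$, the integrand $r\mapsto A^\theta S(t-r)G(r)$ is adapted and $H$-strongly measurable because $G$ is, so it suffices to check square integrability. By Lemma~\ref{T2.12}, \eqref{H10}, and the bound $\|G(r)\|_{\gamma(H;E)}\le\|G\|_{\mathcal F^{\beta+\frac{1}{2},\sigma}(\gamma(H;E))}\,r^{\beta-\frac{1}{2}}$ (which is \eqref{H12} for the space $\mathcal F^{\beta+\frac{1}{2},\sigma}$),
\begin{align*}
\int_0^t\mathbb E\|A^\theta S(t-r)G(r)\|_{\gamma(H;E)}^2\,dr
&\le\iota_\theta^2\,\mathbb E\|G\|_{\mathcal F^{\beta+\frac{1}{2},\sigma}(\gamma(H;E))}^2\int_0^t(t-r)^{-2\theta}r^{2\beta-1}\,dr\\
&=\iota_\theta^2\,\mathbb E\|G\|_{\mathcal F^{\beta+\frac{1}{2},\sigma}(\gamma(H;E))}^2\,B(2\beta,1-2\theta)\,t^{2(\beta-\theta)},
\end{align*}
which is finite since $0\le\theta\le\beta<\frac{1}{2}$; hence $W_\theta$ is well-defined on $[0,T]$ and, by Theorem~\ref{T9}, it is a continuous, centered Gaussian process.

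For the regularity I would estimate the second moment of the increments. For $0\le s<t\le T$ write
\begin{align*}
W_\theta(t)-W_\theta(s)=&\int_s^t A^\theta S(t-r)G(r)\,dW(r)\\
&+\int_0^s A^\theta[S(t-r)-S(s-r)]G(r)\,dW(r);
\end{align*}
as in the proof of Theorem~\ref{Th2}, the two integrals are independent and centered, so $\mathbb E\|W_\theta(t)-W_\theta(s)\|^2$ is the sum of their second moments. Applying the isometry in Theorem~\ref{T9}, Lemma~\ref{T2.12}, \eqref{H10}, \eqref{H12}, and — for the second term — the identity $A^\theta[S(t-r)-S(s-r)]=[S(t-s)-I]A^{-\kappa}A^{\theta+\kappa}S(s-r)$ together with \eqref{H11.5} and \eqref{H10} with a free parameter $\kappa\in(0,1]$, everything reduces to the scalar integrals $\int_s^t(t-r)^{-2\theta}r^{2\beta-1}\,dr$ and $(t-s)^{2\kappa}\int_0^s(s-r)^{-2(\theta+\kappa)}r^{2\beta-1}\,dr$. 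On $[0,T]$ I bound $r^{2\beta-1}\le(r-s)^{2\beta-1}$ in the first and take $\kappa=\beta-\theta$ (admissible exactly when $\theta<\beta$) in the second, where $\int_0^s(s-r)^{-2\beta}r^{2\beta-1}\,dr=B(2\beta,1-2\beta)<\infty$; this yields $\mathbb E\|W_\theta(t)-W_\theta(s)\|^2\le C\,\mathbb E\|G\|_{\mathcal F^{\beta+\frac{1}{2},\sigma}(\gamma(H;E))}^2\,(t-s)^{2(\beta-\theta)}$. On $[\epsilon,T]$ I instead use $r\ge\epsilon$ and $2\beta-1<0$ to bound $r^{2\beta-1}\le\epsilon^{2\beta-1}$, so the first integral contributes $(t-s)^{1-2\theta}$, while $\int_0^s(s-r)^{-2(\theta+\kappa)}r^{2\beta-1}\,dr=B(2\beta,1-2(\theta+\kappa))\,s^{2(\beta-\theta-\kappa)}$ is finite for $\theta+\kappa<\frac{1}{2}$ and bounded on $[\epsilon,T]$ for either sign of $2(\beta-\theta-\kappa)$; letting $\kappa\nearrow\frac{1}{2}-\theta$ this gives $\mathbb E\|W_\theta(t)-W_\theta(s)\|^2\le C_\epsilon\,(t-s)^{\rho}$ for every $\rho<1-2\theta$.

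Finally I would apply Theorem~\ref{Th0} to the centered Gaussian process $W_\theta$: the $[0,T]$-bound of order $(t-s)^{2(\beta-\theta)}$ gives $W_\theta\in\mathcal C^\gamma([0,T];E)$ a.s. for $\gamma<\beta-\theta$ (item~(i)), and the $[\epsilon,T]$-bound of order $(t-s)^\rho$, $\rho<1-2\theta$, gives $W_\theta\in\mathcal C^\gamma([\epsilon,T];E)$ a.s. for the exponents in items~(ii) and~(iii). I expect the delicate part to be the bookkeeping of the exponent constraints — $\theta<\frac{1}{2}$ for well-definedness and for the first integral, $\theta+\kappa<\frac{1}{2}$ for convergence of the second, and in addition $\kappa\le\beta-\theta$ when one insists on a constant independent of a lower cutoff — together with the justification (exactly as in the proof of Theorem~\ref{Th2}) that the two increment integrals are independent with mean zero, so that the sharper Gaussian continuity theorem (Theorem~\ref{Th0}), not Theorem~\ref{Th-1}, is the relevant tool.
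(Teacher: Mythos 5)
Your argument is correct and follows the paper's overall scheme: the same well-definedness estimate, the same decomposition of $W_\theta(t)-W_\theta(s)$ into the integral over $[s,t]$ and the ``history'' integral over $[0,s]$, the same M-type 2 moment bound from Theorem \ref{T9}, and the same final appeal to the Gaussian version of Kolmogorov's criterion (Theorem \ref{Th0}). Where you genuinely diverge is in how the history term is controlled. The paper writes $A^\theta[S(t-r)-S(s-r)]=-\int_{s-r}^{t-r}A^{1+\theta}S(u)\,du$ and then manipulates the resulting scalar integral three different ways: splitting $u^{-1-\theta}=u^{-\beta}u^{\beta-\theta-1}$ for (i), introducing an auxiliary exponent $\epsilon_1$ and the factorization $(t-r)^{1-4\theta-\epsilon_1}(t-r)^{\epsilon_1+2\theta-1}$ for (ii) (which needs $\theta\geq\frac14$), and the crude bound $(t-r)^{-2\theta}\leq(s-r)^{-2\theta}$ for (iii) (which needs $\theta<\frac14$). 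You instead factor $A^\theta[S(t-r)-S(s-r)]=[S(t-s)-I]A^{-\kappa}\cdot A^{\theta+\kappa}S(s-r)$ and use \eqref{H11.5} with a single free parameter $\kappa$; taking $\kappa=\beta-\theta$ reproduces (i), and letting $\kappa\nearrow\frac12-\theta$ on $[\epsilon,T]$ handles (ii) and (iii) in one stroke, with no case distinction at $\theta=\frac14$. Your route is cleaner and in fact yields the stronger conclusion $W_\theta\in\mathcal C^\gamma([\epsilon,T];E)$ for all $\gamma<\frac12-\theta$ even when $\theta\leq\beta<\frac14$, which strictly contains the exponent range $\gamma<\theta$ claimed in item (iii); since you only need to cover the stated ranges, this is harmless. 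One cosmetic point you share with the paper: independence and zero mean do not literally give additivity of $\mathbb E\|\cdot\|^2$ in a Banach space, but only the two-sided comparison up to a factor $2$ is needed, so nothing is lost.
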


\begin{proof}
First, let us show that $W_\theta$ 
is well-defined on $[0,T].$ 
By \eqref{H12} and \eqref {H10},  for any $0\leq \theta\leq \beta$,
\begin{align}
&\int_0^t \mathbb E\|A^\theta  S(t-s) G(s)\|_{\gamma(H;E)}^2 ds \notag\\
&\leq \int_0^t \|A^\theta  S(t-s)\|^2 \mathbb E\|G(s)\|_{\gamma(H;E)}^2 ds \notag \\
&\leq \iota_\theta^2  \mathbb E\|G\|_{\mathcal F^{\beta+\frac{1}{2},\sigma}(\gamma(H;E))}^2 \int_0^t (t-s)^{-2\theta} s^{2\beta-1}ds \notag \\
&=\iota_\theta^2  \mathbb E\|G\|_{\mathcal F^{\beta+\frac{1}{2},\sigma}(\gamma(H;E))}^2 B(2\beta, 1-2\theta) t^{2(\beta-\theta)}< \infty, \hspace{1cm} 0\leq t\leq T.       \label{Eq5}
\end{align}
Thereby, $W_\theta$ is a well-defined  Gaussian process on $[0,T]$.

Since $A^\theta$ is closed, Lemma \ref{T10} provides that for any $0\leq \theta\leq \beta$, 
$$ A^\theta W_0(t) =W_\theta(t), \hspace{1cm} 0\leq t\leq T. $$ 

Second, let us   verify {\rm (i)} by using 
 the same  argument as in  Step 3 in the proof of Theorem \ref{Th2} and the expression: 
\begin{align*}
A^\theta W_0(t)&-A^\theta W_0(s)=\int_s^t A^\theta S(t-r)G(r)dW(r) \\
&+\int_0^s A^\theta [S(t-r)-S(s-r)]G(r)dW(r), \hspace{1.5cm}   0\leq s<t\leq T.
\end{align*}

Since the integrals in the right-hand side of the latter equality are independent and have  zero expectation,  
\begin{align*}
\mathbb E& \|A^\theta W_0(t)-A^\theta W_0(s)\|^2  \notag\\
=&\mathbb E \Big\|\int_s^t A^\theta S(t-r) G(r)dW(r)\Big\|^2 \notag \\
     &+\mathbb E\Big\|\int_0^s A^\theta [S(t-r)-S(s-r)] G(r)dW(r)\Big\|^2  \notag\\
\leq & c(E) \int_s^t \mathbb E\|A^\theta  S(t-r) G(r)\|_{\gamma(H;E)}^2dr \notag \\
    &+c(E) \int_0^s \mathbb E\|A^\theta  [S(t-r)-S(s-r)] G(r)\|_{\gamma(H;E)}^2dr  \notag\\
\leq & c(E) \int_s^t  \|A^\theta  S(t-r)\|^2  \mathbb E\| G(r)\|_{\gamma(H;E)}^2 dr\notag\\
&+ c(E) \int_0^s  \|A^\theta [S(t-r)-S(s-r)]\|^2 \mathbb E\| G(r)\|_{\gamma(H;E)}^2dr \notag\\
= & c(E) \int_s^t  \|A^\theta  S(t-r)\|^2  \mathbb E\| G(r)\|_{\gamma(H;E)}^2 dr\notag\\
&+ c(E) \int_0^s  \Big\|\int_{s-r}^{t-r} A^{1+\theta}S(u)du\Big\|^2 \mathbb E\| G(r)\|_{\gamma(H;E)}^2dr,\hspace{1 cm}   0\leq s<t\leq T.  \notag
\end{align*}
Therefore,   \eqref {H12}, \eqref{H10} and \eqref{H11} give   
\begin{align}
\mathbb E& \|A^\theta W_0(t)-A^\theta W_0(s)\|^2        \notag\\
      \leq & c(E) \iota_\theta^2 \mathbb E \| G\|_{\mathcal F^{\beta+\frac{1}{2},\sigma}(\gamma(H;E))}^2  \int_s^t (t-r)^{-2\theta}r^{2\beta-1}dr       \label{H16.6}\\
        &+ c(E) \mathbb E \| G\|_{\mathcal F^{\beta+\frac{1}{2},\sigma}(\gamma(H;E))}^2 \int_0^s\Big(\int_{s-r}^{t-r} \|A^{1+\theta} S(u)\|^2 du\Big)^2 r^{2\beta-1}dr       \notag\\
     \leq &c(E)\iota_\theta^2 \mathbb E \| G\|_{\mathcal F^{\beta+\frac{1}{2},\sigma}(\gamma(H;E))}^2  \int_s^t (t-r)^{-2\theta}(r-s)^{2\beta-1}dr         \notag\\
     &  +c(E)\iota_{1+\theta}^2 \mathbb E \| G\|_{\mathcal F^{\beta+\frac{1}{2},\sigma}(\gamma(H;E))}^2 \int_0^s \Big(\int_{s-r}^{t-r} u^{-\theta-1}du\Big)^2 r^{2\beta-1}dr  \notag \\
      = &c(E)\iota_\theta^2 \mathbb E \| G\|_{\mathcal F^{\beta+\frac{1}{2},\sigma}(\gamma(H;E))}^2  \int_s^t (t-r)^{-2\theta}(r-s)^{2\beta-1}dr\notag\\
     &  +c(E)\iota_{1+\theta}^2 \mathbb E \| G\|_{\mathcal F^{\beta+\frac{1}{2},\sigma}(\gamma(H;E))}^2 \int_0^s \Big(\int_{s-r}^{t-r} u^{-\theta-1}du\Big)^2 r^{2\beta-1}dr  \notag  \\
     = &c(E)\iota_\theta^2 \mathbb E \| G\|_{\mathcal F^{\beta+\frac{1}{2},\sigma}(\gamma(H;E))}^2 B(2\beta,1-2\theta)  (t-s)^{2(\beta-\theta)}\notag\\
     &  +c(E)\iota_{1+\theta}^2 \mathbb E \| G\|_{\mathcal F^{\beta+\frac{1}{2},\sigma}(\gamma(H;E))}^2 \int_0^s \Big(\int_{s-r}^{t-r} u^{-\theta-1}du\Big)^2   \notag\\
&\hspace{1cm} \times r^{2\beta-1}dr, \hspace{1cm}   0\leq s<t\leq T.  \notag
\end{align}

The last integral is evaluated by dividing $-\theta-1$ as $-\theta-1=-\beta+\beta-\theta-1$: 
\begin{align*}
\Big(\int_{s-r}^{t-r} u^{-\theta-1}du\Big)^2&\leq    \Big[\int_{s-r}^{t-r} (s-r)^{-\beta}u^{\beta-\theta-1}du\Big]^2\notag\\
&= (s-r)^{-2\beta}\frac{[(t-r)^{\beta-\theta}-(s-r)^{\beta-\theta}]^2}{(\beta-\theta)^2}\notag\\
&\leq (s-r)^{-2\beta}\frac{(t-s)^{2(\beta-\theta)}}{(\beta-\theta)^2}. 
\end{align*}
Hence, 
\begin{align}
&\mathbb E \|A^\theta W_0(t)-A^\theta W_0(s)\|^2  \notag\\
    \leq &c(E) \iota_\theta^2 \mathbb E \| G\|_{\mathcal F^{\beta+\frac{1}{2},\sigma}(\gamma(H;E))}^2 B(2\beta,1-2\theta)  (t-s)^{2(\beta-\theta)}\notag\\
     &  +\frac{c(E)\iota_{1+\theta}^2 \mathbb E \| G\|_{\mathcal F^{\beta+\frac{1}{2},\sigma}(\gamma(H;E))}^2 }{(\beta-\theta)^2}   (t-s)^{2(\beta-\theta)}    \int_0^s (s-r)^{-2\beta} r^{2\beta-1}dr   \notag  \\
    \leq &\Big[c(E) \iota_\theta^2 \mathbb E  \| G\|_{\mathcal F^{\beta+\frac{1}{2},\sigma}(\gamma(H;E))}^2 B(2\beta,1-2\theta)    \label{H9.5} \\
     &  +\frac{c(E) \iota_{1+\theta}^2 \mathbb E \| G\|_{\mathcal F^{\beta+\frac{1}{2},\sigma}(\gamma(H;E))}^2 B(2\beta, 1-2\beta)}{(\beta-\theta)^2}\Big]  \notag\\
& \times (t-s)^{2(\beta-\theta)}, \hspace{1cm}   0\leq s<t\leq T.   \notag
\end{align}

On the other hand, by Theorem \ref{T9}, $  A^\theta W_0$ is a Gaussian process on $[0,T]$.  Combining this with \eqref{H9.5}, we apply Theorem \ref{Th0} to  $A^\theta W_0$. So,  for any $0\leq \theta< \beta$ and $ 0\leq \gamma<\beta-\theta$,  
                   $$W_\theta \in \mathcal C^\gamma ([0,T];E) \hspace{1cm} \text{a.s. }$$

Third, let us   prove  {\rm (ii)}. Let $0<\epsilon<T$ and $\frac{1}{4}\leq \theta \leq \beta$.
We use again  \eqref{H16.6}. For $\epsilon\leq s\leq t\leq T$, 
\begin{align}
\mathbb E& \|A^\theta W_0(t)-A^\theta W_0(s)\|^2        \notag\\
      \leq & c(E) \iota_\theta^2 \mathbb E \| G\|_{\mathcal F^{\beta+\frac{1}{2},\sigma}(\gamma(H;E))}^2  \int_s^t (t-r)^{-2\theta}r^{2\beta-1}dr\notag\\
        &+ c(E) \mathbb E \| G\|_{\mathcal F^{\beta+\frac{1}{2},\sigma}(\gamma(H;E))}^2 \int_0^s\Big(\int_{s-r}^{t-r} \|A^{1+\theta} S(u)\| du\Big)^2 r^{2\beta-1}dr       \notag\\
\leq &c(E)\iota_\theta^2 \mathbb E \| G\|_{\mathcal F^{\beta+\frac{1}{2},\sigma}(\gamma(H;E))}^2  \epsilon^{2\beta-1} \int_s^t (t-r)^{-2\theta}dr          \notag\\
     &  +c(E)\iota_{1+\theta}^2 \mathbb E \| G\|_{\mathcal F^{\beta+\frac{1}{2},\sigma}(\gamma(H;E))}^2 \int_0^s \Big(\int_{s-r}^{t-r} u^{-\theta-1}du\Big)^2 r^{2\beta-1}dr  \notag \\
= &\frac{    c(E)\iota_\theta^2 \mathbb E \| G\|_{\mathcal F^{\beta+\frac{1}{2},\sigma}(\gamma(H;E))}^2  \epsilon^{2\beta-1}(t-s)^{1-2\theta}       } {   1-2\theta}  \notag\\
     &  +\frac{   c(E)\iota_{1+\theta}^2 \mathbb E \| G\|_{\mathcal F^{\beta+\frac{1}{2},\sigma}(\gamma(H;E))}^2      }{\theta^2  }   \notag\\
&\times \int_0^s [(t-r)^\theta-(s-r)^\theta]^2 (t-r)^{-2\theta}  (s-r)^{-2\theta}  r^{2\beta-1}dr  \notag  \\
     \leq &\frac{    c(E)\iota_\theta^2 \mathbb E \| G\|_{\mathcal F^{\beta+\frac{1}{2},\sigma}(\gamma(H;E))}^2  \epsilon^{2\beta-1}(t-s)^{1-2\theta}       } {   1-2\theta}  \label{H19.6}\\
     &  +\frac{   c(E)\iota_{1+\theta}^2 \mathbb E \| G\|_{\mathcal F^{\beta+\frac{1}{2},\sigma}(\gamma(H;E))}^2   (t-s)^{2\theta}}   {  \theta^2 }  \notag\\
& \times    \int_0^s (t-r)^{-2\theta} (s-r)^{-2\theta} r^{2\beta-1}dr.  \notag
\end{align}

Let $0<\epsilon_1<\min\{1-2\theta, 1-2\beta\}. $ Since $1-4\theta-\epsilon_1<0$ and $\epsilon_1+2\theta-1<0$, 
\begin{align*}
&\int_0^s (t-r)^{-2\theta} (s-r)^{-2\theta} r^{2\beta-1}dr        \notag\\
  &  = \int_0^s (t-r)^{1-4\theta-\epsilon_1}  (t-r)^{\epsilon_1+2\theta-1} (s-r)^{-2\theta} r^{2\beta-1}dr \notag\\
  &   \leq   (t-s)^{1-4\theta-\epsilon_1}\int_0^s   (s-r)^{\epsilon_1-1}  r^{2\beta-1}dr \notag\\
  &  =   (t-s)^{1-4\theta-\epsilon_1}  B(2\beta,\epsilon_1) s^{2\beta+\epsilon_1-1} \notag\\
  &  \leq      B(2\beta,\epsilon_1) \epsilon^{2\beta+\epsilon_1-1}(t-s)^{1-4\theta-\epsilon_1}, \hspace{1cm} \epsilon\leq s\leq t\leq T. \notag
    \end{align*}
Therefore,  
\begin{align*}
\mathbb E& \|A^\theta W_0(t)-A^\theta W_0(s)\|^2        \notag\\
     \leq &\frac{    c(E)\iota_\theta^2 \mathbb E \| G\|_{\mathcal F^{\beta+\frac{1}{2},\sigma}(\gamma(H;E))}^2  \epsilon^{2\beta-1}(t-s)^{1-2\theta}       } {   1-2\theta}  \notag\\
     &  +\frac{   c(E)\iota_{1+\theta}^2 B(2\beta,\epsilon_1) \epsilon^{2\beta+\epsilon_1-1}\mathbb E \| G\|_{\mathcal F^{\beta+\frac{1}{2},\sigma}(\gamma(H;E))}^2   }   {  \theta^2 }\notag \\
& \times (t-s)^{1-2\theta-\epsilon_1}, \hspace{1cm} \epsilon\leq s\leq t\leq T.  \notag
\end{align*}
 Theorem \ref{Th0} applied to the Gaussian process $A^\theta W_0$ yields that
for any  $ 0\leq \gamma<\frac{1}{2}-\theta-\frac{\epsilon_1}{2}$,
                   $$W_\theta=A^\theta W_0 \in \mathcal C^\gamma ([\epsilon,T];E) \hspace{1cm} \text{a.s. }$$ 
Since $\epsilon_1$ is arbitrary in $(0,\min\{1-2\theta, 1-2\beta\})$, we conclude that 
for any  $ 0\leq \gamma<\frac{1}{2}-\theta$,
$$W_\theta=A^\theta W_0 \in \mathcal C^\gamma ([\epsilon,T];E) \hspace{1cm} \text{a.s. }$$

Finally, let us prove {\rm (iii)}. Let $0<\epsilon<T$ and $0\leq \theta\leq \beta<\frac{1}{4}$. The integral in the right-hand side of \eqref{H19.6} can be estimated as follows. For $\epsilon \leq s<t\leq T$, 
\begin{align*}
\int_0^s (t-r)^{-2\theta} (s-r)^{-2\theta} r^{2\beta-1}dr   &\leq \int_0^s (s-r)^{-4\theta} r^{2\beta-1}dr     \notag \\
&= s^{2\beta-4\theta}  B(2\beta,1-4\theta)       \notag \\
&\leq \max\{\epsilon^{2\beta-4\theta}, T^{2\beta-4\theta}\}  B(2\beta,1-4\theta).
\end{align*}
Combining this with \eqref{H19.6} yields that
\begin{align*}
\mathbb E& \|A^\theta W_0(t)-A^\theta W_0(s)\|^2        \notag\\
  \leq &\frac{    c(E)\iota_\theta^2 \mathbb E \| G\|_{\mathcal F^{\beta+\frac{1}{2},\sigma}(\gamma(H;E))}^2  \epsilon^{2\beta-1}(t-s)^{1-2\theta}       } {   1-2\theta}  \label{H19.6}\\
     &  +\frac{   c(E)\iota_{1+\theta}^2 \max\{\epsilon^{2\beta-4\theta}, T^{2\beta-4\theta}\}  B(2\beta,1-4\beta)\mathbb E \| G\|_{\mathcal F^{\beta+\frac{1}{2},\sigma}(\gamma(H;E))}^2   }   {  \theta^2 }         \notag\\
& \times  (t-s)^{2\theta},   \hspace{4cm} \epsilon \leq s<t\leq T.
\end{align*}
Again, we apply Theorem \ref{Th0} to the Gaussian process $A^\theta W_0$. Then, {\rm (iii)} follows. 
It completes the proof of the theorem.
\end{proof}

\begin{proof}[Proof for Theorem \ref{Th4}]
 In Step 2 of the proof of Theorem \ref{Th3}, we already proved that 
$$I_1 \in \mathcal C^\beta([0,T];E),$$
where $I_1$ is defined by \eqref{Pt9}.
By  choosing $\theta=0$ in Lemma \ref{Th5},  for any $0<\alpha< \beta$, 
$$W_0\in \mathcal C^\alpha([0,T];E) \hspace{1cm} \text{ a.s.  } $$ 
The process $X=I_1+W_0$   is hence a  mild solution of \eqref{linear} in the space
$$X\in \mathcal C^\alpha([0,T];E) \hspace{1cm} \text{ a.s.  } $$
for any $0<\alpha< \beta$. Note that the uniqueness of solution is obvious.


On the other hand, in Step 4 of the proof of Theorem \ref{Th3}, we verified that  for any $ 0<\epsilon<T$, 
$$AI_1 \in \mathcal C^\sigma([\epsilon,T];E).$$
As a consequence,  for any $ 0<\epsilon<T$, 
$$A^\theta I_1=A^{\theta-1} AI_1 \in \mathcal C^\sigma([\epsilon,T];E).$$
Combining this with Lemma \ref{Th5}, it yields {\rm (i)} and {\rm (ii)}.

Let us finally verify the estimate     
 \eqref{H17.6}. 
It is easily seen from Step 5 of the proof of Theorem \ref{Th3} that 
 \begin{align*}
\mathbb E \|A^\beta I_1(t)\|^2
\leq  &3 \iota_0^2 e^{-2\nu t} \mathbb E \|A^\beta \xi\|^2 \\
&+3 \iota_\beta^2   B(\beta,1-\beta)^2   \mathbb E \|F_1\|_{\mathcal F^{\beta,\sigma}(E)}^2, \hspace{1cm} 0\leq t\leq T.
\end{align*}
Combining this and \eqref{Eq5} with $\theta=\beta,$ we obtain that
\begin{align*}
\mathbb E & \|A^\beta X(t)\|^2 \\
\leq & 2\mathbb E \|A^\beta I_1(t)\|^2 + 2\mathbb E \|W_\beta (t)\|^2 \\
\leq &6 \iota_0^2 e^{-2\nu t} \mathbb E \|A^\beta \xi\|^2 +6 \iota_\beta^2   B(\beta,1-\beta)^2   \mathbb E \|F_1\|_{\mathcal F^{\beta,\sigma}(E)}^2\\
&+2c(E)\int_0^t \mathbb E\|A^\beta  S(t-s) G(s)\|_{\gamma(H;E)}^2 ds \notag\\
\leq &6 \iota_0^2 e^{-2\nu t} \mathbb E \|A^\beta \xi\|^2 +6 \iota_\beta^2   B(\beta,1-\beta)^2   \mathbb E \|F_1\|_{\mathcal F^{\beta,\sigma}(E)}^2\\
&+2c(E)\iota_\beta^2  \mathbb E\|G\|_{\mathcal F^{\beta+\frac{1}{2},\sigma}(\gamma(H;E))}^2 B(2\beta, 1-2\beta), \hspace{1cm} 0\leq t\leq T.      
 \end{align*}
Thus, \eqref{H17.6} holds true.
The proof is complete.
\end{proof}

\section{Semilinear evolution equations} \label{section5}
In this section, we handle the semilinear case of the problem \eqref{E2}, i.e. $F_2 \not\equiv 0$ in $E$. For the convenience, let us recall \eqref{E2}:
\begin{equation} \label{E101}
\begin{cases}
dX+AXdt=[F_1(t)+F_2(X)]dt+ G(t)dW(t), \hspace{1cm} 0<t\leq T,\\
X(0)=\xi.
\end{cases}
\end{equation}
We first investigate existence, uniqueness and regularity of  local mild solutions (Theorems \ref{Th8} and \ref{Th9}), and of global strict solutions (Theorem \ref{Th6}) on the basis of solution formula. We then show regular dependence of local mild solutions on initial data (Theorem \ref{Th10}).

It is known that there are two common methods for studying existence of local mild solutions to deterministic parabolic evolution equations. The first one is to use the fixed point theorem for contractions. By constructing  suitable underlying spaces and suitable mappings, one can show that these mappings have fixed points, which are the desired solutions (see, e.g., Yagi \cite{yagi}). This method is available for stochastic parabolic evolution equations. When applied to this kind of equations, local solutions are defined on a non-random interval (see Da Prato-Zabczyk \cite{prato}). 
The second one is the cut-off function method. A systematic study on local mild solutions for \eqref{E101} using this method can be found in  Brze\'{z}niak \cite{Brzezniak1}.

We  use the first method to prove existence of local mild solutions to  \eqref{E101}. For this purpose, we  construct a contraction mapping based on the solution formula and prove that the fixed point of the mapping is a unique local mild solution to \eqref{E101}. In particular, it is shown that the local mild solution becomes a strict solution under certain conditions.

\subsection{Mild solutions}
This subsection proves  existence   of local mild solutions to \eqref{E101} and show their regularity, provided that either  {\rm (Ga)} or {\rm (Gb)} takes place. First, we consider the case where {\rm (Ga)} holds true. 
Let fix  $\eta, \beta, \sigma $ such that
\begin{equation*}
\begin{cases}
 \max\{0, 2\eta-\frac{1}{2}\}<\beta<\eta, \\
 0<\sigma <\beta<\frac{1}{2}.
\end{cases}
\end{equation*}
Assume further that    
\begin{itemize}
\item [{\rm (F2a)}] $F_2\colon\mathcal D(A^\eta)\to E$  and satisfies a Lipschitz condition of the form
     \begin{equation*} 
        \|F_2(x)-F_2(y)\|\leq c_{F_2}  \|A^\eta(x-y)\| \hspace{1cm}  \text{ a.s., }  x,y\in \mathcal D(A^\eta)
      \end{equation*}
where $c_{F_2}>0$ is some  constant.
\end{itemize}
\begin{remark}
In  \cite{Brzezniak1}, Brze\'{z}niak  presented a sufficient condition, say a local Lipschitz condition \eqref{E0.6} on $F_2$, for existence of local mild solutions to \eqref{E101}. The author used the cut-off function method to show that local mild solutions are defined on an interval $[0,\tau_{loc})$, where $\tau_{loc}$ is a stopping time. 
\end{remark}
\begin{theorem}\label{Th8}
Let {\rm (Aa)}, {\rm (Ab)}, {\rm (F1)}, {\rm (F2a)} and {\rm (Ga)}   be satisfied.
Assume that  $\xi\in \mathcal D(A^\beta)$ a.s. such that $\mathbb E\|A^\beta\xi\|^2<\infty$.  Then, \eqref{E2} possesses a unique local mild solution $X$ in the function space:
\begin{equation*}\label{Ph1}
X\in  \mathcal C^\gamma ([\epsilon,T_{loc}];\mathcal D(A^\eta))\cap \mathcal C([0,T_{loc}];\mathcal D(A^\beta))  \hspace{1cm} \text{a.s.}
\end{equation*}
for any $0<\epsilon<T_{loc}$, $0<\gamma<\min\{\beta+\delta-1,\frac{1}{2}-\eta,\frac{1+2\beta}{4}-\eta\},$
where $T_{loc}$ is some positive constant in $[0,T]$ depending only on the exponents, $\mathbb E \|F_1\|_{\mathcal F^{\beta,\sigma}(E)}^2$, $\mathbb E \|F_2(0)\|^2, $ $\mathbb E \|A^\beta\xi\|^2,$  and  $ \mathbb E \|A^\delta G\|_{\mathcal F^{\beta+\frac{1}{2},\sigma}(\gamma(H;E))}^2.$  
 Furthermore, $X$ satisfies the estimates
\begin{align}
\mathbb E\| A^\beta X(t)\|^2 
\leq & C[\mathbb E\|A^\beta \xi\|^2 + \mathbb E \|F_1\|_{\mathcal F^{\beta,\sigma}(E)}^2] [1+t^{2(1-\eta)}]          \label{Ph2}\\
&+  C\mathbb E \|A^\delta G\|_{\mathcal F^{\beta+\frac{1}{2},\sigma}(\gamma(H;E))}^2 [t^{2\beta}+t^{2(1-\eta)}]    \notag\\
& +C  \mathbb E\|F_2(0)\|^2   t^{2(1-\beta)}, \hspace{1cm} 0\leq t\leq T_{loc},     \notag
\end{align}
and 
\begin{align}
\mathbb E\| A^\eta X(t)\|^2                    
\leq & C\mathbb E\|F_2(0)\|^2 t^{2(1-\eta)}    \label{semilinear evolution equationExpectationAetaXSquare}\\
&+C [ \mathbb E \|A^\beta \xi\|^2  +  \mathbb E\|F_1\|_{\mathcal F^{\beta,\sigma}(E)}^2] [t^{2(\beta-\eta)}+ t^{2(1+\beta-2\eta)} ]    \notag\\
&+ C\Big[\sup_{0\leq t<\infty}\int_0^t e^{-2\nu (t-s)} s^{2\beta-1} ds+t^{2(1+\beta-2\eta)}\Big]    \notag\\
&\times \mathbb E \|A^\delta G\|_{\mathcal F^{\beta+\frac{1}{2},\sigma}(\gamma(H;E))}^2, \hspace{1cm} 0< t\leq T_{loc}  \notag
\end{align}
with some  $C>0$ depending only on the exponents. 
\end{theorem}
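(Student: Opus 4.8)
The plan is to construct the local mild solution of \eqref{E101} by a fixed-point argument in a complete metric space, exactly in the spirit of the deterministic treatment in Yagi \cite{yagi}, but carried out pathwise with uniform-in-$\omega$ estimates so that the time $T_{loc}$ is nonrandom. First I would fix $0<T_{loc}\le T$ (to be chosen small), and introduce the space
\[
\Xi=\Big\{X\in \mathcal C([0,T_{loc}];L^2(\Omega;\mathcal D(A^\beta)))\cap \mathcal C((0,T_{loc}];L^2(\Omega;\mathcal D(A^\eta)))\;:\; \sup_{0\le t\le T_{loc}}\mathbb E\|A^\beta X(t)\|^2+\sup_{0<t\le T_{loc}}t^{2(\eta-\beta)}\mathbb E\|A^\eta X(t)\|^2\le R\Big\}
\]
for a suitable radius $R$, made a complete metric space via the norm $\|X\|_\Xi=\sup_{0\le t\le T_{loc}}\big(\mathbb E\|A^\beta X(t)\|^2\big)^{1/2}+\sup_{0<t\le T_{loc}}t^{\eta-\beta}\big(\mathbb E\|A^\eta X(t)\|^2\big)^{1/2}$. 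On $\Xi$ define the map
\[
(\Phi X)(t)=S(t)\xi+\int_0^t S(t-s)[F_1(s)+F_2(X(s))]ds+\int_0^t S(t-s)G(s)dW(s).
\]

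Next I would show $\Phi$ maps $\Xi$ into itself. The term $S(t)\xi+\int_0^tS(t-s)F_1(s)ds$ is exactly $I_1(t)$ from Theorem~\ref{Th2}, so its $A^\beta$- and $A^\eta$-bounds come from the estimates already proved there (using \eqref{H10}, \eqref{H11}, \eqref{H12} together with the beta-function integrals $\int_0^t(t-s)^{-\eta}s^{\beta-1}ds=B(1-\eta,\beta)t^{\beta-\eta}$, which is finite precisely because $\beta<\eta<1$ and $\beta>0$). The stochastic convolution is handled by Lemma~\ref{Th5} applied with $\theta=\beta$ and $\theta=\eta$: note $\eta<\tfrac12$ guarantees $\mathbb E\|A^\eta\!\int_0^tS(t-s)G(s)dW(s)\|^2\le c(E)\iota_\eta^2 B(2\beta,1-2\eta)\,t^{2(\beta-\eta)}\mathbb E\|A^\delta G\|^2_{\mathcal F}\cdot\|A^{-\delta}\|^2$ is finite. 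For the nonlinear term, using \eqref{H10}, (F2a) and $\|F_2(X(s))\|\le\|F_2(0)\|+c_{F_2}\|A^\eta X(s)\|$,
\[
\mathbb E\Big\|A^\eta\!\!\int_0^t\!\! S(t-s)F_2(X(s))ds\Big\|^2\le 2\iota_\eta^2\Big(\int_0^t(t-s)^{-\eta}\big(\mathbb E\|F_2(0)\|^2\big)^{1/2}ds\Big)^2+2\iota_\eta^2\Big(\int_0^t(t-s)^{-\eta}s^{\beta-\eta}\|X\|_\Xi ds\Big)^2,
\]
where the last integral equals $B(1-\eta,1+\beta-\eta)t^{1+\beta-2\eta}$, finite because $\beta>2\eta-1$. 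These computations yield \eqref{Ph2} and \eqref{semilinear evolution equationExpectationAetaXSquare}, and show $\Phi(\Xi)\subset\Xi$ once $T_{loc}$ is small enough (all the extra powers of $t$ are positive).

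For the contraction, I would estimate, for $X,Y\in\Xi$,
\[
\mathbb E\|A^\beta(\Phi X-\Phi Y)(t)\|^2\le \iota_{\beta-\eta\vee0}^2\Big(\int_0^t(t-s)^{-(\beta-\eta)_+}c_{F_2}\big(\mathbb E\|A^\eta(X(s)-Y(s))\|^2\big)^{1/2}ds\Big)^2,
\]
and similarly for the $A^\eta$-norm, again absorbing the weight $s^{\beta-\eta}$ into a beta integral; since every resulting prefactor carries a strictly positive power of $T_{loc}$, we can shrink $T_{loc}$ (depending only on $c_{F_2}$ and the exponents) to make $\Phi$ a strict contraction on $\Xi$. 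Banach's fixed point theorem then gives a unique $X\in\Xi$ with $\Phi X=X$, i.e.\ a mild solution. The pathwise continuity in $\mathcal D(A^\beta)$ and the H\"older regularity $\mathcal C^\gamma([\epsilon,T_{loc}];\mathcal D(A^\eta))$ are obtained afterwards, \emph{a posteriori}: with $X$ now a fixed function, $F_2(X(\cdot))$ is an $E$-valued process that one checks lies (pathwise) in a space close to $\mathcal F^{\beta',\sigma}$ for suitable $\beta'$ (using (F2a) and the already-established H\"older regularity of $X$ in lower norms), so the deterministic convolution inherits regularity from Theorem~\ref{Th2}/Theorem~\ref{Th3}, and the stochastic convolution's regularity comes from Lemma~\ref{Th5}; the stated range of $\gamma$ is exactly the intersection of the exponents produced by these three contributions ($\beta+\delta-1$ from $AI_2$, $\frac12-\eta$ from $W_\eta$ on $[\epsilon,T]$ when $\beta\ge\frac14$, and $\frac{1+2\beta}{4}-\eta$ from the analogous bound when $\beta<\frac14$).

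The main obstacle is the bookkeeping of the \emph{singular weights}: the $A^\eta$-norm of a mild solution necessarily blows up like $t^{\beta-\eta}$ as $t\to0$ (since $\xi$ is only in $\mathcal D(A^\beta)$), so the underlying space must carry that weight, and one must check that each of the three terms defining $\Phi$ respects it — in particular that the nonlinear self-interaction integral $\int_0^t(t-s)^{-\eta}s^{\beta-\eta}ds$ converges, which forces the constraint $\beta>2\eta-\tfrac12$ (indeed $\beta>2\eta-1$ suffices for convergence, but the sharper threshold enters through the stochastic-convolution Hölder exponent $\frac{1+2\beta}{4}-\eta$ and through requiring the weighted $\mathcal D(A^\eta)$-bound to close). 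Verifying that $F_2(X(\cdot))$ genuinely lands in a weighted Hölder class — needed to promote the mild solution to the claimed $\mathcal C^\gamma$ regularity — is the delicate point, since (F2a) only controls the $A^\eta$-norm of increments of $F_2$, and one must interpolate the available Hölder regularity of $X$ (in $\mathcal D(A^\beta)$, from $\mathcal C^\alpha$, $\alpha<\beta$) against its $\mathcal D(A^\eta)$-bound to recover enough temporal regularity.
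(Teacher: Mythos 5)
Your fixed-point construction is essentially the paper's own: the same weighted norm $\sup_t t^{2(\eta-\beta)}\mathbb E\|A^\eta Y(t)\|^2+\sup_t\mathbb E\|A^\beta Y(t)\|^2$, the same map $\Phi$, the same self-map and contraction estimates via beta-function integrals, with the smallness of $T_{loc}$ depending only on the listed quantities. That part is fine.

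The gap is in your \emph{a posteriori} regularity step. You propose to show that $F_2(X(\cdot))$ lies in a weighted H\"older class $\mathcal F^{\beta',\sigma}$ and then invoke Theorems \ref{Th2}/\ref{Th3}, obtaining the temporal regularity of $F_2(X(\cdot))$ by ``interpolating the available H\"older regularity of $X$ in lower norms against its $\mathcal D(A^\eta)$-bound.'' This is circular: under {\rm (F2a)} the only control on increments is $\|F_2(X(t))-F_2(X(s))\|\le c_{F_2}\|A^\eta(X(t)-X(s))\|$, so placing $F_2(X(\cdot))$ in any H\"older class requires H\"older continuity of $X$ in $\mathcal D(A^\eta)$ --- which is exactly the conclusion you are trying to reach. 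H\"older bounds in $E$ or $\mathcal D(A^\beta)$ give nothing here, and no interpolation is available because the fixed point is not known to lie in any space strictly above $\mathcal D(A^\eta)$. The paper avoids this entirely: it never asks for temporal regularity of $F_2(X(\cdot))$. Writing $X_1(t)-X_1(s)=[S(t-s)-I]X_1(s)+\int_s^t S(t-r)[F_1(r)+F_2(X(r))]\,dr$ and using only the \emph{pointwise} bound $\mathbb E\|F_2(X(r))\|^2\le 2[c_{F_2}^2\kappa^2 r^{2(\beta-\eta)}+\mathbb E\|F_2(0)\|^2]$ (which follows from membership in $\Upsilon(S)$), together with $\|[S(t-s)-I]A^{-\rho}\|\lesssim(t-s)^\rho$ for $\tfrac12<\rho<1-\eta$, it gets $\mathbb E\|A^\eta[X_1(t)-X_1(s)]\|^2\lesssim (t-s)^{2\rho}+(t-s)^{\frac32+\beta-2\eta}+(t-s)^{2(1-\eta)}$ on $[\epsilon,T_{loc}]$ and concludes by the Kolmogorov continuity theorem (Theorem \ref{Th-1}, not the Gaussian version, since $F_2(X)$ is not Gaussian). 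You should replace your bootstrap with this direct increment estimate.

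A related misattribution: the exponent $\frac{1+2\beta}{4}-\eta$ in the admissible range of $\gamma$ does not come from the stochastic convolution in the regime $\beta<\tfrac14$; it comes from the nonlinear convolution term $\int_s^t(t-r)^{-2\eta}r^{2(\beta-\eta)}dr\lesssim t^{\frac12+\beta-2\eta}(t-s)^{\frac12+\beta-2\eta}$ fed into Kolmogorov's theorem with $\epsilon_1=2$. The stochastic convolution contributes only the constraint $\gamma<\beta+\delta-1$, since under {\rm (Ga)} one already has $AI_2\in\mathcal C^\gamma([0,T];E)$ from Step 3 of the proof of Theorem \ref{Th2}, so there is no need to invoke Lemma \ref{Th5} (which is tailored to {\rm (Gb)}) here.
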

\begin{proof}
We  use the fixed point theorem for contractions to prove existence and uniqueness of a local mild solution.

For each $S\in (0,T]$,  set an underlying space as follows. Denote by $\Xi (S)$ the set of all $E$-valued  processes $Y$ on $[0,S]$ such that $Y(0)\in \mathcal D(A^\beta)$ a.s. and 
$$\sup_{0<t\leq S} t^{2(\eta-\beta)} \mathbb E\|A^\eta Y(t)\|^2+ \sup_{0\leq t\leq S}\mathbb E\|A^\beta Y(t)\|^2 <\infty.$$
Up to indistinguishability, $\Xi (S)$  is then a Banach space with  norm
\begin{equation} \label{Ph3}
\|Y\|_{\Xi (S)}=\Big[\sup_{0<t\leq S} t^{2(\eta-\beta)} \mathbb E\|A^\eta Y(t)\|^2+ \sup_{0\leq t\leq S}\mathbb E\|A^\beta Y(t)\|^2\Big]^{\frac{1}{2}}. 
\end{equation}

Let fix  $\kappa>0$  such that 
\begin{equation}  \label{Ph4}
\frac{\kappa^2}{2}> C_1\vee C_2,
\end{equation}
 where  $C_1$ and $C_2$ will be fixed below.
Consider a subset $\Upsilon(S) $ of $\Xi (S)$ which consists of all  processes $Y\in \Xi (S)$  such that
\begin{equation}  \label{Ph5}
\max\left\{\sup_{0<t\leq S} t^{2(\eta-\beta)} \mathbb E\|A^\eta Y(t)\|^2,
 \sup_{0\leq t\leq S}\mathbb E\|A^\beta Y(t)\|^2\right\} \leq \kappa^2.
\end{equation}
Obviously, $\Upsilon(S) $  is a nonempty closed subset of $\Xi (S)$.

For $Y\in \Upsilon(S)$,  define a function $\Phi Y$ on $[0,S]$  by 
\begin{align}  
\Phi Y(t)=&S(t) \xi+\int_0^t S(t-s)[F_1(s)+F_2(Y(s))]ds \label{Ph6} \\
&+ \int_0^t S(t-s)G(s) dW(s).   \notag
\end{align}
Our goal is then to verify that $\Phi$ is a contraction mapping from $\Upsilon(S)$ into itself, provided that $S$ is sufficiently small, and that the fixed point of $\Phi$ is the desired solution of \eqref{E2}. For this purpose, we divide the proof into six steps.

{\bf Step 1}. Let $Y\in \Upsilon(S)$. Let us verify that 
$$ \Phi Y \in \Upsilon(S). $$

 For $\beta\leq \theta < \frac{1}{2}$,   \eqref{Ph5} and   \eqref{Ph6} give
\begin{align*}
&t^{2(\theta-\beta)}\mathbb E\|A^\theta\Phi Y(t)\|^2   \notag\\   
 \leq & 3t^{2(\theta-\beta)}\mathbb E \Big[ \|A^\theta S(t) \xi\|^2+\Big|\Big|\int_0^tA^\theta S(t-s)[F_1(s)+F_2(Y(s))]ds\Big|\Big|^2 \notag\\
&+\Big|\Big|\int_0^tA^\theta S(t-s) G(s) dW(s)\Big|\Big|^2 \Big]  \notag\\
\leq & 3t^{2(\theta-\beta)}  \|A^{\theta-\beta} S(t)\|^2\mathbb E\|A^\beta \xi\|^2\notag\\
&+6t^{2(\theta-\beta)}\mathbb E\Big|\Big|\int_0^t A^\theta S(t-s)F_1(s)ds\Big|\Big|^2 \notag\\
&+6t^{2(\theta-\beta)}\mathbb E\Big|\Big|\int_0^tA^\theta S(t-s)F_2(Y(s))ds\Big|\Big|^2 \notag\\
& +3c(E) t^{2(\theta-\beta)} \int_0^t \mathbb E\|A^\theta S(t-s) G(s)\|_{\gamma(H;E)}^2 ds. \notag
\end{align*}
On  account of \eqref{H12},  \eqref{H10} and \eqref{H11}, 
\begin{align*}
&t^{2(\theta-\beta)}\mathbb E\|A^\theta\Phi Y(t)\|^2   \notag\\
\leq  &  3\iota_{\theta-\beta}^2 \mathbb E\|A^\beta \xi\|^2+6\iota_\theta^2 t^{2(\theta-\beta)} \mathbb E \|F_1\|_{\mathcal F^{\beta,\sigma}(E)}^2 \Big[\int_0^t(t-s)^{-\theta}s^{\beta-1}ds\Big]^2 \notag\\
&+6\iota_\theta^2 t^{2(\theta-\beta)} \mathbb E \Big[\int_0^t (t-s)^{-\theta}\|F_2(Y(s))\|ds\Big]^2\notag\\
&+3c(E) t^{2(\theta-\beta)} \int_0^t\|A^{\theta-\delta}\|^2\|S(t-s)\|^2 \mathbb E\|A^\delta G(s)\|_{\gamma(H;E)}^2 ds \notag\\
\leq  &  3\iota_{\theta-\beta}^2 \mathbb E\|A^\beta \xi\|^2+ 6\iota_\theta^2 t^{2(\theta-\beta)} \mathbb E \|F_1\|_{\mathcal F^{\beta,\sigma}(E)}^2 \Big[\int_0^t(t-s)^{-\theta}s^{\beta-1}ds\Big]^2\notag\\
&+ 6\iota_\theta^2 t^{1+2(\theta-\beta)}   \int_0^t (t-s)^{-2\theta}\mathbb E \|F_2(Y(s))\|^2ds \notag\\
&+3 c(E)\iota_0^2 \|A^{\theta-\delta}\|^2 \mathbb E \|A^\delta G\|_{\mathcal F^{\beta+\frac{1}{2},\sigma}(\gamma(H;E))}^2  t^{2(\theta-\beta)}\int_0^te^{-2\nu(t-s)} s^{2\beta-1} ds  \notag\\
= &  3\iota_{\theta-\beta}^2 \mathbb E\|A^\beta \xi\|^2+6 \iota_\theta^2 \mathbb E \|F_1\|_{\mathcal F^{\beta,\sigma}(E)}^2  B (\beta,1-\theta)^2\notag\\
& +6\iota_\theta^2 t^{1+2(\theta-\beta)}   \int_0^t (t-s)^{-2\theta}\mathbb E \|F_2(Y(s))\|^2ds \notag\\
&+3 c(E)\iota_0^2 \chi(\theta,\beta,\nu) \|A^{\theta-\delta}\|^2  \mathbb E \|A^\delta G\|_{\mathcal F^{\beta+\frac{1}{2},\sigma}(\gamma(H;E))}^2,  \notag
\end{align*}
where 
$$\chi(\theta,\beta,\nu)=\sup_{0\leq t<\infty}  t^{2(\theta-\beta)} \int_0^te^{-2\nu(t-s)} s^{2\beta-1} ds<\infty. $$

On the other hand, due to {\rm (F2a)}  and \eqref{Ph5}, 
\begin{align}
\mathbb E\|F_2(Y(t))\|^2 &\leq \mathbb E[c_{F_2}\|A^\eta Y(t)\|+\|F_2(0)\|]^2\notag \\
&\leq 2[c_{F_2}^2 \mathbb E \|A^\eta Y(t)\|^2 +\mathbb E\|F_2(0)\|^2]\label{Ph7}\\
&\leq 2[c_{F_2}^2 \kappa^2 t^{2(\beta-\eta)}   +\mathbb E\|F_2(0)\|^2], \hspace{1cm} 0<t\leq S.   \label{Ph8}
\end{align}
Thereby,
\begin{align}
t^{2(\theta-\beta)}&\mathbb E\|A^\theta\Phi Y(t)\|^2   \notag\\
\leq &  3\iota_{\theta-\beta}^2 \mathbb E\|A^\beta \xi\|^2+6 \iota_\theta^2 \mathbb E \|F_1\|_{\mathcal F^{\beta,\sigma}(E)}^2  B (\beta, 1-\theta)^2\notag\\
&+12 \iota_\theta^2 t^{1+2(\theta-\beta)}   \int_0^t (t-s)^{-2\theta}[c_{F_2}^2 \kappa^2  s^{2(\beta-\eta)}   +\mathbb E\|F_2(0)\|^2]ds\notag\\
&+3 c(E)\iota_0^2 \chi(\theta,\beta,\nu) \|A^{\theta-\delta}\|^2  \mathbb E \|A^\delta G\|_{\mathcal F^{\beta+\frac{1}{2},\sigma}(\gamma(H;E))}^2  \notag\\
= &  3\iota_{\theta-\beta}^2 \mathbb E\|A^\beta \xi\|^2+6 \iota_\theta^2 \mathbb E \|F_1\|_{\mathcal F^{\beta,\sigma}(E)}^2  B (\beta, 1-\theta)^2 \notag\\
&+12 \iota_\theta^2 c_{F_2}^2 \kappa^2  t^{1+2(\theta-\eta)}\int_0^t (t-s)^{-2\theta} s^{2(\beta-\eta)} ds\notag\\
&+\frac{12 \iota_\theta^2 \mathbb E\|F_2(0)\|^2}{1-2\theta} t^{2(1-\beta)}\notag\\
&+3 c(E)\iota_0^2 \chi(\theta,\beta,\nu) \|A^{\theta-\delta}\|^2  \mathbb E \|A^\delta G\|_{\mathcal F^{\beta+\frac{1}{2},\sigma}(\gamma(H;E))}^2     \notag\\
= &  3\iota_{\theta-\beta}^2 \mathbb E\|A^\beta \xi\|^2+6 \iota_\theta^2 B (\beta, 1-\theta)^2 \mathbb E \|F_1\|_{\mathcal F^{\beta,\sigma}(E)}^2      \label{Ph9} \\
&+3 c(E)\iota_0^2 \chi(\theta,\beta,\nu) \|A^{\theta-\delta}\|^2  \mathbb E \|A^\delta G\|_{\mathcal F^{\beta+\frac{1}{2},\sigma}(\gamma(H;E))}^2      \notag\\
&+ 12\iota_\theta^2 c_{F_2}^2 \kappa^2   B ( 1+2\beta-2\eta, 1-2\theta) t^{2(1+\beta-2\eta)} \notag\\
&+\frac{12 \iota_\theta^2 \mathbb E\|F_2(0)\|^2}{1-2\theta} t^{2(1-\beta)}.   \notag
\end{align}

We apply these estimates with  $\theta=\eta$ and $\theta=\beta$. Put 
\begin{equation} \label{Ph10}
\begin{aligned}
C_1=&3\iota_{\eta-\beta}^2 \mathbb E\|A^\beta \xi\|^2+6 \iota_\eta^2 B (\beta, 1-\eta)^2 \mathbb E \|F_1\|_{\mathcal F^{\beta,\sigma}(E)}^2  \\
&+3 c(E)\iota_0^2 \chi(\eta,\beta,\nu) \|A^{\eta-\delta}\|^2  \mathbb E \|A^\delta G\|_{\mathcal F^{\beta+\frac{1}{2},\sigma}(\gamma(H;E))}^2,\\
C_2=&3\iota_0^2 \mathbb E\|A^\beta \xi\|^2+6 \iota_\beta^2 B (\beta, 1-\beta)^2 \mathbb E \|F_1\|_{\mathcal F^{\beta,\sigma}(E)}^2  \\
&+3 c(E)\iota_0^2 \chi(\beta,\beta,\nu) \|A^{\beta-\delta}\|^2  \mathbb E \|A^\delta G\|_{\mathcal F^{\beta+\frac{1}{2},\sigma}(\gamma(H;E))}^2,
\end{aligned}
\end{equation}
 and take $S$ to be  small enough. By using \eqref{Ph4},  
\begin{align}
t^{2(\eta-\beta)}& \mathbb E\|A^\eta \Phi Y(t)\|^2 \label{Ph11}\\
  \leq & C_1+ 12\iota_\eta^2 c_{F_2}^2 \kappa^2   B ( 1+2\beta-2\eta, 1-2\eta) t^{2(1+\beta-2\eta)} \notag\\
&+\frac{12 \iota_\eta^2 \mathbb E\|F_2(0)\|^2}{1-2\eta} t^{2(1-\beta)}   \notag \\
\leq & \kappa^2,  \hspace{1cm} 0<t\leq S,  \notag
\end{align}
and 
\begin{align}
\mathbb E & \|A^\beta\Phi Y(t)\|^2    \label{Ph12} \\
  \leq& C_2+ 12\iota_\beta^2 c_{F_2}^2 \kappa^2   B ( 1+2\beta-2\eta, 1-2\beta) t^{2(1+\beta-2\eta)}\notag\\
 &+\frac{12 \iota_\beta^2 \mathbb E\|F_2(0)\|^2}{1-2\beta} t^{2(1-\beta)}\notag \\
\leq & \kappa^2,  \hspace{1cm} 0<t\leq S.  \notag
\end{align}
 We have thus shown that
 \begin{equation*} 
\max\left\{\sup_{0<t\leq S} t^{2(\eta-\beta)} \mathbb E\|A^\eta \Phi Y(t)\|^2,
 \sup_{0\leq t\leq S}\mathbb E\|A^\beta \Phi Y(t)\|^2\right\} \leq \kappa^2.
\end{equation*}
  In addition, it is clear that $\Phi Y(0)=\xi\in \mathcal D(A^\beta)$ a.s. Hence,
$$ \Phi Y \in \Upsilon(S).$$

{\bf Step 2}. Let us show that $\Phi$ is a contraction mapping, provided $S>0$ is sufficiently small. Let $Y_1,Y_2\in \Upsilon (S)$ and $0\leq \theta < \frac{1}{2}.$  It follows from  \eqref{H10} and  \eqref{Ph6}  that
\begin{align*}
t^{2(\theta-\beta)}&\mathbb E\|A^\theta[\Phi Y_1(t)-\Phi Y_2(t)]\|^2  \notag \\
=&t^{2(\theta-\beta)} \mathbb E\Big|\Big|\int_0^t A^\theta S(t-s)[F_2(Y_1(s))-F_2(Y_2(s))]ds\Big|\Big|^2 \notag\\
\leq&t^{2(\theta-\beta)} \mathbb E\Big[\int_0^t \|A^\theta S(t-s)\| \|F_2(Y_1(s))-F_2(Y_2(s))\|ds\Big]^2 \notag \\
\leq& \iota_\theta^2 t^{2(\theta-\beta)} \mathbb E\Big[\int_0^t (t-s)^{-\theta} \|F_2(Y_1(s))-F_2(Y_2(s))\|ds\Big]^2. \notag
\end{align*}
Thanks to  {\rm (F2a)} and  \eqref{Ph3}, 
\begin{align*}
t^{2(\theta-\beta)}& \mathbb E\|A^\theta[\Phi Y_1(t)-\Phi Y_2(t)]\|^2  \notag \\
\leq& c_{F_2}^2\iota_\theta^2 t^{2(\theta-\beta)} \mathbb E\Big[\int_0^t (t-s)^{-\theta}   \|A^\eta(Y_1(s)-Y_2(s))\|ds\Big]^2 \notag\\
\leq& c_{F_2}^2\iota_\theta^2 t^{1+2(\theta-\beta)} \mathbb E\int_0^t (t-s)^{-2\theta}   \|A^\eta(Y_1(s)-Y_2(s))\|^2ds \notag\\
=& c_{F_2}^2\iota_\theta^2 t^{1+2(\theta-\beta)} \int_0^t (t-s)^{-2\theta}  \mathbb E \|A^\eta(Y_1(s)-Y_2(s))\|^2ds \notag\\
\leq&c_{F_2}^2\iota_\theta^2 t^{1+2(\theta-\beta)} \int_0^t  (t-s)^{-2\theta} s^{2(\beta-\eta)} \|Y_1-Y_2\|_{{\Xi (S)}}^2ds \notag\\
=&c_{F_2}^2\iota_\theta^2   B (1+2\beta-2\eta,1-2\theta) t^{2(1-\eta)}   \|Y_1-Y_2\|_{{\Xi (S)}}^2 \notag\\
\leq &c_{F_2}^2  \iota_\theta^2 B (1+2\beta-2\eta,1-2\theta)   S^{2(1-\eta)} \|Y_1-Y_2\|_{{\Xi (S)}}^2. \notag
\end{align*}
Applying these estimates with $\theta=\eta$ and $\theta=\beta$, we conclude that
\begin{align}
\|\Phi & Y_1-\Phi Y_2\|_{{\Xi (S)}}^2 \notag\\
=&\sup_{0<t\leq S} t^{2(\eta-\beta)} \mathbb E\|A^\eta [\Phi Y_1(t)-\Phi Y_2(t)]\|^2\notag\\
&+ \sup_{0\leq t\leq S}\mathbb E\|A^\beta  [\Phi Y_1(t)-\Phi Y_2(t)]\|^2\notag\\
\leq &c_{F_2}^2  [\iota_\eta^2 B (1+2\beta-2\eta,1-2\eta)+\iota_\beta^2 B (1+2\beta-2\eta,1-2\beta)]                  \label{Ph13}\\
& \times S^{2(1-\eta)} \|Y_1-Y_2\|_{{\Xi (S)}}^2.   \notag
\end{align}
Therefore, $\Phi$ is contraction on $\Upsilon (S),$ provided $S>0$ is sufficiently small.

{\bf Step 3}.  Let  
\begin{equation} \label{E102}
 0<\gamma<\min\left\{\beta+\delta-1,\frac{1}{2}-\eta,\frac{1+2\beta}{4}-\eta\right\}.
\end{equation} 
Let us prove existence of a  mild solution $X$ to \eqref{E2} on $[0,S]$ in the function space
\begin{equation}  \label{H25.5}
X\in  \mathcal C^\gamma([\epsilon,S];\mathcal D(A^\eta))\cap \mathcal C([0,S];\mathcal D(A^\beta)) \hspace{1cm} \text{ a.s.}
\end{equation}
for any $0<\epsilon<S$, provided that $S$ is sufficiently small.

Let $S>0$ be sufficiently small in such a way that $\Phi$ maps $\Upsilon(S)$ into itself and is contraction with respect to the norm of $\Xi (S).$ 
Due to Step 1 and  Step 2, $S=T_{loc}$ can be determined by the exponents, $\mathbb E \|A^\beta\xi\|^2,$ $\mathbb E\|F_2(0)\|^2, \mathbb E \|F_1\|_{\mathcal F^{\beta,\sigma}(E)}^2$ and $ \mathbb E \|A^\delta G\|_{\mathcal F^{\beta+\frac{1}{2},\sigma}(\gamma(H;E))}^2$.
 Thanks to the fixed point theorem, there exists  $X\in \Upsilon(T_{loc})$ such that $X=\Phi X$.

It therefore suffices to prove that $X$ satisfies  \eqref{H25.5}. For this purpose, we divide $ X$ into two parts: $ X(t)=X_1(t)+ I_2(t),$
where 
\begin{equation}  \label{H26.5}
{X_1}(t)=S(t) \xi+\int_0^t S(t-s)[F_1(s)+F_2(X(s))]ds,
\end{equation}
and $I_2$ is defined by \eqref{Pt10}.

As seen by Step 3 of the proof of Theorem \ref{Th2}, 
$$
I_2(t)+ \int_0^t AI_2(s)ds=\int_0^t G(s)dW(s), \hspace{1cm}  0\leq t\leq T_{loc} $$
and $
I_2 \in \mathcal C^{\gamma}([0,T_{loc}];\mathcal D(A))$ a.s.

Let $0<\epsilon<S.$ 
Since 
$$\mathcal C^{\gamma}([0,T_{loc}];\mathcal D(A)) \subset \mathcal C^\gamma([\epsilon,T_{loc}];\mathcal D(A^\eta))\cap \mathcal C([0,T_{loc}];\mathcal D(A^\beta)), $$  
  what we need is  to prove that  
\begin{equation}   \label{H27.4}
X_1\in  \mathcal C^\gamma([\epsilon,T_{loc}];\mathcal D(A^\eta))\cap \mathcal C([0,T_{loc}];\mathcal D(A^\beta))     \hspace{1cm} \text{ a.s.  }
\end{equation}

To this end, we  use the Kolmogorov continuity theorem. 
For $0<s<t\leq T_{loc}$, by the semigroup property, 
\begin{align}
X_1& (t)-X_1(s)     \label{H27.5}\\
=&S(t-s)S(s)\xi+ S(t-s)\int_0^s S(s-r)[F_1(r)+F_2(X(r))]dr   \notag\\
& -X_1(s)+\int_s^t S(t-r)[F_1(r)+F_2(X(r))]dr     \notag\\
=&[S(t-s)-I]X_1(s)+\int_s^t S(t-r)[F_1(r)+F_2(X(r))]dr.       \notag
\end{align}
Let $\frac{1}{2}< \rho <  1-\eta$. Due to  \eqref{H12},  \eqref{H10},  \eqref{H11.5}  and  \eqref{H27.5}, 
\begin{align*}
\|A^\eta&[X_1(t)-X_1(s)]\|  \notag\\
\leq &\|[S(t-s)-I]A^{-\rho}\|  \|A^{\eta+\rho}X_1(s)\|\notag\\
&+\int_s^t \|A^\eta S(t-r)\| [\|F_1(r)\|+\|F_2(X(r))\|]dr \notag\\
\leq &\frac{\iota_{1-\rho}(t-s)^\rho}{\rho} \Big \|A^{\eta+\rho}\Big[S(s) \xi+\int_0^s S(s-r)[F_1(r)+F_2(X(r))]dr\Big]\Big \| \notag \\
&+\iota_\eta\int_s^t (t-r)^{-\eta} [\|F_1(r)\|+\|F_2(X(r))\|]dr \notag\\
\leq &\frac{\iota_{1-\rho}(t-s)^\rho}{\rho}  \|A^{\eta+\rho-\beta}S(s)\| \|A^\beta  \xi\|\notag\\
&+\frac{\iota_{1-\rho}(t-s)^\rho}{\rho} \int_0^s \|A^{\eta+\rho} S(s-r)\| \|F_1(r)\|dr\notag\\
&+\frac{\iota_{1-\rho}(t-s)^\rho}{\rho} \int_0^s \|A^{\eta+\rho} S(s-r)\| \|F_2(X(r))\|dr \notag \\
&+\iota_\eta\int_s^t (t-r)^{-\eta} \|F_1(r)\|dr  +\iota_\eta\int_s^t (t-r)^{-\eta} \|F_2(X(r))\|dr \notag\\
\leq &\frac{\iota_{1-\rho}\iota_{\eta+\rho-\beta}(t-s)^\rho}{\rho}  s^{-\eta-\rho+\beta} \|A^\beta  \xi\|\notag\\
&+\frac{\iota_{1-\rho}\iota_{\eta+\rho}  \|F_1\|_{\mathcal F^{\beta,\sigma}(E)}(t-s)^\rho}{\rho} \int_0^s (s-r)^{-\eta-\rho}   r^{\beta-1} dr \notag \\
&+\frac{\iota_{1-\rho}\iota_{\eta+\rho} (t-s)^\rho}{\rho}\int_0^s  (s-r)^{-\eta-\rho} \|F_2(X(r))\|dr \notag\\
&+\iota_\eta \|F_1\|_{\mathcal F^{\beta,\sigma}(E)}\int_s^t (t-r)^{-\eta} r^{\beta-1} dr \notag\\
& +\iota_\eta\int_s^t (t-r)^{-\eta} \|F_2(X(r))\|dr\notag \\
= &\frac{\iota_{1-\rho} \iota_{\eta+\rho-\beta}}{\rho}\|A^\beta  \xi\| s^{\beta-\eta-\rho}(t-s)^\rho \notag\\
&+\frac{\iota_{1-\rho} \iota_{\eta+\rho} \|F_1\|_{\mathcal F^{\beta,\sigma}(E)} B (\beta,1-\eta-\rho)}{\rho}  s^{\beta-\eta-\rho}(t-s)^\rho \notag \\
&+\iota_\eta \|F_1\|_{\mathcal F^{\beta,\sigma}(E)}\int_s^t (t-r)^{-\eta}r^{\beta-1}dr \notag\\
&+\frac{\iota_{1-\rho}\iota_{\eta+\rho} (t-s)^\rho}{\rho}\int_0^s  (s-r)^{-\eta-\rho} \|F_2(X(r))\|dr \notag\\
& +\iota_\eta\int_s^t (t-r)^{-\eta} \|F_2(X(r))\|dr.\notag 
\end{align*}
Dividing $\beta-1$ as $\beta-1=(\eta+\rho-1)+(\beta-\eta-\rho),$ it is seen that
\begin{align*}
\int_s^t (t-r)^{-\eta}  r^{\beta-1} dr &\leq \int_s^t (t-r)^{-\eta}  (r-s)^{\eta+\rho-1} s^{\beta-\eta-\rho} dr\\
&=  B (\eta+\rho,1-\eta) s^{\beta-\eta-\rho}(t-s)^\rho.
\end{align*}
Hence, 
\begin{align*}
\|A^\eta &[X_1(t)-X_1(s)]\|  \notag\\
\leq  &\frac{\iota_{1-\rho}\iota_{\eta+\rho-\beta}}{\rho} \|A^\beta  \xi\| s^{\beta-\eta-\rho}(t-s)^\rho \notag\\
&+\Big[\frac{\iota_{1-\rho} \iota_{\eta+\rho}  B (\beta,1-\eta-\rho)}{\rho} +\iota_\eta B (\eta+\rho,1-\eta)\Big]\notag\\
&  \times  \|F_1\|_{\mathcal F^{\beta,\sigma}(E)}s^{\beta-\eta-\rho}(t-s)^\rho \notag\\
&+\frac{\iota_{1-\rho}\iota_{\eta+\rho}}{\rho} (t-s)^\rho \int_0^s  (s-r)^{-\eta-\rho} \|F_2(X(r))\|dr  \notag\\
&+\iota_\eta\int_s^t (t-r)^{-\eta} \|F_2(X(r))\|dr, \hspace{1cm} 0<s<t\leq T_{loc}.\notag 
\end{align*}

Taking expectation of the square of the both hand sides of the above inequality, it follows that 
\begin{align*}
\mathbb E&\|A^\eta[X_1(t)-X_1(s)]\|^2 \notag\\
\leq &\frac{4\iota_{1-\rho}^2\iota_{\eta+\rho-\beta}^2}{\rho^2}  \mathbb E\|A^\beta  \xi\|^2 s^{2(\beta-\eta-\rho)}(t-s)^{2\rho} \notag\\
& +4\Big[\frac{\iota_{1-\rho}\iota_{\eta+\rho}  B (\beta,1-\eta-\rho)}{\rho} +\iota_\eta B (\eta+\rho,1-\eta)\Big]^2\notag\\
&\times \mathbb E \|F_1\|_{\mathcal F^{\beta,\sigma}(E)}^2s^{2(\beta-\eta-\rho)}(t-s)^{2\rho} \notag\\
&+\frac{4\iota_{1-\rho}^2\iota_{\eta+\rho}^2}{\rho^2} (t-s)^{2\rho}\mathbb E\Big[ \int_0^s  (s-r)^{-\eta-\rho} \|F_2(X(r))\|dr\Big]^2  \notag\\
&+4\iota_\eta^2\mathbb E\Big[\int_s^t (t-r)^{-\eta} \|F_2(X(r))\|dr\Big]^2, \hspace{1cm} 0<s<t\leq T_{loc}.\notag
\end{align*}

Since 
\begin{align*}
&\Big[ \int_0^s  (s-r)^{-\eta-\rho} \|F_2(X(r))\|dr\Big]^2  \\
&=\Big[ \int_0^s  (s-r)^{\frac{-\eta-\rho}{2}} (s-r)^{\frac{-\eta-\rho}{2}}\|F_2(X(r))\|dr\Big]^2  \\
&\leq \int_0^s  (s-r)^{-\eta-\rho}dr \int_0^s (s-r)^{-\eta-\rho}\|F_2(X(r))\|^2dr\\
&=\frac{s^{1-\eta-\rho}}{1-\eta-\rho} \int_0^s (s-r)^{-\eta-\rho}\|F_2(X(r))\|^2dr
\end{align*}
and
\begin{align*}
\Big[ \int_s^t  (t-r)^{-\eta} \|F_2(X(r))\|dr\Big]^2  \leq (t-s) \int_s^t  (t-r)^{-2\eta}\|F_2(X(r))\|^2 dr,
\end{align*}
we have
\begin{align}
\mathbb E&\|A^\eta[X_1(t)-X_1(s)]\|^2       \label{Ph14}\\
\leq &\frac{4\iota_{1-\rho}^2  \iota_{\eta+\rho-\beta}^2}{\rho^2} \mathbb E\|A^\beta  \xi\|^2 s^{2(\beta-\eta-\rho)}(t-s)^{2\rho} \notag\\
& +4\Big[\frac{\iota_{1-\rho}\iota_{\eta+\rho}  B (\beta,1-\eta-\rho)}{\rho}  +\iota_\eta B (\eta+\rho,1-\eta)\Big]^2\notag\\
&\times \mathbb E \|F_1\|_{\mathcal F^{\beta,\sigma}(E)}^2s^{2(\beta-\eta-\rho)}(t-s)^{2\rho} \notag\\
&+\frac{4\iota_{1-\rho}^2\iota_{\eta+\rho}^2}{\rho^2(1-\eta-\rho)} (t-s)^{2\rho} s^{1-\eta-\rho}     \int_0^s  (s-r)^{-\eta-\rho} \mathbb E\|F_2(X(r))\|^2dr  \notag\\
&+4\iota_\eta^2(t-s) \int_s^t (t-r)^{-2\eta}\mathbb E \|F_2(X(r))\|^2dr.  \notag
\end{align}

Two integrals in the right-hand side of  \eqref{Ph14} can be estimated as follows.
Thanks to  \eqref{Ph8},   
\begin{align}
 \int_0^s & (s-r)^{-\eta-\rho} \mathbb E\|F_2(X(r))\|^2dr \notag\\
 \leq &
2 \int_0^s  (s-r)^{-\eta-\rho} [c_{F_2}^2 \kappa^2 r^{2(\beta-\eta)} +\mathbb E\|F_2(0)\|^2] dr  \notag\\
 = &
2c_{F_2}^2 \kappa^2   B (1+2\beta-2\eta,1-\eta-\rho) s^{1+2\beta-3\eta-\rho}  \label{Ph15}\\
&+\frac{2\mathbb E\|F_2(0)\|^2 s^{1-\eta-\rho}}{1-\eta-\rho}, \notag
\end{align}
and
\begin{align}
&\int_s^t (t-r)^{-2\eta}\mathbb E \|F_2(X(r))\|^2dr\notag\\
&\leq 2\int_s^t (t-r)^{-2\eta}[c_{F_2}^2 \kappa^2  r^{2(\beta-\eta)} + \mathbb E\|F_2(0)\|^2]dr\notag\\
&=2 c_{F_2}^2 \kappa^2  \int_s^t (t-r)^{-2\eta}r^{2(\beta-\eta)} dr+\frac{2\mathbb E\|F_2(0)\|^2}{1-2\eta} (t-s)^{1-2\eta}.\label{Ph16}
\end{align}
The latter integral can be evaluated by dividing $2(\beta-\eta)$ as $2(\beta-\eta)=(\beta-\frac{1}{2})+(\frac{1}{2}+\beta-2\eta):$ 
\begin{align}
\int_s^t (t-r)^{-2\eta} r^{2(\beta-\eta)} dr&\leq \int_s^t (t-r)^{-2\eta} (r-s)^{\beta-\frac{1}{2}} t^{\frac{1}{2}+\beta-2\eta}dr\notag\\
&= B (\frac{1}{2}+\beta,1-2\eta) t^{\frac{1}{2}+\beta-2\eta} (t-s)^{\frac{1}{2}+\beta-2\eta}.  \label{Ph17}
\end{align}
By virtue of  \eqref{Ph14},  \eqref{Ph15},  \eqref{Ph16} and  \eqref{Ph17},  
\begin{align*}
\mathbb E&\|A^\eta[X_1(t)-X_1(s)]\|^2 \notag\\
\leq  &\frac{4\iota_{1-\rho}^2  \iota_{\eta+\rho-\beta}^2}{\rho^2} \mathbb E\|A^\beta  \xi\|^2 s^{2(\beta-\eta-\rho)}(t-s)^{2\rho} \notag\\
& +4\Big[\frac{\iota_{1-\rho}\iota_{\eta+\rho}  B (\beta,1-\eta-\rho)}{\rho}  +\iota_\eta B (\eta+\rho,1-\eta)\Big]^2\notag\\
&\times \mathbb E \|F_1\|_{\mathcal F^{\beta,\sigma}(E)}^2s^{2(\beta-\eta-\rho)}(t-s)^{2\rho}  \notag\\
&+\frac{8\iota_{1-\rho}^2\iota_{\eta+\rho}^2c_{F_2}^2 \kappa^2   B (1+2\beta-2\eta,1-\eta-\rho)}{\rho^2(1-\eta-\rho)} (t-s)^{2\rho} s^{2(1+\beta-2\eta-\rho)}  \notag\\
&+\frac{8\iota_{1-\rho}^2\iota_{\eta+\rho}^2\mathbb E\|F_2(0)\|^2}{\rho^2(1-\eta-\rho)^2} (t-s)^{2\rho} s^{2(1-\eta-\rho)}  \notag\\
&+8 \iota_\eta^2c_{F_2}^2 \kappa^2   B (\frac{1}{2}+\beta,1-2\eta) t^{\frac{1}{2}+\beta-2\eta} (t-s)^{\frac{3}{2}+\beta-2\eta}\notag\\
&+\frac{8\iota_\eta^2\mathbb E\|F_2(0)\|^2}{1-2\eta} (t-s)^{2(1-\eta)}, \hspace{1cm} 0<s<t\leq T_{loc}.
\end{align*}
In particular, 
\begin{align*}
\mathbb E&\|A^\eta[X_1(t)-X_1(s)]\|^2 \notag\\
\leq  &\frac{4\iota_{1-\rho}^2  \iota_{\eta+\rho-\beta}^2}{\rho^2} \mathbb E\|A^\beta  \xi\|^2 \epsilon^{2(\beta-\eta-\rho)}(t-s)^{2\rho} \notag\\
& +4\Big[\frac{\iota_{1-\rho}\iota_{\eta+\rho}  B (\beta,1-\eta-\rho)}{\rho}  +\iota_\eta B (\eta+\rho,1-\eta)\Big]^2\notag\\
&\times \mathbb E \|F_1\|_{\mathcal F^{\beta,\sigma}(E)}^2\epsilon^{2(\beta-\eta-\rho)}(t-s)^{2\rho}  \notag\\
&+\frac{8\iota_{1-\rho}^2\iota_{\eta+\rho}^2c_{F_2}^2 \kappa^2   B (1+2\beta-2\eta,1-\eta-\rho)}{\rho^2(1-\eta-\rho)} \notag\\
&\times\max\{\epsilon^{2(1+\beta-2\eta-\rho)}, T_{loc}^{2(1+\beta-2\eta-\rho)} \}(t-s)^{2\rho}   \notag\\
&+\frac{8\iota_{1-\rho}^2\iota_{\eta+\rho}^2\mathbb E\|F_2(0)\|^2}{\rho^2(1-\eta-\rho)^2} T_{loc}^{2(1-\eta-\rho)} (t-s)^{2\rho}   \notag\\
&+8 \iota_\eta^2c_{F_2}^2 \kappa^2   B (\frac{1}{2}+\beta,1-2\eta) T_{loc}^{\frac{1}{2}+\beta-2\eta} (t-s)^{\frac{3}{2}+\beta-2\eta}\notag\\
&+\frac{8\iota_\eta^2\mathbb E\|F_2(0)\|^2}{1-2\eta} (t-s)^{2(1-\eta)}, \hspace{1cm} \epsilon \leq s<t\leq T_{loc}.
\end{align*}

Since this estimate holds true for any $\frac{1}{2}<\rho < 1-\eta,$ and since  $1<\frac{3}{2}+\beta-2\eta<2(1-\eta)$, Theorem \ref{Th-1}  provides that 
for $0<\alpha<\min\{\frac{1}{2}-\eta,\frac{1+2\beta}{4}-\eta\}$
$$X_1\in \mathcal C^\alpha ([\epsilon,T_{loc}];\mathcal D(A^\eta)) \hspace{1cm} \text{a.s.}$$
As a consequence, by \eqref{E102},  
  \begin{equation}   \label{H28.5}
X_1\in \mathcal C^\gamma ([\epsilon,T_{loc}];\mathcal D(A^\eta)) \hspace{1cm} \text{a.s.}
\end{equation}
Since $A^\beta X_1=A^{\beta-\eta} A^\eta X_1$, $A^\beta X_1$ is continuous on $(0,T_{loc}]$.

It remains to prove  that $A^\beta X_1$ is continuous at $t=0$. We already know by Step 1 of the proof of Theorem \ref{Th3} that the process 
$$A^\beta S(\cdot) \xi+\int_0^\cdot S(\cdot-s)F_1(s)ds$$
is continuous at $t=0.$  Meanwhile,   \eqref{H10} and \eqref{Ph8} give
\begin{align} 
\mathbb E&\Big \|A^\beta \int_0^t S(t-s)F_2(X(s))ds\Big \|^2 \notag\\
 & \leq\mathbb E\Big[\int_0^t \|A^\beta S(t-s)\| \|F_2(X(s))\|ds\Big]^2\notag\\
 &\leq \iota_\beta^2 \mathbb E  \Big[\int_0^t (t-s)^{-\beta}\|F_2(X(s))\| ds\Big]^2\notag\\
 &\leq \iota_\beta^2 t   \int_0^t (t-s)^{-2\beta}\mathbb E\|F_2(X(s))\|^2 ds\notag\\
 &\leq 2\iota_\beta^2 t   \int_0^t (t-s)^{-2\beta}[c_{F_2}^2 \kappa^2  s^{2(\beta-\eta)} + \mathbb E\|F_2(0)\|^2] ds\notag\\
 &=  2\iota_\beta^2  c_{F_2}^2 \kappa^2   B (1+2\beta-2\eta,1-2\beta)t^{2(1-\eta)}      \label{Ph18} \\
& \quad  +\frac{2\iota_\beta^2 \mathbb E\|F_2(0)\|^2}{1-2\beta} t^{2(1-\beta)}   \to 0 \hspace{1cm} \text{as} \quad  t\to 0. \notag
\end{align}
Therefore, there exists a decreasing sequence $\{t_n, n=1,2,3,\dots\}$ converging to $0$ such that 
$$\lim_{n\to\infty} A^\beta \int_0^{t_n} S(t_n-s)F_2(X(s))ds=0 \hspace{1cm} \text{a.s.}$$
By the continuity of $A^\beta \int_0^\cdot S(\cdot-s)F_2(X(s))ds$ on $(0,T_{loc}]$, it implies that 
$$\lim_{t\to 0} A^\beta \int_0^t S(t-s)F_2(X(s))ds=0 \hspace{1cm} \text{a.s.,}$$
i.e. $A^\beta \int_0^\cdot S(\cdot-s)F_2(X(s))ds$ is continuous at $t=0$.
In this way,  we conclude that
\begin{align*}
A^\beta X_1=&A^\beta\Big[S(\cdot) \xi+\int_0^\cdot S(\cdot-s)F_1(s)ds \Big]\\
&+A^\beta \int_0^\cdot S(\cdot-s)F_2(X(s))ds
\end{align*}
is continuous at $t=0.$

{\bf Step 4}. Let us prove the estimate \eqref{Ph2}.
We have
\begin{align}
A^\beta X(t)=&A^\beta \Big[S(t) \xi+\int_0^t S(t-s)F_1(s)ds + \int_0^t S(t-s)G(s) dW(s) \Big] \notag\\
&+A^\beta \int_0^t S(t-s)F_2(X(s))ds        \notag \\
=&A^\beta X_3(t)+A^\beta X_4(t), \hspace{1cm} 0\leq t\leq T_{loc}.       \label{Ph19} 
\end{align}
On  account of  Theorem \ref{Th3}, it is easily seen that 
\begin{align*} 
\mathbb E \|A^\beta X_3(t)\|^2 \leq & \rho_1 [\mathbb E\|A^\beta \xi\|^2 + \mathbb E \|F_1\|_{\mathcal F^{\beta,\sigma}(E)}^2\\
&+  \mathbb E \|A^\delta G\|_{\mathcal F^{\beta+\frac{1}{2},\sigma}(\gamma(H;E))}^2 t^{2\beta}], \hspace{1cm} 0\leq t\leq T_{loc},
\end{align*}
where $\rho_1$ is a positive constant depending only on the exponents.
Meanwhile,     \eqref{Ph10} and  \eqref{Ph18} give that there exists $\rho_2>0$ depending only on the exponents such that 
\begin{align*}
\mathbb E\| A^\beta X_4(t)\|^2
\leq & \rho_2 [
\mathbb E\|A^\beta \xi\|^2+ \mathbb E \|F_1\|_{\mathcal F^{\beta,\sigma}(E)}^2  + \mathbb E \|A^\delta G\|_{\mathcal F^{\beta+\frac{1}{2},\sigma}(\gamma(H;E))}^2
]
t^{2(1-\eta)}\\
& +\rho_2  \mathbb E\|F_2(0)\|^2   t^{2(1-\beta)}, \hspace{1cm} 0\leq t\leq T_{loc}.
\end{align*}
Thus,
\begin{align*}
\mathbb E\| A^\beta X(t)\|^2 
\leq & 2\mathbb E\| A^\beta X_3(t)\|^2
+2 \mathbb E\| A^\beta X_4(t)\|^2\\
\leq & C[\mathbb E\|A^\beta \xi\|^2 + \mathbb E \|F_1\|_{\mathcal F^{\beta,\sigma}(E)}^2] [1+t^{2(1-\eta)}]\\
&+ C \mathbb E \|A^\delta G\|_{\mathcal F^{\beta+\frac{1}{2},\sigma}(\gamma(H;E))}^2 [t^{2\beta}+t^{2(1-\eta)}]\\
& +C  \mathbb E\|F_2(0)\|^2   t^{2(1-\beta)}, \hspace{1cm} 0\leq t\leq T_{loc},
\end{align*}
with some $C>0$ depending only on the exponents. 

{\bf Step 5}. Let us prove the estimate  \eqref{semilinear evolution equationExpectationAetaXSquare}. By using   \eqref{H12},  \eqref{H10},   \eqref{H11} and \eqref{Ph4}, 
\begin{align*}
\mathbb E&\| A^\eta X(t)\|^2\notag\\
=&\mathbb E  \Big \| A^\eta S(t) \xi+\int_0^t A^\eta S(t-s)F_1(s)ds + \int_0^t A^\eta S(t-s)G(s) dW(s) \notag\\
&+ \int_0^t A^\eta S(t-s)F_2(X(s))ds\Big \|^2        \notag \\
\leq & 4\mathbb E \|A^\eta S(t) \xi\|^2+4\mathbb E   \Big[\int_0^t \|A^\eta S(t-s)\| \|F_1(s)\|ds\Big]^2 \notag\\
&+4\mathbb E\Big[\int_0^t \|A^\eta S(t-s)\| \|F_2(X(s))\|ds\Big]^2\notag\\
&+ 4 c(E) \int_0^t \mathbb E  \|A^{\eta-\delta} S(t-s)\|^2 \|A^\delta G(s)\|_{\gamma(H;E)}^2 ds\notag\\
\leq & 4\mathbb E \|A^{\eta-\beta} S(t) A^\beta \xi\|^2  +4 \iota_\eta^2 \mathbb E\|F_1\|_{\mathcal F^{\beta,\sigma}(E)}^2 \Big[\int_0^t (t-s)^{-\eta} s^{\beta-1} ds\Big]^2  \\
&+ 4 \iota_\eta^2  \mathbb E \Big[\int_0^t (t-s)^{-\eta} \|F_2(X(s))\|ds\Big]^2  \\
&+ 4c(E) \iota_0^2 \|A^{\eta-\delta} \|^2 \mathbb E \|A^\delta G\|_{\mathcal F^{\beta+\frac{1}{2},\sigma}(\gamma(H;E))}^2  \int_0^t e^{-2\nu (t-s)} s^{2\beta-1} ds \\
\leq & 4 \iota_{\eta-\beta}^2 \mathbb E \|A^\beta \xi\|^2 t^{-2(\eta-\beta)} +4 \iota_\eta^2 B(\beta,1-\eta) \mathbb E\|F_1\|_{\mathcal F^{\beta,\sigma}(E)}^2 t^{2(\beta-\eta)}   \\
&+ 4 \iota_\eta^2  t  \int_0^t (t-s)^{-2\eta} \mathbb E\|F_2(X(s))\|^2ds  \\
&+ 4c(E) \iota_0^2 \|A^{\eta-\delta} \|^2 \mathbb E \|A^\delta G\|_{\mathcal F^{\beta+\frac{1}{2},\sigma}(\gamma(H;E))}^2 \\
& \times \sup_{0\leq t<\infty}\int_0^t e^{-2\nu (t-s)} s^{2\beta-1} ds, \hspace{1cm} 0<t\leq T_{loc}.
\end{align*}
On account of \eqref{Ph8}, 
\begin{align*}
4 \iota_\eta^2 & t  \int_0^t (t-s)^{-2\eta} \mathbb E\|F_2(X(s))\|^2ds \\
\leq & 8\iota_\eta^2  t  \int_0^t (t-s)^{-2\eta} [c_{F_2}^2 \kappa^2  s^{2(\beta-\eta)}   +\mathbb E\|F_2(0)\|^2]ds\\
= & 8\iota_\eta^2 \Big[c_{F_2}^2 \kappa^2   B (1+2\beta-2\eta,1-2\eta)t^{2(1+\beta-2\eta)} +\frac{\mathbb E\|F_2(0)\|^2}{1-2\eta} t^{2(1-\eta)}\Big].
\end{align*}
Therefore,
\begin{align*}
\mathbb E&\| A^\eta X(t)\|^2\notag\\
\leq & 4 [\iota_{\eta-\beta}^2 \mathbb E \|A^\beta \xi\|^2  + \iota_\eta^2 B(\beta,1-\eta) \mathbb E\|F_1\|_{\mathcal F^{\beta,\sigma}(E)}^2] t^{2(\beta-\eta)}   \\
&+ 8\iota_\eta^2 \Big[c_{F_2}^2 \kappa^2   B (1+2\beta-2\eta,1-2\eta)t^{2(1+\beta-2\eta)} +\frac{\mathbb E\|F_2(0)\|^2}{1-2\eta} t^{2(1-\eta)}\Big]  \\
&+ 4c(E) \iota_0^2 \|A^{\eta-\delta} \|^2 \mathbb E \|A^\delta G\|_{\mathcal F^{\beta+\frac{1}{2},\sigma}(\gamma(H;E))}^2 \\
& \times \sup_{0\leq t<\infty}\int_0^t e^{-2\nu (t-s)} s^{2\beta-1} ds, \hspace{1cm} 0<t\leq T_{loc}.
\end{align*}
In view of  \eqref{Ph4} and \eqref{Ph10}, it is easily seen that there exists $C>0$  depending only on the exponents such that 
\begin{align*}
\mathbb E&\| A^\eta X(t)\|^2\notag\\
\leq & C [ \mathbb E \|A^\beta \xi\|^2  +  \mathbb E\|F_1\|_{\mathcal F^{\beta,\sigma}(E)}^2] [t^{2(\beta-\eta)}+ t^{2(1+\beta-2\eta)} ]+ C\mathbb E\|F_2(0)\|^2 t^{2(1-\eta)}  \\
&+ C[\sup_{0\leq t<\infty}\int_0^t e^{-2\nu (t-s)} s^{2\beta-1} ds+t^{2(1+\beta-2\eta)}] \\
& \times \mathbb E \|A^\delta G\|_{\mathcal F^{\beta+\frac{1}{2},\sigma}(\gamma(H;E))}^2, \hspace{1cm} 0<t\leq T_{loc}.
\end{align*}
Thus, \eqref{semilinear evolution equationExpectationAetaXSquare} has been verified.

{\bf Step 6}.  Let us  prove uniqueness of   local mild solutions. Let $\bar X$ be any other local mild solution to  \eqref{E2} on the interval $[0,T_{loc}]$ which belongs to the space $\Xi (T_{loc})$.

The formula
$$\bar X(t)=S(t) \xi+\int_0^t S(t-s)[F_2(\bar X(s))+F_1(s)]ds + \int_0^t S(t-s)G(s) dW(s) $$
jointed with 
$$ X(t)=S(t) \xi+\int_0^t S(t-s)[F_2(X(s))+F_1(s)]ds + \int_0^t S(t-s)G(s) dW(s) $$
 yields that
$$X(t)-\bar X(t)=\int_0^t S(t-s)[F_2(X(s))-F_2(\bar X(s))] ds, \hspace{1cm} 0\leq t\leq T_{loc}.$$

We can then repeat the same arguments to Step 2 to deduce that  for  $0<\bar T\leq T_{loc}$, 
\begin{align}
\|X&-\bar X\|_{\Xi (\bar T)}^2\label{Eq9}\\
\leq &c_{F_2}^2  [\iota_\eta^2 B (1+2\beta-2\eta,1-2\eta)+\iota_\beta^2 B (1+2\beta-2\eta,1-2\beta)]\notag\\
&\times  {\bar T}^{2(1-\eta)} \|X-\bar X\|_{\Xi (\bar T)}^2.\notag
\end{align}
Let $\bar T$ be a positive constant such that
\begin{align*}
 &c_{F_2}^2  [\iota_\eta^2 B (1+2\beta-2\eta,1-2\eta)+\iota_\beta^2 B (1+2\beta-2\eta,1-2\beta)]\bar T^{2(1-\eta)}<1.
\end{align*}
Then,   \eqref{Eq9} implies that $X=\bar X$ a.s. on $ [0,\bar T].$

 Repeating the same procedure with  initial time $\bar T$ and  initial value $X(\bar T)=\bar X(\bar T)$, we derive that $X(\bar T +t)=\bar X(\bar T +t)$ a.s. for  $0\leq t\leq \bar T.$ This means that $X=\bar X$ a.s. on a larger interval $[0,2\bar T].$ 

We continue this procedure by finite times, the extended interval can cover the given interval $[0,T_{loc}].$ Therefore,   $X=\bar X$ a.s. on $[0,T_{loc}]$. Thanks to the continuity of $X$ and $\bar X$ on $[0,T_{loc}]$, they are indistinguishable.
\end{proof}

Next, we consider the case where  {\rm (Gb)} holds true.  Assume further that
\begin{itemize}
\item [(F2b)] $F_2\colon\mathcal D(A^\beta)\to E$  satisfies a Lipschitz condition of the form
     \begin{equation*} 
        \|F_2(x)-F_2(y)\|\leq c_{F_2}  \|A^\beta(x-y)\| \hspace{1cm}  \text{ a.s., }  x,y\in \mathcal D(A^\beta),
      \end{equation*}
where $c_{F_2}>0$ is some  constant.
\end{itemize}

\begin{theorem} \label{Th9} 
Let {\rm (Aa)}, {\rm (Ab)}, {\rm (F1)}, {\rm (F2b)} and {\rm (Gb)}   be satisfied.
Assume that  $\xi\in \mathcal D(A^\beta)$ a.s. such that $\mathbb E\|A^\beta\xi\|^2<\infty$.    Then, \eqref{E2} possesses a unique local mild solution $X$ in the function space:
\begin{equation*}
X\in  \mathcal C^\gamma([\epsilon,T_{loc}];\mathcal D(A^\beta))\cap \mathcal C([0,T_{loc}];\mathcal D(A^\theta)) \hspace{1cm} \text{a.s.}
\end{equation*}
for any  $ 0\leq \gamma<\min\{\frac{1}{2}-\beta,\beta\}$,  $0<\epsilon<T$ and $0\leq \theta <\beta$, where $T_{loc}$ is some positive constant in $[0,T]$ depending only on the exponents and $\mathbb E \|F_1\|_{\mathcal F^{\beta,\sigma}(E)}^2$, $\mathbb E \|F_2(0)\|^2, $ $\mathbb E \|A^\beta\xi\|^2,$   $ \mathbb E \|G\|_{\mathcal F^{\beta+\frac{1}{2},\sigma}(\gamma(H;E))}^2.$  
 Furthermore, $X$ satisfies the estimate
\begin{align}
\mathbb E  \|A^\beta X(t)\|^2      
\leq & C\mathbb E \|F_2(0)\|^2t^{2(1-\beta)}    \label{H18.2}\\
& +C[e^{-2\nu t} \mathbb E \|A^\beta\xi\|^2+\mathbb E\|F_1\|_{\mathcal F^{\beta,\sigma}(E)}^2     \notag \\
& +\mathbb E\|G\|_{\mathcal F^{\beta+\frac{1}{2},\sigma}(\gamma(H;E))}^2][1+t^{2(1-\beta)}],   \hspace{0.7cm} 0\leq t\leq T_{loc} \notag
\end{align}
with some  $C>0$ depending only on   the exponents. 
\end{theorem}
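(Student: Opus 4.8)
The plan is to reuse, essentially verbatim, the fixed point scheme of Theorem~\ref{Th8}, which becomes noticeably lighter here: under {\rm (F2b)} the nonlinearity $F_2$ acts on $\mathcal D(A^\beta)$ rather than on a strictly larger domain, so a single, \emph{unweighted} norm suffices for the underlying space. For $0<S\leq T$ I would take $\Xi(S)$ to be the Banach space (up to indistinguishability) of adapted $E$-valued processes $Y$ on $[0,S]$ with $Y(0)\in\mathcal D(A^\beta)$ a.s.\ and $\|Y\|_{\Xi(S)}=[\sup_{0\leq t\leq S}\mathbb E\|A^\beta Y(t)\|^2]^{1/2}<\infty$, and, after fixing $\kappa>0$ large enough, let $\Upsilon(S)=\{Y\in\Xi(S):\sup_{0\leq t\leq S}\mathbb E\|A^\beta Y(t)\|^2\leq\kappa^2\}$. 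On $\Upsilon(S)$ I define $\Phi$ by the same formula as in \eqref{Ph6}, namely $\Phi Y(t)=S(t)\xi+\int_0^tS(t-s)[F_1(s)+F_2(Y(s))]\,ds+\int_0^tS(t-s)G(s)\,dW(s)$, and the goal is to show $\Phi$ maps $\Upsilon(S)$ into itself and is a contraction for $S$ small; its fixed point will be the desired local mild solution.

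For the self-mapping estimate I would apply $A^\beta$ to the three terms of $\Phi Y$ and invoke \eqref{H10}, \eqref{H11}, \eqref{H12}, Lemma~\ref{T2.12} and the Burkholder--Davis--Gundy inequality of Theorem~\ref{T9}. The deterministic terms are handled as in Theorem~\ref{Th3}: $\|A^\beta S(t)\xi\|\leq\iota_0 e^{-\nu t}\|A^\beta\xi\|$ and $\|A^\beta\int_0^tS(t-s)F_1(s)\,ds\|\leq\iota_\beta B(\beta,1-\beta)\|F_1\|_{\mathcal F^{\beta,\sigma}(E)}$. For the nonlinear term, {\rm (F2b)} gives $\mathbb E\|F_2(Y(s))\|^2\leq 2c_{F_2}^2\kappa^2+2\mathbb E\|F_2(0)\|^2$ on $\Upsilon(S)$, so by Cauchy--Schwarz $\mathbb E\|A^\beta\int_0^tS(t-s)F_2(Y(s))\,ds\|^2\leq\iota_\beta^2 t\int_0^t(t-s)^{-2\beta}\mathbb E\|F_2(Y(s))\|^2\,ds=O(t^{2(1-\beta)})$, which uses $2\beta<1$. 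For the stochastic convolution I would quote Lemma~\ref{Th5} (its estimate \eqref{Eq5} with $\theta=\beta$ yields the constant $c(E)\iota_\beta^2 B(2\beta,1-2\beta)\mathbb E\|G\|_{\mathcal F^{\beta+\frac12,\sigma}(\gamma(H;E))}^2$). Collecting these, $\mathbb E\|A^\beta\Phi Y(t)\|^2\leq C_0+o(1)$ as $S\to0$, where $C_0$ depends only on the exponents, $\mathbb E\|A^\beta\xi\|^2$, $\mathbb E\|F_1\|_{\mathcal F^{\beta,\sigma}(E)}^2$, $\mathbb E\|F_2(0)\|^2$ and $\mathbb E\|G\|_{\mathcal F^{\beta+\frac12,\sigma}(\gamma(H;E))}^2$; taking $\kappa^2>2C_0$ and $S$ small forces $\Phi Y\in\Upsilon(S)$. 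For the contraction, $\Phi Y_1(t)-\Phi Y_2(t)=\int_0^tS(t-s)[F_2(Y_1(s))-F_2(Y_2(s))]\,ds$, so {\rm (F2b)} and Cauchy--Schwarz give $\|\Phi Y_1-\Phi Y_2\|_{\Xi(S)}^2\leq\frac{\iota_\beta^2 c_{F_2}^2}{1-2\beta}S^{2(1-\beta)}\|Y_1-Y_2\|_{\Xi(S)}^2$, a genuine contraction for $S$ small. This fixes $T_{loc}$ with the stated dependence, and the Banach fixed point theorem yields a unique $X\in\Upsilon(T_{loc})$ with $X=\Phi X$, so $\sup_t\mathbb E\|A^\beta X(t)\|^2\leq\kappa^2<\infty$.

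For the regularity I would split $X=X_1+I_2$ with $X_1(t)=S(t)\xi+\int_0^tS(t-s)[F_1(s)+F_2(X(s))]\,ds$ and $I_2$ as in \eqref{Pt10}. Since $I_2=W_0$, Lemma~\ref{Th5}(i) gives $I_2\in\mathcal C^\gamma([0,T_{loc}];E)$ for $\gamma<\beta$ and $A^\theta I_2=W_\theta\in\mathcal C^\gamma([0,T_{loc}];E)$ for $\gamma<\beta-\theta$, while parts (ii)--(iii) give $A^\beta I_2=W_\beta\in\mathcal C^\gamma([\epsilon,T_{loc}];E)$ for $\gamma<\min\{\tfrac12-\beta,\beta\}$ in both regimes $\beta\geq\tfrac14$ and $\beta<\tfrac14$. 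For $X_1$, exactly as in Step~3 of the proof of Theorem~\ref{Th8} with $\eta=\beta$, I would write via the semigroup property $X_1(t)-X_1(s)=[S(t-s)-I]S(s)\xi+\int_s^tS(t-r)\Psi(r)\,dr+\int_0^s[S(t-r)-S(s-r)]\Psi(r)\,dr$ with $\Psi=F_1+F_2(X)$, apply $A^\beta$, estimate the first and third terms using $[S(t-s)-I]A^{-\rho}$ and \eqref{H10}, \eqref{H11.5} for $\tfrac12<\rho<1-\beta$, the middle one using \eqref{H10}, and use $\mathbb E\|F_2(X(r))\|^2\leq 2c_{F_2}^2\kappa^2+2\mathbb E\|F_2(0)\|^2$ together with the $r^{\beta-1}$ weight from {\rm (F1)}; this yields $\mathbb E\|A^\beta[X_1(t)-X_1(s)]\|^2\leq C_\epsilon(t-s)^{2\rho}$ for every such $\rho$, so Theorem~\ref{Th-1} gives $A^\beta X_1\in\mathcal C^\gamma([\epsilon,T_{loc}];E)$ for $\gamma<\tfrac12-\beta$, and combined with the bound for $W_\beta$ this gives $A^\beta X\in\mathcal C^\gamma([\epsilon,T_{loc}];E)$ for $\gamma<\min\{\tfrac12-\beta,\beta\}$. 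Continuity of $A^\beta X_1$ at $t=0$ follows as in Step~3 of Theorem~\ref{Th8}: $A^\beta S(\cdot)\xi+\int_0^\cdot S(\cdot-s)F_1(s)\,ds$ is continuous at $0$ by Step~1 of the proof of Theorem~\ref{Th3}, while $\mathbb E\|A^\beta\int_0^tS(t-s)F_2(X(s))\,ds\|^2\leq\frac{2\iota_\beta^2}{1-2\beta}[c_{F_2}^2\kappa^2+\mathbb E\|F_2(0)\|^2]t^{2(1-\beta)}\to0$ by the computation behind \eqref{Ph18}, so it vanishes a.s.\ along a subsequence and then everywhere near $0$ by continuity on $(0,T_{loc}]$; hence $A^\theta X=A^{\theta-\beta}A^\beta X_1+W_\theta\in\mathcal C([0,T_{loc}];E)$ for $0\leq\theta<\beta$. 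The estimate \eqref{H18.2} then comes from writing $A^\beta X=A^\beta X_3+A^\beta X_4$ with $X_3$ the mild solution of the linear equation \eqref{linear} (apply Theorem~\ref{Th4}) and $X_4(t)=\int_0^tS(t-s)F_2(X(s))\,ds$ bounded as above, recalling that $\kappa^2$ depends linearly on $\mathbb E\|A^\beta\xi\|^2$, $\mathbb E\|F_1\|_{\mathcal F^{\beta,\sigma}(E)}^2$, $\mathbb E\|F_2(0)\|^2$ and $\mathbb E\|G\|_{\mathcal F^{\beta+\frac12,\sigma}(\gamma(H;E))}^2$. Uniqueness is the iteration argument of Step~6 of Theorem~\ref{Th8}: $X-\bar X=\int_0^\cdot S(\cdot-s)[F_2(X(s))-F_2(\bar X(s))]\,ds$, the contraction estimate kills the difference on a short interval, and this is bootstrapped to all of $[0,T_{loc}]$, with indistinguishability following from continuity.

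The main obstacle, as in Theorem~\ref{Th8}, is the pathwise Hölder regularity of $X_1$: one must choose the interpolation exponent $\rho\in(\tfrac12,1-\beta)$ so that \emph{every} term in $\mathbb E\|A^\beta[X_1(t)-X_1(s)]\|^2$ is genuinely of order $(t-s)^{2\rho}$ on $[\epsilon,T_{loc}]$ and so that the Kolmogorov exponent $\rho-\tfrac12$ can be pushed up to $\tfrac12-\beta$, and then reconcile this with the strictly smaller exponent $\min\{\tfrac12-\beta,\beta\}$ forced by the stochastic convolution $W_\beta$ via Lemma~\ref{Th5}; the secondary delicate point is the continuity of $A^\beta X_1$ at the origin, where the weight $s^{1-\beta}$ in {\rm (F1)} is only just integrable against the singular kernel $(t-s)^{-\beta}$.
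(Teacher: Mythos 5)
The paper gives no proof of Theorem \ref{Th9} beyond the remark that it is ``very similar'' to Theorem \ref{Th8}, and your proposal carries out exactly that adaptation: a single unweighted $A^\beta$-norm on the underlying space (since $\eta=\beta$), Lemma \ref{Th5} with $\theta=\beta$ for the stochastic convolution under (Gb), and the Step-3 H\"older argument specialized to $\eta=\beta$. The details check out — in particular, the term that in Theorem \ref{Th8} forces the exponent $\frac{1+2\beta}{4}-\eta$ now produces $(t-s)^{2(1-\beta)}$ directly because $r^{2(\beta-\eta)}\equiv 1$, so your claimed $\frac{1}{2}-\beta$ rate for $A^\beta X_1$ on $[\epsilon,T_{loc}]$ and the final exponent $\min\{\frac{1}{2}-\beta,\beta\}$ coming from $W_\beta$ are both correct.
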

The proof of Theorem \ref{Th9} is very similar to Theorem \ref{Th8}. We then omit it.

\subsection{Strict solutions} 
This subsection  studies  strict solutions  to \eqref{E101}. We prove  existence and uniqueness of strict solutions and show their regularity, provided that the condition {\rm (Ga)} takes place.

\begin{theorem}[Existence of strict solutions] \label{Th6}
Let {\rm (Aa)}, {\rm (Ab)}, {\rm (F1)}, {\rm (F2a)} and {\rm (Ga)}   be satisfied. Let $\xi\in \mathcal D(A^\beta)$ a.s. such that $\mathbb E\|A^\beta\xi\|^2<\infty$. Assume further that there exists $\rho>0$ such that $F_2(x) \in \mathcal D(A^\rho)$ for $x\in \mathcal D(A^\eta)$ and 
\begin{equation}     \label{H20.6}
\mathbb E [\sup_{x\in \mathcal D(A^\eta)}\|A^\rho F_2(x)\|]^2<\infty. 
\end{equation} 
 Then, \eqref{E101} possesses a unique strict solution $X$ on $[0,T_{loc}],$  where $T_{loc}$ is some positive constant in $[0,T]$ depending only on the exponents and $\mathbb E \|F_1\|_{\mathcal F^{\beta,\sigma}(E)}^2$, $\mathbb E \|F_2(0)\|^2, $ $\mathbb E \|A^\beta\xi\|^2,$   $ \mathbb E \|G\|_{\mathcal F^{\beta+\frac{1}{2},\sigma}(\gamma(H;E))}^2.$  
 Furthermore, $X$ has the regularity
\begin{equation*}
X\in  \mathcal C^\gamma([\epsilon,T_{loc}];\mathcal D(A^\eta))\cap \mathcal C([0,T_{loc}];\mathcal D(A^\beta)) \hspace{1cm} \text{a.s.}
\end{equation*}
for any $0<\epsilon<T_{loc}, $ $ 0\leq \gamma<\min\{\beta+\delta-1, \frac{1}{2}-\eta,\frac{1+2\beta}{4}-\eta\}$ with the estimate
\begin{align}
\mathbb E\|AX(t)\|^2    
 \leq  &  C \mathbb E \|\xi\|^2 t^{-2}   + C   \mathbb E \|F_1\|_{\mathcal F^{\beta,\sigma}(E)}^2 t^{2(\beta-1)}       \label{H23.4}  \\
&    +C \mathbb E \|A^\delta G\|_{\mathcal F^{\beta+\frac{1}{2},\sigma}(\gamma(H;E))}^2 t^{2(\beta+\delta-1)}   \notag  \\
&   
+C \mathbb E[\sup_{x\in \mathcal D(A^\eta)}\|A^\rho F_2(x)\|]^2   t^{2\rho},    \hspace{1.5cm} 0< t\leq T_{loc}  \notag
\end{align}
with some $C>0$ depending on the exponents.
\end{theorem}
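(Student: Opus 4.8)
The plan is to promote the unique local mild solution $X$ furnished by Theorem~\ref{Th8} into a strict solution, treating the nonlinear convolution term by the same device used for $I_1$ and $I_2$ in Section~\ref{section4}. First I would apply Theorem~\ref{Th8} (all of its hypotheses are in force) to obtain, on some interval $[0,T_{loc}]$, the local mild solution $X$ with $X\in\mathcal C^\gamma([\epsilon,T_{loc}];\mathcal D(A^\eta))\cap\mathcal C([0,T_{loc}];\mathcal D(A^\beta))$ a.s., together with the estimate \eqref{semilinear evolution equationExpectationAetaXSquare} of Theorem~\ref{Th8} (whose leading term near $t=0$ is $Cs^{2(\beta-\eta)}$, and $\beta-\eta>-1/2$); the claimed regularity of the strict solution is exactly the former. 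Decompose $X=I_1+I_2+X_5$, where $I_1(t)=S(t)\xi+\int_0^tS(t-s)F_1(s)ds$ and $I_2(t)=\int_0^tS(t-s)G(s)dW(s)$ are as in the proof of Theorem~\ref{Th2}, so that by \eqref{I1equation} and \eqref{I2}, $I_1(t)+\int_0^tAI_1(s)ds=\xi+\int_0^tF_1(s)ds$ and $I_2(t)+\int_0^tAI_2(s)ds=\int_0^tG(s)dW(s)$, with $AI_1\in\mathcal C((0,T_{loc}];E)$ and $AI_2\in\mathcal C([0,T_{loc}];E)$ a.s.; and $X_5(t)=\int_0^tS(t-s)F_2(X(s))ds$.

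The heart of the proof is to show that $X_5(t)\in\mathcal D(A)$ for $0<t\leq T_{loc}$, that $AX_5\in\mathcal C((0,T_{loc}];E)$ a.s., and that $X_5(t)+\int_0^tAX_5(s)ds=\int_0^tF_2(X(s))ds$. We may assume $0<\rho\leq1$ (otherwise replace $\rho$ by $\min\{\rho,1\}$, which still satisfies \eqref{H20.6}). By {\rm (F2a)} and the bound $\mathbb E\|A^\eta X(s)\|^2\leq Cs^{2(\beta-\eta)}$, the process $s\mapsto F_2(X(s))$ satisfies $\mathbb E\int_0^{T_{loc}}\|F_2(X(s))\|ds<\infty$ (note $\beta-\eta>-1$), while \eqref{H20.6} gives $\sup_{0<s\leq T_{loc}}\|A^\rho F_2(X(s))\|\leq\sup_{x\in\mathcal D(A^\eta)}\|A^\rho F_2(x)\|=:M$ with $\mathbb E M^2<\infty$. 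Since $\|A^{1-\rho}S(u)\|\leq\iota_{1-\rho}u^{\rho-1}$ is integrable near $u=0$, the Bochner integral $\int_0^tA^{1-\rho}S(t-s)A^\rho F_2(X(s))ds$ converges absolutely; composing it with the bounded operator $A^{-1}$ and commuting fractional powers of $A$ with $S$ shows that it equals $AX_5(t)$, whence $X_5(t)\in\mathcal D(A)$ and $\|AX_5(t)\|\leq\frac{\iota_{1-\rho}}{\rho}M\,t^\rho$. The integral identity for $X_5$ then follows from Fubini's theorem (legitimate because $\int_0^t\int_0^s\|A^{1-\rho}S(s-r)\|\,\|A^\rho F_2(X(r))\|\,dr\,ds<\infty$) together with the relation $\int_r^tA^{1-\rho}S(s-r)A^\rho F_2(X(r))\,ds=[I-S(t-r)]F_2(X(r))$, whose use is legitimate because $F_2(X(r))\in\mathcal D(A^\rho)$ makes the $s$-integral absolutely convergent at $s=r$. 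Adding the three identities for $I_1,I_2,X_5$ yields $X(t)=\xi-\int_0^tAX(s)ds+\int_0^t[F_1(s)+F_2(X(s))]ds+\int_0^tG(s)dW(s)$; since $X$ is adapted and continuous with $\int_0^{T_{loc}}\|F_2(X(s))\|ds<\infty$ a.s., $X$ is a strict solution. Uniqueness is inherited from Theorem~\ref{Th8}: any strict solution is a mild solution, so the Gronwall argument of its Step~6 applies.

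It remains to establish the continuity of $AX_5$ on $(0,T_{loc}]$ and the estimate \eqref{H23.4}. For continuity the delicate point is that the integrand $A^\rho F_2(X(\cdot))$ is only known to be bounded and measurable. Given $0<t<t'\leq T_{loc}$, I would split off $\int_t^{t'}A^{1-\rho}S(t'-s)A^\rho F_2(X(s))ds=O((t'-t)^\rho)$ and, for the remaining difference on $(0,t)$, write $A^{1-\rho}S(t'-s)-A^{1-\rho}S(t-s)=-\int_{t-s}^{t'-s}A^{2-\rho}S(u)\,du$ and use $\|A^{2-\rho}S(u)\|\leq\iota_{2-\rho}u^{\rho-2}$ together with dominated convergence, the dominating function $(t-s)^{\rho-1}$ being integrable on $(0,t)$; the left-hand limit is analogous. (Alternatively, splitting $\int_0^t=\int_0^\epsilon+\int_\epsilon^t$, one may invoke the classical parabolic smoothing on $[\epsilon,T_{loc}]$, where $F_2(X(\cdot))$ is $\gamma$-H\"older by {\rm (F2a)} and Theorem~\ref{Th8}, and use analyticity of $S$ on the $\int_0^\epsilon$ part.) The estimate \eqref{H23.4} follows by combining $\|AX(t)\|^2\leq3(\|AI_1(t)\|^2+\|AI_2(t)\|^2+\|AX_5(t)\|^2)$ with the bounds \eqref{H13.6} of Theorem~\ref{Th2} for the first two summands and the pointwise bound $\|AX_5(t)\|\leq\frac{\iota_{1-\rho}}{\rho}M\,t^\rho$ for the third, the bounded multiplicative constants being absorbed into $Ct^{2(\beta-1)}$ since $t\leq T_{loc}$.

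\textbf{The main obstacle.} Hypothesis \eqref{H20.6} only asserts that $A^\rho F_2(X(\cdot))$ is a bounded measurable process; it gives no continuity of $F_2$ as a map $\mathcal D(A^\eta)\to\mathcal D(A^\rho)$. Thus the standard device of approximating the integrand of $\int_0^tS(t-s)F_2(X(s))ds$ by continuous (or step) functions valued in $\mathcal D(A^\rho)$ is not available, and both the membership $X_5(t)\in\mathcal D(A)$ with continuous $AX_5$ and the convolution integral equation must be extracted purely from the smoothing estimates \eqref{H10}--\eqref{H11} for $A^\theta S(u)$ together with dominated convergence (or via the splitting-at-$\epsilon$ argument). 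Everything else is a routine combination of the linear results of Section~\ref{section4} with Bochner-integral and Fubini bookkeeping.
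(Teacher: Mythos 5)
Your proposal is correct and follows essentially the same route as the paper: invoke Theorem \ref{Th8} for the local mild solution, split off the stochastic-linear part (handled by Theorem \ref{Th2}) from the nonlinear convolution $\int_0^t S(t-s)F_2(X(s))\,ds$, and use \eqref{H20.6} with the smoothing bound $\|A^{1-\rho}S(u)\|\leq \iota_{1-\rho}u^{\rho-1}$ to place that convolution in $\mathcal D(A)$ with the $t^{2\rho}$ contribution to \eqref{H23.4}. The only deviations are cosmetic: you derive the integral identity for the nonlinear term via Fubini rather than by differentiating the convolution as the paper does (arguably a cleaner justification), and your continuity analysis of $AX_5$ is extra work not required by Definition \ref{Def2} of a strict solution.
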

\begin{proof}
Thanks to Theorem \ref{Th8},  \eqref{E101} has a unique local mild solution $X$ in the function space:
\begin{equation*}
X\in  \mathcal C^\gamma([\epsilon,T_{loc}];\mathcal D(A^\eta))\cap \mathcal C([0,T_{loc}];\mathcal D(A^\beta)) \hspace{1cm} \text{a.s.}
\end{equation*}
for any $0<\epsilon<T_{loc}, $ $ 0\leq \gamma<\min\{\beta+\delta-1, \frac{1}{2}-\eta,\frac{1+2\beta}{4}-\eta\}$. 

First, let us show that $X$ is a local strict solution of \eqref{E101}. 
 We have
\begin{align*}
X(t) = &\Big[ S(t) \xi+\int_0^t  S(t-s)F_1(s)ds + \int_0^t  S(t-s)G(s) dW(s) \Big] \notag\\
&+\int_0^t  S(t-s)F_2(X(s))ds     \notag \\
=&X_1(t)+X_2(t), \hspace{1cm} 0\leq t\leq T_{loc}.      
\end{align*}
Since  all the assumptions of Theorem \ref{Th2} are  satisfied, it is possible to see  that 
\begin{align} 
X_1(t)=&\xi +\int_0^t F_1(s)ds-\int_0^t AX_1(s)ds      \label{H23.5}\\
&+ \int_0^t G(s)dW(s),  \hspace{1cm} 0<t\leq T_{loc},  \notag
\end{align}
and by \eqref{H13.6} 
\begin{align}
\mathbb E\|AX_1(t)\|^2    
 \leq  &  C \mathbb E \|\xi\|^2 t^{-2}   + C   \mathbb E \|F_1\|_{\mathcal F^{\beta,\sigma}(E)}^2 t^{2(\beta-1)}      \label{H23.6}  \\
&  +C \mathbb E \|A^\delta G\|_{\mathcal F^{\beta+\frac{1}{2},\sigma}(\gamma(H;E))}^2 t^{2(\beta+\delta-1)},    \hspace{0.5cm} 0< t\leq T_{loc}  \notag
\end{align}
with some $C>0$ depending on the exponents.

In the meantime,   \eqref{H10} gives that
\begin{align*}
&\int_0^t  \|AS(t-s)F_2(X(s))\|ds\\
&=\int_0^t \| A^{1-\rho} S(t-s)\| \|A^\rho F_2(X(s))\|ds \\
&\leq \iota_{1-\rho} \int_0^t  (t-s)^{\rho-1} \sup_{x\in \mathcal D(A^\eta)}\|A^\rho F_2(x)\|ds \\
&=\frac{\iota_{1-\rho}}{\rho}\sup_{x\in \mathcal D(A^\eta)}\|A^\rho F_2(x)\|  t^\rho<\infty  \hspace{1cm} \text{a.s.,} \quad 0\leq t\leq T_{loc}.
\end{align*}
Thereby, $\int_0^\cdot  AS(\cdot-s)F_2(X(s))ds $ is well-defined on $[0,T_{loc}]$.

Since $A$ is closed, 
$$AX_2=\int_0^t  AS(t-s)F_2(X(s))ds,  \hspace{1cm} 0\leq t\leq T_{loc}$$
and
\begin{equation}  \label{H23.7}
\mathbb E \|AX_2(t)\|^2  \leq \frac{\iota_{1-\rho}^2}{\rho^2}\mathbb E[\sup_{x\in \mathcal D(A^\eta)}\|A^\rho F_2(x)\|]^2   t^{2\rho},  \hspace{1cm} 0\leq t\leq T_{loc}.
\end{equation}
Thus,
\begin{align*}
\frac{dX_2(t)}{dt}&=\frac{d}{dt}  \int_0^t  S(t-s)F_2(X(s))ds\\
&=F_2(X(t))- \int_0^t AS(t-s)F_2(X(s))ds\\
&=F_2(X(t))-AX_2(t),  \hspace{1cm} 0\leq t\leq T_{loc}.
\end{align*}
This implies that
\begin{equation} \label{H24.5}
X_2(t)=\int_0^t F_2(X(s))ds-\int_0^t AX_2(s)ds,  \hspace{1cm} 0\leq t\leq T_{loc}.
\end{equation}
Combining \eqref{H23.5} and \eqref{H24.5} yields that $X=X_1+X_2$ is a local strict solution of \eqref{E101} on $[0,T_{loc}]$.

Let us now prove \eqref{H23.4}. Since
$$\mathbb E\|AX(t)\|^2 \leq 2 \mathbb E\|AX_1(t)\|^2+ 2 \mathbb E\|AX_2(t)\|^2,$$
\eqref{H23.4} follows from \eqref{H23.6} and \eqref{H23.7}.
We complete the proof.
\end{proof}

\subsection{Regular dependence of solutions on initial data}
This subsection  investigates regular dependence of local mild solutions on initial data. For this, we  use the following lemma. A proof of the lemma can be found in \cite{Ton1.5}.

\begin{lemma} \label{Thm3}
Let $b\geq a>0,  \mu>0 $ and $\nu>0$. 
Let $f\colon [0,\infty) \to [0,\infty)$ be a continuous and increasing function, and $\varphi\colon [a,b] \to [0,\infty)$ be a   bounded  function. Assume that  
$$\varphi(t) \leq f(t)+ a^{-\mu} \int_a^t (t-r)^{\nu-1}\varphi(r)dr, \quad\quad a\leq t\leq b.$$
Then, there exists $c>0$ such that 
$$\varphi(t)\leq c f(t),\hspace{1cm} a\leq s<t\leq b.$$
\end{lemma}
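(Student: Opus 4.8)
The plan is to reduce the claimed bound to a standard iterated-kernel (generalized Gronwall) argument. First I would observe that since $f$ is increasing, for any $a\le t\le b$ we have $f(r)\le f(t)$ whenever $a\le r\le t$; hence it is harmless to replace $f(r)$ by the larger constant $f(t)$ inside the integral once we fix the endpoint $t$. More precisely, fixing $t\in[a,b]$ and writing $c_0=a^{-\mu}$, the hypothesis gives
\begin{equation*}
\varphi(s)\le f(s)+c_0\int_a^s (s-r)^{\nu-1}\varphi(r)\,dr,\qquad a\le s\le t,
\end{equation*}
and because $f$ is increasing this implies $\varphi(s)\le f(t)+c_0\int_a^s (s-r)^{\nu-1}\varphi(r)\,dr$ for all $s\in[a,t]$.

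The second step is the iteration. Define the linear integral operator $(\mathcal{K}\psi)(s)=c_0\int_a^s (s-r)^{\nu-1}\psi(r)\,dr$ acting on bounded functions on $[a,t]$. Iterating the inequality $\varphi\le f(t)+\mathcal{K}\varphi$ and using that $\varphi$ is bounded on $[a,b]$ (so the remainder term $\mathcal{K}^n\varphi$ is controlled), one gets $\varphi(s)\le f(t)\sum_{n\ge 0}(\mathcal{K}^n\mathbf{1})(s)+\lim_{n}(\mathcal{K}^n\varphi)(s)$. The key computation is the convolution identity for the Riemann–Liouville–type kernel: by induction,
\begin{equation*}
(\mathcal{K}^n\mathbf{1})(s)=\frac{\bigl(c_0\,\Gamma(\nu)\bigr)^n}{\Gamma(n\nu+1)}\,(s-a)^{n\nu},
\end{equation*}
which follows from the beta-integral $\int_r^s (s-u)^{\nu-1}(u-r)^{m\nu+\nu-1}\,du=B(\nu,m\nu+\nu)(s-r)^{m\nu+\nu}$ together with $B(\nu,m\nu+\nu)=\Gamma(\nu)\Gamma(m\nu+\nu)/\Gamma((m+1)\nu)$. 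Summing the series yields $\sum_{n\ge0}(\mathcal{K}^n\mathbf{1})(s)\le E_\nu\!\bigl(c_0\Gamma(\nu)(s-a)^\nu\bigr)$, a Mittag–Leffler function, which is finite and increasing; evaluating at $s=t\le b$ gives the bound with
\begin{equation*}
c=\sup_{a\le t\le b}E_\nu\!\bigl(a^{-\mu}\Gamma(\nu)(t-a)^\nu\bigr)=E_\nu\!\bigl(a^{-\mu}\Gamma(\nu)(b-a)^\nu\bigr)<\infty.
\end{equation*}
The remainder term $(\mathcal{K}^n\varphi)(s)$ is bounded by $\|\varphi\|_\infty (\mathcal{K}^n\mathbf{1})(s)\to 0$ as $n\to\infty$ because the factorial-type denominator $\Gamma(n\nu+1)$ dominates; this is where boundedness of $\varphi$ on $[a,b]$ is used, and it is the only place it is needed.

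I expect the main obstacle to be purely bookkeeping: verifying the convolution identity for $(\mathcal{K}^n\mathbf 1)$ cleanly by induction (getting the Gamma factors right) and justifying that the tail $(\mathcal K^n\varphi)(s)$ vanishes uniformly on $[a,t]$. Neither is conceptually deep — both are classical in the theory of fractional integral inequalities (Henry's lemma) — but care is required because the kernel is singular at the diagonal when $\nu<1$, so one must keep the integrals as genuine convolutions and invoke the beta-function formula rather than naive term-by-term bounds. Once those two points are settled, the final constant $c$ depends only on $a,b,\mu,\nu$ as claimed, and the statement (written there with $a\le s<t\le b$, though only the endpoint $t$ matters) follows immediately by taking $s=t$ in the derived inequality $\varphi(t)\le c\,f(t)$.
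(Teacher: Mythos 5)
The paper does not actually prove this lemma: it only states it and refers the reader to \cite{Ton1.5}, so there is no in-paper argument to compare against. Your proposal supplies a correct, self-contained proof by the classical route for singular Gronwall inequalities (Henry's lemma): freeze $t$, use monotonicity of $f$ to replace $f(s)$ by $f(t)$, iterate the positive integral operator $\mathcal K$, compute $\mathcal K^n\mathbf 1$ via the beta function, sum the resulting Mittag--Leffler series, and kill the remainder $\mathcal K^n\varphi$ using boundedness of $\varphi$ together with the growth of $\Gamma(n\nu+1)$. All the essential points are in place, including the only two that could go wrong (positivity of $\varphi$ and of the kernel to justify the iteration, and boundedness of $\varphi$ to control the tail). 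One bookkeeping slip: the displayed beta identity should read $\int_r^s(s-u)^{\nu-1}(u-r)^{n\nu}\,du=B(\nu,n\nu+1)(s-r)^{(n+1)\nu}$ (exponent $\beta-1=n\nu$, not $m\nu+\nu-1$, and the resulting power is $\alpha+\beta-1$); your stated closed form $(\mathcal K^n\mathbf 1)(s)=(c_0\Gamma(\nu))^n(s-a)^{n\nu}/\Gamma(n\nu+1)$ is nonetheless the correct one, so the conclusion and the constant $c=E_\nu\bigl(a^{-\mu}\Gamma(\nu)(b-a)^\nu\bigr)$ stand. You are also right that the `` $a\le s<t\le b$ '' in the paper's conclusion is a typo, since $s$ plays no role.
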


Let the assumptions in Theorem \ref{Th8} be satisfied. Denote by  $\mathcal B_1$ and $\mathcal B_2$ two bounded balls: 
\begin{align}
&\mathcal B_1=\{f\in \mathcal F^{\beta,\sigma}((0,T];E) \text{ a.s. such that }  \mathbb E\|f\|_{\mathcal F^{\beta,\sigma}(E)}^2 \leq R_1^2\},   \label{mathcal B1}\\
&\mathcal B_2=\{g\in \mathcal F^{\beta+\frac{1}{2},\sigma}((0,T];\gamma(H;E))\text{ a.s. such that }  \label{mathcal B2}\\
& \hspace{1.5cm} A^\delta g\in \mathcal F^{\beta+\frac{1}{2},\sigma}((0,T];\gamma(H;E)) \text{ a.s. and } \notag \\
& \hspace{1.5cm} \mathbb E\|A^\delta g\|_{\mathcal F^{\beta+\frac{1}{2},\sigma}(\gamma(H;E))}^2 \leq R_2^2\},  \notag
\end{align}
 of the spaces $\mathcal F^{\beta,\sigma}((0,T];E)$ and $\mathcal F^{\beta+\frac{1}{2},\sigma}((0,T];\gamma(H;E))$, respectively, where $R_1$ and $R_2$ are some positive constants. 
Denote by  $B_A$  a set of random variables:  
\begin{equation}  \label{BABall}
B_A=\{\zeta: \zeta\in \mathcal D(A^\beta) \, \text{   a.s. and    }  \,   \mathbb E \|A^\beta \zeta\|^2\leq R_3^2\}, \quad 0<R_3<\infty.
\end{equation}

 According to Theorem \ref{Th8}, for every $F_1\in \mathcal B_1, G\in \mathcal B_2$ and $\xi\in B_A,$ there exists a local mild solution of \eqref{E101} on some interval $[0,T_{loc}]$. Furthermore, by virtue of   Steps 1 and   2  in  the proof of Theorem 
\ref{Th6}, 
\begin{equation} \label{Eq41}
\begin{aligned}
&\text{ there is a time    }  T_{\mathcal B_1, \mathcal B_2, B_A}>0 \text{  such that   } \\
&[0,T_{\mathcal B_1, \mathcal B_2, B_A}]\subset [0,T_{loc}]   \,  \text{   for all   } \,  (F_1,G,\xi)\in \mathcal B_1\times \mathcal B_2\times B_A. 
\end{aligned}
\end{equation}
Indeed, in view of \eqref{Ph11}, \eqref{Ph12} and \eqref{Ph13}, $T_{loc}$ can be chosen to be any time $S$ satisfying the conditions:
\begin{align*}
12\iota_\beta^2 &c_{F_2}^2 \kappa^2  B( 1+2\beta-2\eta, 1-2\eta) S^{2(1+\beta-\eta)}+\frac{12 \iota_\beta^2 \mathbb E\|F_2(0)\|^2}{1-2\eta} S^{2(1-\beta)} \leq \frac{\kappa^2}{2},\\
12\iota_\beta^2 &c_{F_2}^2 \kappa^2  B( 1+2\beta-2\eta, 1-2\beta) S^{2(1+\beta-2\eta)}+\frac{12 \iota_\beta^2 \mathbb E\|F_2(0)\|^2}{1-2\beta} S^{2(1-\beta)}\leq \frac{\kappa^2}{2},
\end{align*}
and
\begin{align*}
&c_{F_2}^2  [\iota_\eta^2 B(1+2\beta-2\eta,1-2\eta)+\iota_\beta^2 B(1+2\beta-2\eta,1-2\beta)]   S^{2(1-\eta)} <1,
\end{align*}
where $\kappa$ is defined by \eqref{Ph4} (see also \eqref{Ph10}).  Consequently,   $T_{loc}$ can be chosen being a constant which  depends continuously on  $\mathbb E \|F_2(0)\|^2$,  $ \mathbb E \|A^\beta\xi\|^2$,  $ \mathbb E\|A^\delta G\|_{\mathcal F^{\beta+\frac{1}{2},\sigma}(\gamma(H;E))}^2$,  and  $\mathbb E\|F_1\|_{\mathcal F^{\beta,\sigma}(E)}^2$. 
  Thus, \eqref{Eq41} follows.

We are now ready to state continuous dependence of local mild solutions on $(F_1,G,\xi)$ in the sense specified in the following theorem.

\begin{theorem}\label{Th10}
Let {\rm (Aa)}, {\rm (Ab)}, {\rm (F1)}, {\rm (F2a)} and {\rm (Ga)}   be satisfied.
Let $X$ and $\bar X$ be the local mild solutions of  \eqref{E101} on $[0,T_{loc}]$ and $[0,\bar T_{loc}]$ for the data $(F_1,G,\xi)$ and $(\bar F_1,\bar G,\bar \xi)$ in $\mathcal B_1\times \mathcal B_2\times B_A$, respectively. Then, there exists  $C_{\mathcal B_1, \mathcal B_2, B_A}>0$ depending only on $\mathcal B_1, \mathcal B_2$ and $ B_A$ such that 
\begin{align}
&t^{2\eta}\mathbb E  \|A^\eta[X(t)-\bar X(t)]\|^2+t^{2\eta}\mathbb E\|A^{\beta} [X(t)-\bar X(t)]\|^2    \label{Eq22}\\
&+ \mathbb E\|X(t)-\bar X(t)\|^2   \notag\\
\leq &C_{\mathcal B_1, \mathcal B_2, B_A}[\mathbb E \|\xi-\bar \xi\|^2+ t^{2\beta}   \mathbb E\|F_1-\bar F_1\|_{\mathcal F^{\beta,\sigma}(E)}^2     \notag\\
&+ \mathbb E\|A^\delta (G-\bar G)\|_{\mathcal F^{\beta+\frac{1}{2},\sigma}(\gamma(H;E))}^2], \hspace{1cm} 0<t\leq T_{\mathcal B_1, \mathcal B_2, B_A}. \notag
\end{align}
\end{theorem}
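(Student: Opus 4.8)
The plan is to reduce the comparison estimate \eqref{Eq22} to a fixed-point / contraction-type argument driven by the mild solution formula, exactly paralleling the existence proof of Theorem~\ref{Th8}, but now applied to the difference $X-\bar X$. Writing down the two mild formulas and subtracting, one obtains
\begin{align*}
X(t)-\bar X(t)=&S(t)(\xi-\bar\xi)+\int_0^t S(t-s)[F_1(s)-\bar F_1(s)]\,ds\\
&+\int_0^t S(t-s)[F_2(X(s))-F_2(\bar X(s))]\,ds+\int_0^t S(t-s)[G(s)-\bar G(s)]\,dW(s).
\end{align*}
I would introduce the weighted norm
$$\Lambda(t)=t^{2\eta}\mathbb E\|A^\eta[X(t)-\bar X(t)]\|^2+t^{2\eta}\mathbb E\|A^\beta[X(t)-\bar X(t)]\|^2+\mathbb E\|X(t)-\bar X(t)\|^2,$$
which is exactly the quantity on the left of \eqref{Eq22}, and estimate each of the four terms in the appropriate $A^\theta$-norm ($\theta\in\{0,\beta,\eta\}$).

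The first three terms are handled by the deterministic estimates used throughout Section~\ref{section4}: apply $A^\theta S(t-s)$, use \eqref{H10}, \eqref{H11}, \eqref{H12}, together with the beta-integral bounds $\int_0^t(t-s)^{-\theta}s^{\beta-1}\,ds=B(\beta,1-\theta)t^{\beta-\theta}$. The initial-data term contributes $\le\iota_{\theta}^2\,\mathbb E\|\xi-\bar\xi\|^2 t^{-2\theta}$ in the $A^\theta$-norm, which after multiplying by $t^{2\eta}$ (for $\theta=\eta,\beta$) or leaving as is (for $\theta=0$) gives a term bounded by $C\,\mathbb E\|\xi-\bar\xi\|^2$ on $(0,T_{\mathcal B_1,\mathcal B_2,B_A}]$. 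The $F_1-\bar F_1$ term gives $C t^{2\beta}\mathbb E\|F_1-\bar F_1\|_{\mathcal F^{\beta,\sigma}(E)}^2$. For the stochastic term I use Theorem~\ref{T9} together with $\|A^{\theta-\delta}S(t-s)\|\le\iota_0\|A^{\theta-\delta}\|e^{-\nu(t-s)}$ (using $\delta>1-\beta>\beta>\theta$), landing a bound of the form $C\,\mathbb E\|A^\delta(G-\bar G)\|_{\mathcal F^{\beta+1/2,\sigma}(\gamma(H;E))}^2$. The key point is that the nonlinear term, by {\rm (F2a)}, satisfies $\|F_2(X(s))-F_2(\bar X(s))\|\le c_{F_2}\|A^\eta[X(s)-\bar X(s)]\|$, so applying Cauchy--Schwarz in $s$ exactly as in Step~2 of the proof of Theorem~\ref{Th8} gives, in the $A^\theta$-norm,
$$t^{2\theta}\mathbb E\|A^\theta(\cdots)\|^2\le c_{F_2}^2\iota_\theta^2\, t^{1+2(\theta-\beta)}\int_0^t(t-s)^{-2\theta}\,\mathbb E\|A^\eta[X(s)-\bar X(s)]\|^2\,ds.$$

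Collecting everything, $\Lambda$ satisfies an integral inequality of the form
$$\Lambda(t)\le f(t)+C\int_0^t (t-s)^{-2\eta}\,s^{-2\eta}\,\Lambda(s)\,ds,$$
where $f(t)=C_{\mathcal B_1,\mathcal B_2,B_A}[\mathbb E\|\xi-\bar\xi\|^2+t^{2\beta}\mathbb E\|F_1-\bar F_1\|_{\mathcal F^{\beta,\sigma}(E)}^2+\mathbb E\|A^\delta(G-\bar G)\|_{\mathcal F^{\beta+1/2,\sigma}(\gamma(H;E))}^2]$ is exactly the right-hand side of \eqref{Eq22}, and where the exponent $-2\eta$ is $>-1$ because $\beta<1/2$ and $\max\{0,2\eta-\tfrac12\}<\beta<\eta<\tfrac12$, so a singular Gronwall-type argument applies. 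The singular-kernel factor $s^{-2\eta}$ near $s=0$ is absorbed by the weight $s^{2\eta}$ hidden inside $\Lambda(s)$: after rewriting in terms of $\Psi(s):=s^{-2\eta}\Lambda(s)$ or equivalently by a first pass showing boundedness of $\Lambda$ on $[\epsilon,T_{\mathcal B_1,\mathcal B_2,B_A}]$, one invokes Lemma~\ref{Thm3} (with the role of $a^{-\mu}$ played by $\epsilon^{-2\eta}$ and $\nu=1-2\eta$) on $[\epsilon,T_{\mathcal B_1,\mathcal B_2,B_A}]$ and then lets $\epsilon\to0$, using that $f$ is continuous and increasing. This yields $\Lambda(t)\le c\,f(t)$, which is \eqref{Eq22}.

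The main obstacle is bookkeeping the singularity at $s=0$ in the convolution kernel $(t-s)^{-2\eta}$ combined with the weight $s^{2(\beta-\eta)}$ that naturally appears when bounding $\mathbb E\|A^\eta[X(s)-\bar X(s)]\|^2$ by $s^{-2\eta}\Lambda(s)$: one must check that $\int_0^t(t-s)^{-2\eta}s^{-2\eta}\,ds$ is finite, i.e. that $2\cdot 2\eta<1$ is \emph{not} required---rather $2\eta<1$ suffices because the two singularities are at opposite endpoints, giving $B(1-2\eta,1-2\eta)t^{1-4\eta}$, which is finite precisely when $\eta<1/2$. Keeping the time-powers consistent so that the final $f(t)$ has the stated form (in particular the $t^{2\beta}$ weight on the $F_1$-term and no growth constraint on the $\xi$- and $G$-terms beyond $t\le T_{\mathcal B_1,\mathcal B_2,B_A}$) is the only delicate accounting; everything else is a routine repetition of Steps~1--2 of the proof of Theorem~\ref{Th8} and the stochastic-convolution estimates of Theorems~\ref{Th3} and \ref{Th4}, now applied to differences rather than to the solutions themselves.
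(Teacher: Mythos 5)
Your overall strategy is the one the paper follows: subtract the two mild formulas, estimate $t^{2\theta}\mathbb E\|A^\theta[X(t)-\bar X(t)]\|^2$ for $\theta\in\{0,\beta,\eta\}$ using \eqref{H10}--\eqref{H12}, Theorem~\ref{T9} and {\rm (F2a)}, assemble the weighted quantity, and close the resulting singular integral inequality with Lemma~\ref{Thm3}. (The paper keeps $q(t)=t^{2\eta}\mathbb E[\|A^\eta[X-\bar X]\|^2+\|A^\beta[X-\bar X]\|^2]$ and treats $\mathbb E\|X-\bar X\|^2$ afterwards by taking $\theta=0$ and feeding the bound on $q$ back in; bundling all three terms into your $\Lambda$ is a harmless variant since the nonlinear term only ever calls on $\mathbb E\|A^\eta[X(s)-\bar X(s)]\|^2\le s^{-2\eta}\Lambda(s)$.)

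The one step that would fail as written is your endgame: ``invoke Lemma~\ref{Thm3} on $[\epsilon,T_{\mathcal B_1,\mathcal B_2,B_A}]$ and then let $\epsilon\to0$.'' The constant produced by Lemma~\ref{Thm3} depends on the coefficient $a^{-\mu}=\epsilon^{-2\eta}$ and blows up as $\epsilon\to0$, so no uniform bound survives that limit; moreover, before Lemma~\ref{Thm3} can be applied on $[\epsilon,T]$ one must already control the piece $\int_0^\epsilon$ of the convolution, which requires a bound for $\sup_{[0,\epsilon]}\Lambda$ --- i.e.\ the ``first pass'' has to be on $[0,\epsilon]$, not on $[\epsilon,T]$ as you state. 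The correct resolution (the paper's) is to \emph{fix} $\epsilon$ once and for all by a structural smallness condition of the type \eqref{P41}, so that on $[0,\epsilon]$ the convolution term is absorbed into $\sup_{[0,\epsilon]}\Lambda$ with coefficient at most $\tfrac12$, yielding the desired bound there directly; one then applies Lemma~\ref{Thm3} on $[\epsilon,T_{\mathcal B_1,\mathcal B_2,B_A}]$ with this fixed $\epsilon$, and the resulting constant, though it contains $\epsilon^{-2\eta}$, depends only on the exponents and the balls $\mathcal B_1,\mathcal B_2,B_A$. With that repair your argument coincides with the paper's proof.
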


\begin{proof}
This theorem is proved by using analogous arguments as in the proof of Theorem \ref{Th6}. Throughout the proof, we use a  notation $C_{\mathcal B_1, \mathcal B_2,B_A}$ to denote positive constants which are defined by the exponents, and $\mathcal B_1, \mathcal B_2$ and $  B_A$. So, it may change from occurrence to occurrence.

First, we  give an estimate for 
$$t^{2\eta}\mathbb E [ \|A^\eta[X(t)-\bar X(t)]\|^2+\|A^{\beta} [X(t)-\bar X(t)]\|^2].$$
For $0 \leq \theta <\frac{1}{2}$, by using \eqref{H12}, \eqref{H10} and {\rm (F2a)}, 
\begin{align*}
&t^\theta \|A^\theta[X(t)-\bar X(t)]\|\\
= & \Big\|t^\theta A^\theta S(t)(\xi-\bar \xi)  +\int_0^t t^\theta A^\theta S(t-s) [F_1(s)-\bar F_1(s)]ds  \notag\\
&+\int_0^t t^\theta A^\theta S(t-s)[F_2(X(s))-F_2(\bar X(s))]ds   \notag\\
&+\int_0^t t^\theta A^\theta S(t-s) [G(s)-\bar G(s)]dW(s)\Big\| \notag\\
\leq & \iota_\theta \|\xi-\bar \xi\|   +\iota_\theta \|F_1-\bar F_1\|_{\mathcal F^{\beta,\sigma}(E)} \int_0^t t^\theta (t-s)^{-\theta}s^{\beta-1}ds  \notag\\
&+\iota_\theta c_{F_2}\int_0^t  t^\theta (t-s)^{-\theta}  \|A^\eta[X(s)-\bar X(s)]\| ds\notag\\
&+\Big\|\int_0^t t^\theta A^\theta S(t-s) [G(s)-\bar G(s)]dW(s)\Big\| \notag\\
= & \iota_\theta \|\xi-\bar \xi\|+\iota_\theta B(\beta,1-\theta) \|F_1-\bar F_1\|_{\mathcal F^{\beta,\sigma}(E)}   t^\beta\notag\\
& +\iota_\theta c_{F_2}\int_0^t  t^\theta (t-s)^{-\theta}  \|A^\eta[X(s)-\bar X(s)]\| ds\notag\\
&+\Big\|\int_0^t t^\theta A^{\theta-\delta} S(t-s) [A^\delta G(s)-A^\delta \bar G(s)]dW(s)\Big\|, \hspace{0.7cm} 0<t\leq T_{\mathcal B_1, \mathcal B_2, B_A}. \notag
\end{align*}
Thus, by \eqref{H11},
\begin{align}
&\mathbb E\|t^\theta A^\theta[X(t)-\bar X(t)]\|^2\notag\\
\leq & 4\iota_\theta^2 \mathbb E \|\xi-\bar \xi\|^2+4\iota_\theta^2  B(\beta,1-\theta)^2 \mathbb E\|F_1-\bar F_1\|_{\mathcal F^{\beta,\sigma}(E)}^2    t^{2\beta}\notag\\
& +4\iota_\theta^2 c_{F_2}^2 t^{2\theta} \mathbb E \Big[ \int_0^t  (t-s)^{-\theta}  \|A^\eta[X(s)-\bar X(s)]\| ds\Big]^2\notag\\
&+4\mathbb E \Big\|\int_0^t t^\theta A^{\theta-\delta} S(t-s) [A^\delta G(s)-A^\delta \bar G(s)]dW(s)\Big\|^2 \notag\\
\leq & 4\iota_\theta^2 \mathbb E \|\xi-\bar \xi\|^2+4\iota_\theta^2  B(\beta,1-\theta)^2 \mathbb E \|F_1-\bar F_1\|_{\mathcal F^{\beta,\sigma}(E)}^2    t^{2\beta} \notag\\
&+4\iota_\theta^2 c_{F_2}^2 t^{2\theta+1} \int_0^t   (t-s)^{-2\theta}  \mathbb E \|A^\eta[X(s)-\bar X(s)]\|^2 ds\notag\\
&+4c(E) \iota_0^2 \|A^{\theta-\delta}\|^2 \mathbb E \|A^\delta (G-\bar G)\|_{\mathcal F^{\beta+\frac{1}{2},\sigma}(\gamma(H;E))}^2   \notag   \\
& \times \int_0^t t^{2\theta} e^{-2\nu (t-s)} s^{2\beta -1}ds \notag\\
\leq  & 4\iota_\theta^2 \mathbb E \|\xi-\bar \xi\|^2+4\iota_\theta^2    B(\beta,1-\theta)^2 t^{2\beta} \mathbb E\|F_1-\bar F_1\|_{\mathcal F^{\beta,\sigma}(E)}^2      \label{Eq19}\\
&+4c(E) \iota_0^2 \|A^{\theta-\delta}\|^2   \sup_{0\leq t<\infty} \int_0^t  e^{-2\nu (t-s)} s^{2\beta -1}ds  \notag\\
& \times  t^{2\theta}   \mathbb E \|A^\delta (G-\bar G)\|_{\mathcal F^{\beta+\frac{1}{2},\sigma}(\gamma(H;E))}^2    +4\iota_\theta^2 c_{F_2}^2 t^{2\theta+1}  \int_0^t   (t-s)^{-2\theta} \notag\\
& \times  \mathbb E \|A^\eta[X(s)-\bar X(s)]\|^2 ds, \hspace{1cm} 0<t \leq T_{\mathcal B_1, \mathcal B_2, B_A}.     \notag
\end{align}

Applying these estimates  with $\theta=\beta$ and $\theta=\eta$, it follows that 
\begin{align*}
&\mathbb E\| A^\beta[X(t)-\bar X(t)]\|^2\notag\\
\leq  & 4\iota_\beta^2 \mathbb E \|\xi-\bar \xi\|^2t^{-2\beta}+4\iota_\beta^2    B(\beta,1-\beta)^2  \mathbb E\|F_1-\bar F_1\|_{\mathcal F^{\beta,\sigma}(E)}^2\notag\\
&+4c(E) \iota_0^2 \|A^{\theta-\delta}\|^2   \sup_{0\leq t<\infty} \int_0^t  e^{-2\nu (t-s)} s^{2\beta -1}ds   \notag\\
& \times \mathbb E\|A^\delta(G-\bar G)\|_{\mathcal F^{\beta+\frac{1}{2},\sigma}(\gamma(H;E))}^2      +4\iota_\beta^2 c_{F_2}^2 t  \int_0^t   (t-s)^{-2\beta}  \notag\\
& \times  \mathbb E \|A^\eta[X(s)-\bar X(s)]\|^2 ds, \hspace{1cm} 0<t \leq T_{\mathcal B_1, \mathcal B_2, B_A},     \notag
\end{align*}
and
\begin{align*}
&t^{2\eta} \mathbb E\|A^\eta[X(t)-\bar X(t)]\|^2\notag\\
\leq  & 4\iota_\eta^2 \mathbb E \|\xi-\bar \xi\|^2+4\iota_\eta^2    B(\beta,1-\eta)^2 t^{2\beta} \mathbb E\|F_1-\bar F_1\|_{\mathcal F^{\beta,\sigma}(E)}^2\notag\\
&+4c(E) \iota_0^2 \|A^{\theta-\delta}\|^2   \sup_{0\leq t<\infty} \int_0^t  e^{-2\nu (t-s)} s^{2\beta -1}ds t^{2\eta} \notag\\
& \times  \mathbb E\|A^\delta(G-\bar G)\|_{\mathcal F^{\beta+\frac{1}{2},\sigma}(\gamma(H;E))}^2\notag   +4\iota_\eta^2 c_{F_2}^2 t^{2\eta+1}  \int_0^t   (t-s)^{-2\eta}  \notag\\
& \times\mathbb E \|A^\eta[X(s)-\bar X(s)]\|^2 ds, \hspace{1cm} 0<t \leq T_{\mathcal B_1, \mathcal B_2, B_A}.     \notag
\end{align*}

By putting 
$$q(t)=t^{2\eta}\mathbb E [ \|A^\eta[X(t)-\bar X(t)]\|^2+\|A^{\beta} [X(t)-\bar X(t)]\|^2], $$
we  obtain that
\begin{align}
q(t)\leq  & 4[\iota_\beta^2 t^{2(\eta-\beta)} +\iota_\eta^2 ] \mathbb E \|\xi-\bar \xi\|^2\notag\\
&+4[\iota_\beta^2    B(\beta,1-\beta)^2t^{2\eta}+\iota_\eta^2    B(\beta,1-\eta)^2 t^{2\beta} ]  \mathbb E \|F_1-\bar F_1\|_{\mathcal F^{\beta,\sigma}(E)}^2\notag\\
&+4c(E) \iota_0^2 \|A^{\theta-\delta}\|^2  (\iota_\beta^2+\iota_\eta^2)  \sup_{0\leq t<\infty} \int_0^t  e^{-2\nu (t-s)} s^{2\beta -1}ds t^{2\eta}   \notag\\
&\times \mathbb E\|A^\delta(G-\bar G)\|_{\mathcal F^{\beta+\frac{1}{2},\sigma}(\gamma(H;E))}^2 \notag\\
&+4 c_{F_2}^2 t^{2\eta+1}\int_0^t [ \iota_\beta^2 (t-s)^{-2\beta}+\iota_\eta^2  (t-s)^{-2\eta}]  \notag\\
&\times  \mathbb E\|A^\eta[X(s)-\bar X(s)]\|^2ds\notag\\
\leq  & 4[\iota_\beta^2 t^{2(\eta-\beta)} +\iota_\eta^2 ] \mathbb E \|\xi-\bar \xi\|^2\notag\\
&+4[\iota_\beta^2    B(\beta,1-\beta)^2t^{2\eta}+\iota_\eta^2    B(\beta,1-\eta)^2 t^{2\beta} ]  \mathbb E \|F_1-\bar F_1\|_{\mathcal F^{\beta,\sigma}(E)}^2\notag\\
&+4c(E) \iota_0^2 \|A^{\theta-\delta}\|^2  (\iota_\beta^2+\iota_\eta^2)  \sup_{0\leq t<\infty} \int_0^t  e^{-2\nu (t-s)} s^{2\beta -1}ds t^{2\eta}   \notag\\
&\times \mathbb E\|A^\delta(G-\bar G)\|_{\mathcal F^{\beta+\frac{1}{2},\sigma}(\gamma(H;E))}^2 \notag\\
&+4 c_{F_2}^2 t^{2\eta+1}\int_0^t [ \iota_\beta^2 (t-s)^{-2\beta}+\iota_\eta^2  (t-s)^{-2\eta}]  s^{-2\eta}q(s)  ds\notag\\
\leq  & C_{\mathcal B_1, \mathcal B_2, B_A}[\mathbb E \|\xi-\bar \xi\|^2+ t^{2\beta}   \mathbb E \|F_1-\bar F_1\|_{\mathcal F^{\beta,\sigma}(E)}^2       + t^{2\eta} \label{Eq47}\\
& \times \mathbb E \|A^\delta(G-\bar G)\|_{\mathcal F^{\beta+\frac{1}{2},\sigma}(\gamma(H;E))}^2] +4 c_{F_2}^2 t^{2\eta+1}\int_0^t [ \iota_\beta^2 (t-s)^{-2\beta}\notag\\
&+\iota_\eta^2  (t-s)^{-2\eta}]  s^{-2\eta}q(s) ds, \hspace{1cm} 0<t\leq T_{\mathcal B_1, \mathcal B_2, B_A}.    \notag 
\end{align}

The integral inequality  \eqref{Eq47} can be solved as follows. 
Let $\epsilon>0$  denote a small parameter. For $0\leq t\leq \epsilon,$ 
\begin{align*}
q(t) \leq   & C_{\mathcal B_1, \mathcal B_2, B_A}[\mathbb E \|\xi-\bar \xi\|^2+ t^{2\beta}   \mathbb E \|F_1-\bar F_1\|_{\mathcal F^{\beta,\sigma}(E)}^2       \notag\\
&\hspace{1.5cm} + t^{2\eta} \mathbb E \|A^\delta(G-\bar G)\|_{\mathcal F^{\beta+\frac{1}{2},\sigma}(\gamma(H;E))}^2] \notag\\
&+4 c_{F_2}^2 t^{2\eta+1}\int_0^t [ \iota_\beta^2 (t-s)^{-2\beta}+\iota_\eta^2  (t-s)^{-2\eta}]  s^{-2\eta} ds   \sup_{s\in [0,\epsilon]} q(s)\\
=& C_{\mathcal B_1, \mathcal B_2, B_A}[\mathbb E \|\xi-\bar \xi\|^2+ t^{2\beta}   \mathbb E \|F_1-\bar F_1\|_{\mathcal F^{\beta,\sigma}(E)}^2       \notag\\
&\hspace{1.5cm} + t^{2\eta} \mathbb E \|A^\delta(G-\bar G)\|_{\mathcal F^{\beta+\frac{1}{2},\sigma}(\gamma(H;E))}^2] \notag\\
&+4 c_{F_2}^2  [ \iota_\beta^2 B(1-2\eta,1-2\beta)t^{2(1-\beta)}+\iota_\eta^2  B(1-2\eta,1-2\eta) t^{2(1-\eta)}]   \notag\\
& \times  \sup_{s\in [0,\epsilon]} q(s)\\
\leq &C_{\mathcal B_1, \mathcal B_2, B_A}[\mathbb E \|\xi-\bar \xi\|^2+ \epsilon^{2\beta}   \mathbb E \|F_1-\bar F_1\|_{\mathcal F^{\beta,\sigma}(E)}^2       \notag\\
&\hspace{1.5cm} + \epsilon^{2\eta} \mathbb E \|A^\delta(G-\bar G)\|_{\mathcal F^{\beta+\frac{1}{2},\sigma}(\gamma(H;E))}^2] \notag\\
&+4 c_{F_2}^2  [ \iota_\beta^2 B(1-2\eta,1-2\beta)\epsilon^{2(1-\beta)}+\iota_\eta^2  B(1-2\eta,1-2\eta) \epsilon^{2(1-\eta)}]   \notag\\
& \times   \sup_{s\in [0,\epsilon]} q(s).
\end{align*}

Taking supremum on $[0,\epsilon]$, it yields that 
\begin{align*}
&\{1-4 c_{F_2}^2  [ \iota_\beta^2  B(1-2\eta,1-2\beta)\epsilon^{2(1-\beta)}+\iota_\eta^2   B(1-2\eta,1-2\eta) \epsilon^{2(1-\eta)}] \}  \\
&\times  \sup_{s\in [0,\epsilon]} q(s)\\
&\leq C_{\mathcal B_1, \mathcal B_2, B_A}[\mathbb E \|\xi-\bar \xi\|^2+ \epsilon^{2\beta}   \mathbb E \|F_1-\bar F_1\|_{\mathcal F^{\beta,\sigma}(E)}^2       \notag\\
&\hspace{1.5cm} + \epsilon^{2\eta} \mathbb E \|A^\delta(G-\bar G)\|_{\mathcal F^{\beta+\frac{1}{2},\sigma}(\gamma(H;E))}^2].
\end{align*}
If $\epsilon$ is taken sufficiently small so that 
\begin{equation} \label{P41}
c_{F_2}^2  [ \iota_\beta^2  B(1-2\eta,1-2\beta)\epsilon^{2(1-\beta)}+\iota_\eta^2   B(1-2\eta,1-2\eta) \epsilon^{2(1-\eta)}] \leq \frac{1}{8},
\end{equation}
 then
\begin{align}  
\sup_{s\in [0,\epsilon]} q(s)  \leq &C_{\mathcal B_1, \mathcal B_2, B_A}[\mathbb E \|\xi-\bar \xi\|^2+ \epsilon^{2\beta}   \mathbb E \|F_1-\bar F_1\|_{\mathcal F^{\beta,\sigma}(E)}^2       \label{P42}\\
&\hspace{1.5cm} + \epsilon^{2\eta} \mathbb E \|A^\delta(G-\bar G)\|_{\mathcal F^{\beta+\frac{1}{2},\sigma}(\gamma(H;E))}^2]. 
 \notag
\end{align}
In particular,
\begin{align}  
 q(\epsilon) \leq &C_{\mathcal B_1, \mathcal B_2, B_A}[\mathbb E \|\xi-\bar \xi\|^2+ \epsilon^{2\beta}   \mathbb E \|F_1-\bar F_1\|_{\mathcal F^{\beta,\sigma}(E)}^2       \label{P43}\\
&\hspace{1.5cm} + \epsilon^{2\eta} \mathbb E \|A^\delta(G-\bar G)\|_{\mathcal F^{\beta+\frac{1}{2},\sigma}(\gamma(H;E))}^2]. 
 \notag
\end{align}

Meanwhile, for $\epsilon<t \leq T_{\mathcal B_1, \mathcal B_2, B_A},$ 
\begin{align*}
q(t)&\\
\leq   & C_{\mathcal B_1, \mathcal B_2, B_A}[\mathbb E \|\xi-\bar \xi\|^2+ t^{2\beta}   \mathbb E \|F_1-\bar F_1\|_{\mathcal F^{\beta,\sigma}(E)}^2       \notag\\
&\hspace{1.5cm} + t^{2\eta} \mathbb E \|A^\delta(G-\bar G)\|_{\mathcal F^{\beta+\frac{1}{2},\sigma}(\gamma(H;E))}^2] \notag
\\
&+4 c_{F_2}^2 t^{2\eta+1}\int_0^\epsilon [ \iota_\beta^2 (t-s)^{-2\beta}+\iota_\eta^2  (t-s)^{-2\eta}]  s^{-2\eta} ds \sup_{s\in [0,\epsilon]} q(s) \\
&+4 c_{F_2}^2 t^{2\eta+1}\int_\epsilon^t [ \iota_\beta^2 (t-s)^{-2\beta}+\iota_\eta^2  (t-s)^{-2\eta}]  s^{-2\eta}q(s) ds \\
\leq   & C_{\mathcal B_1, \mathcal B_2, B_A}[\mathbb E \|\xi-\bar \xi\|^2+ t^{2\beta}   \mathbb E \|F_1-\bar F_1\|_{\mathcal F^{\beta,\sigma}(E)}^2       \notag\\
&\hspace{1.5cm} + t^{2\eta} \mathbb E \|A^\delta(G-\bar G)\|_{\mathcal F^{\beta+\frac{1}{2},\sigma}(\gamma(H;E))}^2] \notag
\\
&+4 c_{F_2}^2  [ \iota_\beta^2 B(1-2\eta,1-2\beta)t^{2(1-\beta)}+\iota_\eta^2  B(1-2\eta,1-2\eta) t^{2(1-\eta)}]    \\
&\times   \sup_{s\in [0,\epsilon]} q(s) \\
&+4 c_{F_2}^2  t^{2\eta+1}\int_\epsilon^t [ \iota_\beta^2 (t-s)^{2(\eta-\beta)}+\iota_\eta^2  ] (t-s)^{-2\eta} \epsilon^{-2\eta}q(s) ds \\
\leq   & C_{\mathcal B_1, \mathcal B_2, B_A}[\mathbb E \|\xi-\bar \xi\|^2+ t^{2\beta}   \mathbb E \|F_1-\bar F_1\|_{\mathcal F^{\beta,\sigma}(E)}^2       \notag\\
&\hspace{1.5cm} + t^{2\eta} \mathbb E \|A^\delta(G-\bar G)\|_{\mathcal F^{\beta+\frac{1}{2},\sigma}(\gamma(H;E))}^2] \notag
\\
&+4 c_{F_2}^2  [ \iota_\beta^2 B(1-2\eta,1-2\beta) T^{2(1-\beta)}+\iota_\eta^2  B(1-2\eta,1-2\eta) T^{2(1-\eta)}]   \\
&\times  \sup_{s\in [0,\epsilon]} q(s) +4 c_{F_2}^2 \epsilon^{-2\eta} T^{2\eta+1} [ \iota_\beta^2 T^{2(\eta-\beta)}+\iota_\eta^2  ] \int_\epsilon^t  (t-s)^{-2\eta} q(s) ds.
\end{align*}
Lemma \ref{Thm3} then provides that 
\begin{align*}
q(t) 
\leq & C_{\mathcal B_1, \mathcal B_2, B_A}[\mathbb E \|\xi-\bar \xi\|^2+ t^{2\beta}   \mathbb E \|F_1-\bar F_1\|_{\mathcal F^{\beta,\sigma}(E)}^2       \notag\\
&\hspace{1.5cm} + t^{2\eta} \mathbb E \|A^\delta(G-\bar G)\|_{\mathcal F^{\beta+\frac{1}{2},\sigma}(\gamma(H;E))}^2] \notag
\\
&+4 c_{F_2}^2  [ \iota_\beta^2 B(1-2\eta,1-2\beta)T^{2(1-\beta)}+\iota_\eta^2  B(1-2\eta,1-2\eta) T^{2(1-\eta)}]   \\
&\times  \sup_{s\in [0,\epsilon]} q(s), \hspace{1cm} \epsilon<t\leq T_{\mathcal B_1, \mathcal B_2, B_A}.
\end{align*}
Thanks to \eqref{P42},  
\begin{align*}
q(t) 
\leq & C_{\mathcal B_1, \mathcal B_2, B_A}[\mathbb E \|\xi-\bar \xi\|^2+ t^{2\beta}   \mathbb E \|F_1-\bar F_1\|_{\mathcal F^{\beta,\sigma}(E)}^2       \notag\\
&\hspace{1.5cm} + t^{2\eta} \mathbb E \|A^\delta(G-\bar G)\|_{\mathcal F^{\beta+\frac{1}{2},\sigma}(\gamma(H;E))}^2] \notag
\\
&+C_{\mathcal B_1, \mathcal B_2, B_A}[\mathbb E \|\xi-\bar \xi\|^2+ \epsilon^{2\beta}   \mathbb E \|F_1-\bar F_1\|_{\mathcal F^{\beta,\sigma}(E)}^2       \notag\\
&\hspace{1.5cm} + \epsilon^{2\eta} \mathbb E \|A^\delta(G-\bar G)\|_{\mathcal F^{\beta+\frac{1}{2},\sigma}(\gamma(H;E))}^2], \hspace{0.5cm} \epsilon<t\leq T_{\mathcal B_1, \mathcal B_2, B_A}.
\end{align*}
 Hence, 
\begin{align}
q(t) \leq & C_{\mathcal B_1, \mathcal B_2, B_A}[\mathbb E \|\xi-\bar \xi\|^2+ t^{2\beta}   \mathbb E \|F_1-\bar F_1\|_{\mathcal F^{\beta,\sigma}(E)}^2       \label{P44}\\
& + t^{2\eta} \mathbb E \|A^\delta(G-\bar G)\|_{\mathcal F^{\beta+\frac{1}{2},\sigma}(\gamma(H;E))}^2], \hspace{0.5cm} \epsilon<t\leq T_{\mathcal B_1, \mathcal B_2, B_A}. \notag
\end{align}

Thus,  \eqref{P43} and \eqref{P44} highlight that 
\begin{align}
t^{2\eta}&\mathbb E [ \|A^\eta[X(s)-\bar X(s)]\|^2+\|A^{\beta} [X(s)-\bar X(s)]\|^2]\notag\\
=&q(t)\notag\\
\leq &C_{\mathcal B_1, \mathcal B_2, B_A}[\mathbb E \|\xi-\bar \xi\|^2+ t^{2\beta}   \mathbb E \|F_1-\bar F_1\|_{\mathcal F^{\beta,\sigma}(E)}^2       \label{Eq20}\\
& + t^{2\eta} \mathbb E \|A^\delta(G-\bar G)\|_{\mathcal F^{\beta+\frac{1}{2},\sigma}(\gamma(H;E))}^2],
   \hspace{1cm}   0<t\leq T_{\mathcal B_1, \mathcal B_2, B_A}.      \notag
\end{align}

Second, let us give an estimate for $\mathbb E\|X(t)-\bar X(t)\|^2$. Take $\theta=0$ in the equality  \eqref{Eq19}: 
\begin{align*}
\mathbb E&\|X(t)-\bar X(t)\|^2 \\
\leq  &  4\iota_0 \mathbb E \|\xi-\bar \xi\|^2+ 4\iota_0   B(\beta,1)^2 t^{2\beta} \mathbb E\|F_1-\bar F_1\|_{\mathcal F^{\beta,\sigma}(E)}^2\notag\\
&+4c(E) \iota_0^2 \|A^{-\delta}\|^2   \sup_{0\leq t<\infty} \int_0^t  e^{-2\nu (t-s)} s^{2\beta -1}ds  \notag\\
& \times \mathbb E\|A^\delta(G-\bar G)\|_{\mathcal F^{\beta+\frac{1}{2},\sigma}(\gamma(H;E))}^2\\
&
+4\iota_0^2 c_{F_2}^2 t  \int_0^t     \mathbb E \|A^\eta[X(s)-\bar X(s)]\|^2 ds\\
\leq  &  4\iota_0 \mathbb E \|\xi-\bar \xi\|^2+ 4\iota_0   B(\beta,1)^2 t^{2\beta} \mathbb E\|F_1-\bar F_1\|_{\mathcal F^{\beta,\sigma}(E)}^2\notag\\
&+4c(E) \iota_0^2 \|A^{-\delta}\|^2   \sup_{0\leq t<\infty} \int_0^t  e^{-2\nu (t-s)} s^{2\beta -1}ds \notag\\
& \times \mathbb E\|A^\delta(G-\bar G)\|_{\mathcal F^{\beta+\frac{1}{2},\sigma}(\gamma(H;E))}^2  \notag\\
&+4\iota_0^2  c_{F_2}^2 t\int_0^t   s^{-2\eta}q(s)ds.\notag
\end{align*}
By \eqref{Eq20}, 
 \begin{align*}
 t \int_0^t   s^{-2\eta}q(s)ds
 \leq & C_{\mathcal B_1, \mathcal B_2, B_A} t\int_0^t   s^{-2\eta}
[\mathbb E \|\xi-\bar \xi\|^2+ s^{2\beta}   \mathbb E \|F_1-\bar F_1\|_{\mathcal F^{\beta,\sigma}(E)}^2 \\
 &+ s^{2\eta} \mathbb E \|A^\delta(G-\bar G)\|_{\mathcal F^{\beta+\frac{1}{2},\sigma}(\gamma(H;E))}^2]ds\notag\\
  \leq &C_{\mathcal B_1, \mathcal B_2, B_A} t^{2(1-\eta)}\mathbb E \|\xi-\bar \xi\|^2\notag\\
  &+C_{\mathcal B_1, \mathcal B_2, B_A} t^{2(1+\beta-\eta)} \mathbb E\|F_1-\bar F_1\|_{\mathcal F^{\beta,\sigma}(E)}^2   \notag\\
&+ C_{\mathcal B_1, \mathcal B_2, B_A} t^2  \mathbb E\|A^\delta(G-\bar G)\|_{\mathcal F^{\beta+\frac{1}{2},\sigma}(\gamma(H;E))}^2.
  \end{align*}
Therefore, 
 \begin{align}
 & \mathbb E\|X(t)-\bar X(t)\|^2       \label{Eq21} \\
& \leq C_{\mathcal B_1, \mathcal B_2, B_A}[\mathbb E \|\xi-\bar \xi\|^2+ t^{2\beta}   \mathbb E \|F_1-\bar F_1\|_{\mathcal F^{\beta,\sigma}(E)}^2       \notag\\
 &\hspace{0.5cm} +  \mathbb E\|A^\delta(G-\bar G)\|_{\mathcal F^{\beta+\frac{1}{2},\sigma}(\gamma(H;E))}^2],     \hspace{1cm} 0<t\leq  T_{\mathcal B_1, \mathcal B_2, B_A}.     \notag
\end{align}
Thus,  \eqref{Eq20} and \eqref{Eq21} imply  \eqref{Eq22}.
 It completes the proof.
\end{proof}

\section{Applications}  \label{section6}
This section  gives some examples to illustrate our results. We first present some stochastic PDEs and then formulate them as the Cauchy problems for abstract stochastic parabolic evolution equations of 
the form \eqref{E2}.  We only concentrate  on  strict solutions to the equations.

\subsection{Example 1 (A neurophysiology model)} 
  Walsh \cite{Walsh} proposed  a model arising in neurophysiology as follows.  Nerve cells operating by a mixture of chemical, biological and electrical properties are regarded as long thin cylinders, which act much like electrical cables. 

Denote by $V(t,x)$ the electrical potential at time $t$ and  position $x$. If  such a cylinder is identified with the interval $[0,L],$ then $V$ satisfies a nonlinear equations coupled with a system of ordinary differential equations, called the Hodgkin-Huxley equations.  In some certain ranges of the values of the potential, the equations are approximated by the cable equation:
 \begin{equation} \label{P31}
 \begin{cases}
\frac{\partial V}{\partial t}=\frac{\partial^2 V}{\partial x^2} -V, \\
 V\colon [0,\infty)\times [0,L] \to \mathbb R,  \\
 t\geq 0, x\in [0,L].
 \end{cases}
 \end{equation}
 
 Consider \eqref{P31} with an additional external force, called impulses of current. Neurons receive these impulses of current via synapses on their surface. Denote by $F(t)$ the current arriving at time $t$, then  the electrical potential satisfies the equation: 
  \begin{equation*} 
\begin{cases}
\frac{\partial V}{\partial t}=\frac{\partial^2 V}{\partial x^2} -V +F(t), \\
 V\colon [0,\infty)\times [0,L] \to \mathbb R,  \\
 t\geq 0, x\in [0,L].
 \end{cases}
 \end{equation*}
 
Suppose now that the current $F$ is perturbed by infinitely many  white noises. Suppose further that the intensity of each white noise depends separately on both position $x$ and time $t$. In other words, $F$ is perturbed by the sum $\Sigma_{j=1}^\infty e_j(x) g_j(t) B_j(t)$ of noises, where $\{e_j\}_{j=1}^\infty$ and $\{g_j\}_{j=1}^\infty$ are two sequences of functions, and $\{B_j\}_{j=1}^\infty$
is a family of independent real-value standard Wiener processes on a complete filtered  probability space  $(\Omega, \mathcal F,\mathcal F_t,\mathbb P).$

In order to handle the resulting equation, we assume that $\{e_j\}_{j=1}^\infty$ is an orthonormal and complete basis of some Hilbert space, say $H_0$; and $g_j(t) = g_0(t)$ for all $j$. It is known that (e.g., \cite{prato}) the series $\Sigma_{j=1}^\infty e_j(x)  B_j(t)$ converges to a cylindrical Wiener process on a separable Hilbert space $H\supseteq H_0$, where the embedding of $H_0$ into $H$ is a Hilbert-Schmidt operator. Denote by $W$ the  cylindrical Wiener process.
 This leads  to a stochastic partial differential equation:
  \begin{equation} \label{P32}
  \begin{cases}
\begin{aligned}
&\frac{\partial V}{\partial t}(t,x)=\frac{\partial^2 V}{\partial x^2}(t,x) -V(t,x) +F(t) &+  G(t)  \dot W(t),\\
& & \text{ in } (0,\infty)\times [0,L],\\
& \frac{\partial V}{\partial x}(0,t)= \frac{\partial V}{\partial x}(L,t)=0, &\text{ on } (0,\infty),\\
&V(0,x)=V_0(x), &\text{ on } (0,L).
\end{aligned}
 \end{cases}
 \end{equation}

The first author studied mild solutions to  \eqref{P32} in $L^2([0,L])$ (see \cite{Ton1}). Let us now handle \eqref{P32}  in the Banach space $(E,\|\cdot\|)=(L^p([0,L]),\|\cdot\|_{L^p})$ for some $2\leq p<\infty$. Assume further that  
\begin{description}
    \item [{\rm (Ex1.1)}] For every $T>0$,  $F$ is  measurable  from $[0,T]$ to $ L^p([0,L])$  and satisfies
  $$  
          F\in \mathcal F^{\beta, \sigma}((0,T];L^p([0,L]))  \hspace{0.7cm} \text{ for some }  0<\sigma<\beta< \frac{1}{2}.$$
  \item [{\rm (Ex1.2)}] For every $T>0$,  $G$ is  square integrable, measurable  from  $[0,T]$ to  the space $\gamma(H;L^p([0,L])) $ and satisfies
  $$  
          A^\delta G\in \mathcal F^{\beta+\frac{1}{2}, \sigma} ((0,T];\gamma(H;L^p([0,L])))  \hspace{0.7cm} \text{ for some }  1-\beta<\delta\leq 1,
 $$
 where $A$ is the realization of the operator $-\frac{\partial^2}{\partial x^2}+I$
 in $E$ under the Neumann boundary conditions on the boundary $\{0,L\}$. 
  \item [{\rm (Ex1.3)}] $V_0$ is an $\mathcal  F_0$-measurable  random variable.
\end{description}

It is known that  $A$ is a sectorial operator (see \cite{yagi}).  Furthermore, we have the following lemma.
\begin{lemma}\label{lemma2}
The operator $A$ is a positive definite self-adjoint operator on $H$ with  domain 
$$\mathcal D(A)=H_N^2([0,L]):=\{u\in H^2([0,L]) \text{ such that } \frac{\partial u}{\partial x}(0)= \frac{\partial u}{\partial x}(L)=0\}.$$ 
 In addition, the domains of fractional powers $A^\theta, 0\leq \theta\leq 1,$ are characterized by 
\begin{equation*}
\mathcal D(A^\theta)=
\begin{cases}
H^{2\theta}([0,L]) \hspace{1cm} \text{ for } 0\leq \theta<\frac{3}{4},\\
H_N^{2\theta}([0,L])  \hspace{1cm} \text{ for } \frac{3}{4}< \theta\leq 1.
\end{cases}
\end{equation*}
\end{lemma}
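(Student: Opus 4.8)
The plan is to identify $A$ with the realization of $-\frac{d^2}{dx^2}+I$ in $L^p([0,L])$ (or first in $L^2$, since the statement asserts $A$ is self-adjoint on $H=L^2([0,L])$) under Neumann boundary conditions, and then to verify each assertion in turn: self-adjointness and positive definiteness, the identification of $\mathcal D(A)$, and the characterization of $\mathcal D(A^\theta)$. First I would recall that the Neumann Laplacian $-\frac{d^2}{dx^2}$ on $[0,L]$ is the operator associated, via the first representation theorem, with the closed, symmetric, nonnegative sesquilinear form $a(u,v)=\int_0^L u'\overline{v'}\,dx$ on $H^1([0,L])$; adding $I$ (i.e. the form $a(u,v)+\langle u,v\rangle$) makes it positive definite, with $\langle Au,u\rangle\ge\|u\|^2$. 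The form domain is $H^1([0,L])$, and a standard computation (integration by parts) shows that $u\in\mathcal D(A)$ forces $u\in H^2([0,L])$ together with the natural boundary condition $u'(0)=u'(L)=0$; conversely any such $u$ lies in $\mathcal D(A)$ and $Au=-u''+u$. This gives $\mathcal D(A)=H^2_N([0,L])$.

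Next I would treat the fractional powers. The key tool is that $A$ is a positive self-adjoint operator with compact resolvent, so it has an orthonormal basis of eigenfunctions; on the complex interpolation scale one has $\mathcal D(A^\theta)=[L^2,\mathcal D(A)]_\theta$ for $0\le\theta\le1$ (this is the standard fact that fractional power domains of a nonnegative self-adjoint operator coincide with complex interpolation spaces). I would then invoke the known interpolation identities for Sobolev spaces with boundary conditions on an interval: $[L^2,H^2_N]_\theta=H^{2\theta}$ when $2\theta<\tfrac32$, since the Neumann condition $u'(0)=u'(L)=0$ only makes sense — and is only "seen" by the trace operator — once $2\theta>\tfrac32$, while for $2\theta>\tfrac32$ one gets $[L^2,H^2_N]_\theta=H^{2\theta}_N$. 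The threshold $\theta=\tfrac34$ is exactly where the normal-derivative trace $u\mapsto u'(0)$ becomes a bounded functional on $H^{2\theta}$ (trace theorem: $u'\in H^{2\theta-1}$ has a trace iff $2\theta-1>\tfrac12$). For the $L^p$ version one argues analogously, using that $A$ (now on $L^p$) generates a bounded analytic semigroup with bounded imaginary powers, so $\mathcal D(A^\theta)=[L^p,\mathcal D(A)]_\theta$ again, and citing the corresponding $L^p$-Sobolev interpolation results; I would simply reference \cite{yagi} and standard interpolation-space monographs for these facts rather than reprove them.

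The main obstacle is the case distinction at $\theta=\tfrac34$ and, relatedly, what happens exactly at $\theta=\tfrac34$ (the statement deliberately leaves $\theta=\tfrac34$ out). The honest point is that the complex interpolation space $[L^2,H^2_N]_{3/4}$ is strictly between $H^{3/2}$ and $H^{3/2}_N$ (the latter not even being closed / well-defined in the usual sense at the critical exponent), so one cannot identify it with either; this is why the lemma restricts to $0\le\theta<\tfrac34$ and $\tfrac34<\theta\le1$. I would make sure the proof only claims the identification away from the critical exponent, and I would verify the boundedness of the relevant trace functionals carefully on each side of $\tfrac34$. The remaining steps — self-adjointness, positivity, $\mathcal D(A)=H^2_N$ — are routine form-theoretic arguments and I would present them compactly.

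One more ingredient worth flagging: to pass from the self-adjoint $L^2$ picture to the $L^p$ statement used elsewhere in the paper, I would note that the Neumann heat semigroup on $[0,L]$ is a contraction (submarkovian) semigroup, hence extends consistently to all $L^p$, $1\le p<\infty$, with generator $-A_p$; consistency of the resolvents then gives consistency of the fractional powers $A_p^\theta$ on $L^p\cap L^2$, and the $L^p$-interpolation identity $\mathcal D(A_p^\theta)=[L^p,H^{2}_N]_\theta$ follows from the bounded $H^\infty$-calculus of $A_p$ (again available from \cite{yagi} or from the fact that $A_p$ has bounded imaginary powers on $L^p$). With these observations in hand the proof reduces to quoting the Sobolev interpolation identities and checking the trace thresholds, which is exactly the case analysis described above.
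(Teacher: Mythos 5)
The paper does not actually prove this lemma: it is stated as a known fact in Example 1, with the surrounding text deferring to Yagi's monograph \cite{yagi} for the sectoriality of $A$ and, implicitly, for the characterization of the fractional power domains. So there is no in-paper argument to compare against; your proposal has to stand on its own, and it does. The route you describe is the standard one and is correct: the form method on $H^1([0,L])$ gives self-adjointness, positive definiteness and $\mathcal D(A)=H^2_N([0,L])$; self-adjointness (equivalently, bounded imaginary powers) identifies $\mathcal D(A^\theta)$ with the complex interpolation space $[L^2,H^2_N]_\theta$; and the classical Grisvard/Lions--Magenes interpolation identities with the trace threshold $2\theta-1>\tfrac12$ produce exactly the dichotomy at $\theta=\tfrac34$, including the correct observation that the critical exponent must be excluded because $\mathcal D(A^{3/4})$ is a Lions--Magenes-type space strictly between the two candidates. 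Two minor remarks. First, the lemma as stated is about the $L^2$ realization (hence ``self-adjoint on $H$'' and the $L^2$-based Sobolev spaces $H^{2\theta}$), so your core argument already suffices; the passage to the $L^p$ realization via the submarkovian semigroup and bounded imaginary powers is a sensible addendum for consistency with the rest of Example 1, but it is not needed for the statement itself. Second, on an interval there is an even more elementary route you could have taken: $A$ diagonalizes in the cosine basis $\cos(k\pi x/L)$ with eigenvalues $1+(k\pi/L)^2$, so $\mathcal D(A^\theta)$ is a weighted $\ell^2$ space of cosine coefficients, and the identification with $H^{2\theta}$ respectively $H^{2\theta}_N$ reduces to asking when the even periodic reflection of a function preserves $H^{2\theta}$ regularity --- which fails precisely when $2\theta>\tfrac32$ unless $u'(0)=u'(L)=0$. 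Either way, the proposal is sound.
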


Using $A$, we formulate  \eqref{P32} as the Cauchy problem for an abstract stochastic linear equation
\begin{equation} \label{P33}
\begin{cases}
dV+AVdt=F(t)dt+G(t)dW(t), \hspace{1cm} 0<t<\infty,\\
V(0)=V_0
\end{cases}
\end{equation}
in $E$. 
 Theorems \ref{Th2} and  \ref{Th3} are then available to \eqref{P33}.

\begin{theorem} 
Let {\rm (Ex1.1)}-{\rm (Ex1.3)} be satisfied.
  Then, there exists a unique  strict solution $V$  to \eqref{P32}. Furthermore, $V$  possesses  the regularity
 $$AV\in \mathcal C((0,T];L^p([0,L])) \hspace{1cm} \text{a.s.}$$
with the estimate
\begin{align*}
\mathbb E & \|V(t)\|^2  + t^2 \mathbb E \|AV(t)\|^2 \\
 \leq &  C[\mathbb E \|V_0\|^2 +      \|F_1\|_{\mathcal F^{\beta,\sigma}(L^p([0,L]))}^2   t^{2\beta}  +\|A^\delta G\|_{\mathcal F^{\beta+\frac{1}{2},\sigma}(\gamma(H;L^p([0,L])))}^2   \notag\\
& \quad \times  \{      t^{2\beta}  +t^{2(\beta+\delta)}   \}    ], \hspace{1cm} 0\leq t\leq T     \notag  
\end{align*}
for any $T>0$. 
In addition, when
 $V_0\in \mathcal D(A^\beta) $ a.s. and $\mathbb E\|A^\beta V_0\|<\infty,$ 
 the strict solution $V$  has the maximal space-time regularities
$$
 V \in \mathcal C([0,\infty);\mathcal D(A^\beta)) \cap \mathcal C^{\gamma_1}([0,\infty);L^p([0,L]))
\hspace{1cm} \text{a.s.,}
$$
$$AV \in \mathcal C^{\gamma_2}([\epsilon,\infty);L^p([0,L]))    \hspace{1cm} \text{a.s.}$$
for any  $ 0<\gamma_1<\beta$, $ 0<\gamma_2<\beta+\delta-1, 0<\gamma_2\leq \sigma $ and $\epsilon <\infty,$ and satisfies the estimate
\begin{align*}
\mathbb E \|A^\beta V(t)\|^2 
 \leq  &C[e^{-2\nu t}\mathbb E \|A^\beta V_0\|^2 +  \|F_1\|_{\mathcal F^{\beta,\sigma}(L^p([0,L]))}^2     \\
& \quad +    \|A^\delta G\|_{\mathcal F^{\beta+\frac{1}{2},\sigma}(\gamma(H;L^p([0,L])))}^2  t^{2\beta}], \hspace{1.5cm} 0\leq t\leq T  \notag
\end{align*}
for any $T>0$. 
 Here,  $C>0$ is some constant depending only on the exponents.
\end{theorem}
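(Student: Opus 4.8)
The plan is to obtain the statement as a direct application of Theorems \ref{Th2} and \ref{Th3} to the abstract Cauchy problem \eqref{P33}: I would check that the concrete data of the model \eqref{P32} meet the hypotheses of those theorems, and then patch the resulting finite-horizon solutions into a solution on the half-line. First I would record the structural facts. The space $E=L^p([0,L])$ with $2\le p<\infty$ is of M-type $2$; the Neumann realization $A$ of $-\frac{\partial^2}{\partial x^2}+I$ is sectorial by \cite{yagi}, hence it satisfies {\rm (Aa)} and {\rm (Ab)} and generates an analytic semigroup obeying \eqref{H10}, \eqref{H11}, \eqref{H11.5} and \eqref{Eq53}; and since $F_2\equiv 0$, \eqref{P33} is of the linear form \eqref{linear} with $F$ playing the role of $F_1$. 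The hypothesis {\rm (Ex1.1)} says precisely that $F\in\mathcal F^{\beta,\sigma}((0,T];L^p([0,L]))$ for some $0<\sigma<\beta<\frac{1}{2}$; because $F$ is deterministic, $\mathbb E\|F\|_{\mathcal F^{\beta,\sigma}(L^p)}^2=\|F\|_{\mathcal F^{\beta,\sigma}(L^p)}^2<\infty$, so {\rm (F1)} holds. Likewise {\rm (Ex1.2)} is exactly {\rm (Ga)}; moreover, applying \eqref{H12} to $A^\delta G$ gives $\|G(t)\|_{\gamma(H;L^p)}\le \|A^{-\delta}\|\,\|A^\delta G\|_{\mathcal F^{\beta+\frac{1}{2},\sigma}(\gamma(H;L^p))}\,t^{\beta-\frac{1}{2}}$, which is square-integrable on $(0,T)$ since $2\beta-1>-1$, so the deterministic process $G$ lies in $\mathcal I^2([0,T])$. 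Finally {\rm (Ex1.3)} is the standing assumption on the initial value.

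With these verifications, Theorem \ref{Th2} applied on each interval $[0,T]$ produces a unique strict solution $V$ of \eqref{P33} on $[0,T]$ with $AV\in\mathcal C((0,T];L^p([0,L]))$ a.s.\ and the estimate \eqref{H12.7}, which is precisely the first displayed bound of the theorem, the expectations $\mathbb E\|F_1\|^2$ and $\mathbb E\|A^\delta G\|^2$ collapsing to the deterministic norms. To pass to the half-line I would invoke the uniqueness Theorem \ref{Th1}: for $T_1<T_2$ the strict solution on $[0,T_2]$, restricted to $[0,T_1]$, is indistinguishable from the one on $[0,T_1]$, so the finite-horizon solutions consistently define a single $E$-valued process $V$ on $[0,\infty)$; and since $C$, $\nu$, $\iota_\theta$ in \eqref{H12.7} depend only on $A$ and the exponents, the estimate persists for all $t\ge 0$. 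The identification with a solution of the PDE is then read off from Lemma \ref{lemma2}: $V(t)\in\mathcal D(A)=H_N^2([0,L])$ says that $V(t,\cdot)\in H^2([0,L])$ satisfies the homogeneous Neumann conditions, so the abstract strict solution is a strict solution of \eqref{P32}.

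For the maximal-regularity assertion one adds the hypothesis $V_0\in\mathcal D(A^\beta)$ a.s.\ with $\mathbb E\|A^\beta V_0\|<\infty$ and invokes Theorem \ref{Th3} on each $[0,T]$, obtaining $V\in\mathcal C([0,T];\mathcal D(A^\beta))\cap\mathcal C^{\gamma_1}([0,T];L^p([0,L]))$ and $AV\in\mathcal C^{\gamma_2}([\epsilon,T];L^p([0,L]))$ a.s.\ for $0<\gamma_1<\beta$, $0<\gamma_2<\min\{\beta+\delta-1,\sigma\}$ and $0<\epsilon<T$, together with \eqref{H10.5}. The Hölder seminorms controlled in Step 4 of the proof of Theorem \ref{Th3} and in Step 3 of the proof of Theorem \ref{Th2} (see \eqref{E100}) are bounded by constants independent of $T$ --- on $[\epsilon,T]$ the weight $s^{\beta-\sigma-1}$ is at most $\epsilon^{\beta-\sigma-1}$ --- so the same patching upgrades these to $\mathcal C([0,\infty);\mathcal D(A^\beta))\cap\mathcal C^{\gamma_1}([0,\infty);L^p([0,L]))$ and $AV\in\mathcal C^{\gamma_2}([\epsilon,\infty);L^p([0,L]))$, and \eqref{H10.5} holds for all $t\ge 0$ with $C$ intrinsic. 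I expect no genuine obstacle here: the whole argument is bookkeeping around the abstract theorems, and the only two points deserving a sentence of care are that {\rm (Ex1.1)}--{\rm (Ex1.2)} translate verbatim into {\rm (F1)} and {\rm (Ga)} once one notes $F$ and $G$ are deterministic, and that the passage from $[0,T]$ to $[0,\infty)$ is legitimate because uniqueness makes the solutions consistent and the constants in the estimates do not grow with $T$.
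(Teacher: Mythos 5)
Your proposal is correct and follows essentially the route the paper intends: the paper offers no separate proof for this theorem, simply formulating \eqref{P32} as the abstract problem \eqref{P33} and citing Theorems \ref{Th2} and \ref{Th3}, which is exactly your verification that (Ex1.1)--(Ex1.3) instantiate (F1), (Ga) and the standing assumptions (with the expectations collapsing since $F$ and $G$ are deterministic). Your additional care in patching the finite-horizon solutions to $[0,\infty)$ via the uniqueness Theorem \ref{Th1} and the $T$-independence of the constants is a point the paper leaves implicit, and it is handled correctly.
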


\subsection{Example 2}
Let us generate the equation \eqref{P32}. Consider an equation of the form
\begin{equation} \label{E17}
\begin{cases}
\begin{aligned}
\frac{\partial X(x,t)}{\partial t}=&\sum_{i,j=1}^n \frac{\partial }{\partial x_j} [a_{ij}(x)\frac{\partial }{\partial x_i}X] -b(x)X+f(t) \varphi(x) \\
& +G(t) \dot W(t) \hspace{3cm}  \text { in } \mathcal O \times (0,T),
\end{aligned}\\
\begin{aligned}
& X=0  \hspace{4cm} &    \text {on } \partial\mathcal O \times (0,T),\\
& X(x,0)=X_0(x)  \hspace{4cm}&    \text { in }  \mathcal O  
\end{aligned}
\end{cases}
\end{equation}
in a bounded domain $\mathcal O\subset \mathbb R^n$ with $\mathcal C^2$ boundary $\partial\mathcal O$ $(n=1,2,\dots)$.  Here, 
\begin{description}
 \item [{\rm (Ex2.1)}]
$a_{ij}\in L^\infty (\mathcal O, \mathbb R) $ for $ 1\leq i,j\leq n$, and  satisfies 
$$ \sum_{i,j=1}^n a_{ij}(x) z_i z_j \geq a_0 \|z\|_{\mathbb R^n}^2, \hspace{1cm} z=(z_1,\dots,z_n)\in \mathbb R^n, \text{   a.e. }  x\in \mathcal O
$$
for some $a_0>0.$
 \item [{\rm (Ex2.2)}] $b\in L^\infty (\mathcal O, \mathbb R)$ and satisfies 
  $  b(x) \geq b_0$   a.e.  $ x\in \mathcal O$ for some $b_0>0$.
\end{description}
The functions $f$ and $\varphi$ are random and non-random real-valued functions, respectively, and $G(t) \dot W(t)$ denotes space-time white noise making effect on the system.

To solve \eqref{E17}, we formulate it as a problem of the form \eqref{linear}. 
Set the underlying space  $E=H^{-1}(\mathcal O)=[\mathring{H}^1(\mathcal O)]'$, where
$$\mathring{H}^1(\mathcal O)=\{u\in H^1(\mathcal O); u|_{\partial \mathcal O}=0\}.$$
  The norm in $E$ is denoted by $\|\cdot\|$. Since $E$ is the dual space of the Hilbert space $\mathring{H}^1(\mathcal O)$, $E$ is a Hilbert space. Hence $E$ is an M-type $2$ Banach space.

Assume that
\begin{description}
  \item [{\rm (Ex2.3)}] $W$ is a cylindrical   Wiener process  on a separable Hilbert space $H$, which is defined on a complete filtered probability space $(\Omega, \mathcal F,\mathcal F_t,\mathbb P).$
   \item [{\rm (Ex2.4)}] For some $ 0<\sigma<\beta< \frac{1}{2}, $ 
 $$f\in \mathcal F^{\beta, \sigma} ((0,T];\mathbb R)  \hspace{1cm} \text{ a.s.  and }  \hspace{0.3cm} \mathbb E\|f\|_{\mathcal F^{\beta, \sigma}(\mathbb R)}^2<\infty.$$
   \item [{\rm (Ex2.5)}]  $\varphi \in   H^{-1}(\mathcal O).$
   \item [{\rm (Ex2.6)}]  $G$ is  an adapted, square integrable  process from  $[0,T]\times \Omega$ to  the space  $\gamma(H;$$H^{-1}(\mathcal O)), $ which is $H$-strongly measurable. In addition, there exists $1-\beta<\delta\leq 1$ such that 
  $$  
          A^\delta G\in \mathcal F^{\beta+\frac{1}{2}, \sigma} ((0,T];\gamma(H;H^{-1}(\mathcal O))) \hspace{1cm} \text{ a.s.}
 $$
and 
$$\mathbb E\|A^\delta G\|_{\mathcal F^{\beta+\frac{1}{2}, \sigma} (\gamma(H;H^{-1}(\mathcal O))}^2<\infty.$$
    \item [{\rm (Ex2.7)}] $X_0$ is an $\mathcal  F_0$-measurable  random variable.
\end{description}

Let $A$ be a realization of the differential operator 
$$\,{-}\,\sum_{i,j=1}^n \frac{\partial }{\partial x_j} [a_{ij}(x)\frac{\partial }{\partial x_i}] +b(x)$$ in $E$ under the Dirichlet boundary conditions. According to  \cite{yagi}, it is known that $\mathcal D(A)= \mathring{H}^1(\mathcal O)$  
and  $A$  is a sectorial operator.

Using $A$,  \eqref{E17} can be written as a problem of the form \eqref{linear} with
$F_1(t)=f(t) \varphi(x).$
It is obvious that {\rm (Ex2.4)}, {\rm (Ex2.5)} and {\rm (Ex2.6)} imply {\rm (F1)} and {\rm (Ga)} for $F$ and $G$, respectively.  Theorems \ref{Th2} and  \ref{Th3} are then available to \eqref{E17}.

\begin{theorem}
Let {\rm (Ex2.1)}-{\rm (Ex2.7)} be satisfied. Then, there exists a unique  strict solution  of \eqref{E17}  in $H^{-1}(\mathcal O)$   possessing the regularity
$$ AX\in \mathcal C([0,T];H^{-1}(\mathcal O))\hspace{1cm} \text{ a.s.} $$
with the estimate
\begin{align*}
\mathbb E & \|X(t)\|^2  + t^2 \mathbb E \|AX(t)\|^2 \\
 \leq &  C[\mathbb E \|X_0\|^2 +      \mathbb E\|f\|_{\mathcal F^{\beta,\sigma}(\mathbb R)}^2 \|\varphi\|^2  t^{2\beta}+\mathbb E\|A^\delta G\|_{\mathcal F^{\beta+\frac{1}{2},\sigma}(\gamma(H;H^{-1}(\mathcal O)))}^2   \notag\\
& \quad \times  \{      t^{2\beta}  +t^{2(\beta+\delta)}   \}    ], \hspace{1cm} 0\leq t\leq T.     \notag  
\end{align*}

 Furthermore,  when
 $X_0\in \mathcal D(A^\beta) $ a.s. such that  $\mathbb E\|A^\beta X_0\|^2<\infty,$ 
 the strict solution $X$  has the maximal space-time regularities
$$
 X \in \mathcal C([0,T];\mathcal D(A^\beta)) \cap \mathcal C^{\gamma_1}([0,T];H^{-1}(\mathcal O))
\hspace{1cm} \text{a.s.,}
$$
$$AX \in \mathcal C^{\gamma_2}([\epsilon,T];H^{-1}(\mathcal O))    \hspace{1cm} \text{a.s.}$$
for any  $ 0<\gamma_1<\beta$, $ 0<\gamma_2<\beta+\delta-1, 0<\gamma_2\leq \sigma $ and $0<\epsilon <T,$ 
and satisfies  the estimate
\begin{align*}
\mathbb E \|A^\beta X(t)\|^2 
 \leq  &C[e^{-2\nu t}\mathbb E \|A^\beta X_0\|^2 +  \mathbb E \|f\|_{\mathcal F^{\beta,\sigma}(H^{-1}(\mathcal O))}^2  \|\varphi\|^2  \\
& \quad +   \mathbb E \|A^\delta G\|_{\mathcal F^{\beta+\frac{1}{2},\sigma}(\gamma(H;H^{-1}(\mathcal O)))}^2  t^{2\beta}], \hspace{1cm} 0\leq t\leq T.  \notag
\end{align*}
Here,  $C>0$ is some constant depending only on the exponents.
\end{theorem}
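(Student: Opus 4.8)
The plan is to verify the three hypotheses {\rm (Ex2.1)}--{\rm (Ex2.7)} entail the abstract hypotheses {\rm (Aa)}, {\rm (Ab)}, {\rm (F1)} and {\rm (Ga)} of Theorems~\ref{Th2} and \ref{Th3}, and then simply quote those theorems. First I would recall from \cite{yagi} that the operator $A$, namely the realization in $E=H^{-1}(\mathcal O)$ of the divergence-form elliptic operator $-\sum_{i,j}\partial_j(a_{ij}\partial_i\cdot)+b$ under Dirichlet boundary conditions, is a sectorial operator with $\mathcal D(A)=\mathring H^1(\mathcal O)$; this uses {\rm (Ex2.1)} (uniform ellipticity, giving coercivity of the associated sesquilinear form) and {\rm (Ex2.2)} (the zeroth-order term $b\ge b_0>0$, which shifts the spectrum into an open sector $\Sigma_\varpi$ with $0<\varpi<\pi/2$). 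Since $E$ is the dual of the Hilbert space $\mathring H^1(\mathcal O)$, it is itself a separable Hilbert space, hence of M-type $2$ by the remark following Definition~\ref{MType2BanachSpace}, so the abstract framework applies.

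Next I would identify $F_1(t):=f(t)\varphi$. By {\rm (Ex2.4)}, $f\in\mathcal F^{\beta,\sigma}((0,T];\mathbb R)$ a.s.\ with $\mathbb E\|f\|_{\mathcal F^{\beta,\sigma}(\mathbb R)}^2<\infty$, and by {\rm (Ex2.5)}, $\varphi\in H^{-1}(\mathcal O)=E$; since multiplication by a fixed vector $\varphi$ is a bounded linear map $\mathbb R\to E$, one checks directly from the defining properties \eqref{H11.2}--\eqref{Fbetasigma3} that $t\mapsto f(t)\varphi$ lies in $\mathcal F^{\beta,\sigma}((0,T];E)$ with $\|f\varphi\|_{\mathcal F^{\beta,\sigma}(E)}=\|f\|_{\mathcal F^{\beta,\sigma}(\mathbb R)}\|\varphi\|$, whence $\mathbb E\|F_1\|_{\mathcal F^{\beta,\sigma}(E)}^2=\|\varphi\|^2\,\mathbb E\|f\|_{\mathcal F^{\beta,\sigma}(\mathbb R)}^2<\infty$. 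This establishes {\rm (F1)}. Condition {\rm (Ga)} for $G$ with the given $\delta\in(1-\beta,1]$ is precisely {\rm (Ex2.6)}, and {\rm (Ex2.7)} gives the $\mathcal F_0$-measurable initial datum $X_0$ required in~(vi) of the standing assumptions on \eqref{E2}.

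With all hypotheses of Theorem~\ref{Th2} in place, that theorem yields a unique strict solution $X$ of \eqref{E17} (written in abstract form as \eqref{linear} with $F_1=f\varphi$) with $AX\in\mathcal C((0,T];E)$ a.s.\ and the estimate \eqref{H12.7}; substituting $\mathbb E\|F_1\|_{\mathcal F^{\beta,\sigma}(E)}^2=\|\varphi\|^2\mathbb E\|f\|_{\mathcal F^{\beta,\sigma}(\mathbb R)}^2$ gives the displayed bound. Note also that Lemma~\ref{lemma2}-type characterizations of $\mathcal D(A)$ as $\mathring H^1(\mathcal O)$ legitimize the assertion $AX\in\mathcal C([0,T];H^{-1}(\mathcal O))$ in the statement (strictly, $\mathcal C((0,T];E)$ with the continuity at $0$ coming from the extra regularity when $X_0\in\mathcal D(A^\beta)$). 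Under the additional assumption $X_0\in\mathcal D(A^\beta)$ a.s.\ with $\mathbb E\|A^\beta X_0\|^2<\infty$, Theorem~\ref{Th3} upgrades this to $X\in\mathcal C([0,T];\mathcal D(A^\beta))\cap\mathcal C^{\gamma_1}([0,T];E)$ and $AX\in\mathcal C^{\gamma_2}([\epsilon,T];E)$ for the stated ranges of $\gamma_1,\gamma_2,\epsilon$, together with the estimate \eqref{H10.5}, which after the same substitution becomes the final displayed inequality. There is no genuine obstacle here: the proof is entirely a verification of hypotheses followed by citation. The only point requiring a little care — and the one I would write out — is the stability of the weighted H\"older space $\mathcal F^{\beta,\sigma}$ under the rank-one map $f\mapsto f\varphi$, i.e.\ that the three conditions \eqref{H11.2}, \eqref{H11.3}, \eqref{Fbetasigma3} are preserved and the norm factors as claimed; everything else is immediate from \cite{yagi} and the theorems of Section~\ref{section4}.
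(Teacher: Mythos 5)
Your proposal is correct and matches the paper's own treatment, which likewise just verifies that (Ex2.1)--(Ex2.7) yield (Aa), (Ab), (F1) and (Ga) for the abstract formulation with $F_1(t)=f(t)\varphi$ and then cites Theorems \ref{Th2} and \ref{Th3}; your extra care about the map $f\mapsto f\varphi$ preserving $\mathcal F^{\beta,\sigma}$ and your remark that the continuity of $AX$ at $t=0$ is really $\mathcal C((0,T];E)$ from Theorem \ref{Th2} are both reasonable refinements of what the paper leaves implicit.
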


\subsection{Example 3}
Consider a nonlinear  problem
\begin{equation} \label{P48}
\begin{cases}
\begin{aligned}
\frac{\partial X(x,t)}{\partial t}=&\sum_{i,j=1}^n \frac{\partial }{\partial x_j} [a_{ij}(x)\frac{\partial }{\partial x_i}u] +f_1(X)+t^{\beta-1}f_2(t) \varphi(x)  \\
  &+t^{\beta-\frac{1}{2}} g(t) \dot W(t)     \hspace{1cm} \text { in } \mathbb R^n \times (0,T),\\
 X(x,0)=&X_0(x)      \hspace{4cm} \text { in }  \mathbb R^n. 
\end{aligned}
\end{cases}
\end{equation}
Here,  
\begin{description}
  \item [{\rm (Ex3.1)}] $W$ is a cylindrical   Wiener process  on a separable Hilbert space $H$, which is defined on a complete filtered probability space $(\Omega, \mathcal F,\mathcal F_t,\mathbb P).$
   \item [{\rm (Ex3.2)}]  $a_{ij}\in L^\infty (\mathbb R^n,\mathbb R)$ for $ 1\leq i,j\leq n.$ In addition,
there exists $ a_0>0 $ such that  
$$\sum_{i,j=1}^n a_{ij}(x) z_i z_j \geq a_0 \|z\|_{\mathbb R^n}^2, \hspace{0.3cm} z=(z_1,\dots,z_n)\in \mathbb R^n, \text{a.e. } x\in \mathbb R^n.
$$
\item  [{\rm (Ex3.3)}]
$\varphi \in H^{-1}(\mathbb R^n)$ and  $f_1$ is measurable function from a domain $ \mathcal D(f_1)$ of $ H^{-1}(\mathbb R^n)$ to $ \mathbb R.$
\item [{\rm (Ex3.4)}]
 $ f_2\in \mathcal C^\sigma ([0,T];\mathbb R) \text{ and } g\in \mathcal C^\sigma ([0,T];\gamma(H;H^{-1}(\mathbb R^n))). $ In addition,
$f_2(0)=0 $ and $g_2(0)=0 $ for some $ 0<\sigma<\beta<\frac{1}{2}.$
   \item [{\rm (Ex3.5)}] $X_0$ is an $\mathcal  F_0$-measurable  random variable.
 \end{description}

 Let $A$ be a realization of the differential operator 
$-\sum_{i,j=1}^n \frac{\partial }{\partial x_j} [a_{ij}(x)\frac{\partial }{\partial x_i}] +1$ in $(E,\|\cdot\|)=(H^{-1}(\mathbb R^n),\|\cdot\|_{H^{-1}(\mathbb R^n)}).$ Thanks to \cite[Theorem 2.2]{yagi}, the realization $A$ is a sectorial operator of $H^{-1}(\mathbb R^n)$   with domain
$\mathcal D(A)=H^1(\mathbb R^n).$
 As a consequence, $(-A)$ generates an analytical semigroup on $H^{-1}(\mathbb R^n)$.

Using $A$, the equation \eqref{P48} is formulated as a problem of the form \eqref{E101} in the Hilbert space $H^{-1}(\mathbb R^n),$ where  $F_1, F_2$ and $G$ are defined as follows.
The functions $F_1\colon (0,T] \to H^{-1}(\mathbb R^n)$ and $G\colon  (0,T] \to \gamma(H;H^{-1}(\mathbb R^n))$ are defined by
$$F_1(t)=t^{\beta-1} f_2(t) \varphi(x), \quad G(t)=t^{\beta-\frac{1}{2}} g(t).$$
Remark \ref{rm1} provides  that   
$$F_1 \in \mathcal F^{\beta,\sigma} ((0,T];H^{-1}(\mathbb R^n)), \quad G \in \mathcal F^{\beta+\frac{1}{2},\sigma} ((0,T];\gamma(H;H^{-1}(\mathbb R^n))). $$ 
In the meantime,  $F_2$ is defined by
$$F_2(u)=u+f_1(u).$$

Assume further that 
\begin{description}
  \item [{\rm (Ex3.6)}]
For some $c>0$ and $\max\{0, 2\eta-\frac{1}{2}\}<\beta<\eta, $ $\mathcal D(f_1)=\mathcal D(A^\eta)$ and 
$$
\|f_1(u)-f_1(v)\| \leq c \|A^\eta (u-v)\|, \hspace{0.5cm}  u,v \in  \mathcal D(A^\eta).
$$
 \end{description}

It is easily seen that  the  assumptions {\rm (Aa)}, {\rm (Ab)}, {\rm (F1)}, {\rm (F2a)} and {\rm (Ga)}  are satisfied. 
According to Theorem \ref{Th6},  we have the following theorem.

\begin{theorem}
Let {\rm (Ex3.1)}-{\rm (Ex3.6)} be satisfied. Suppose further that there exists $\rho>0$ such that
\begin{equation*}   
\mathbb E [\sup_{x\in \mathcal D(A^\eta)}\|A^\rho F_2(x)\|]^2<\infty. 
\end{equation*} 
Let  $X_0 \in \mathcal D(A^\beta)$ a.s. such that $\mathbb E\|A^\beta X_0\|^2<\infty$.  Then, \eqref{P48} possesses a unique local strict solution $X$ on some interval $[0,T_{loc}]$. Furthermore, $X$ has the regularity
\begin{equation*}
X\in  \mathcal C^\gamma([\epsilon,T_{loc}];\mathcal D(A^\eta))\cap \mathcal C([0,T_{loc}];\mathcal D(A^\beta)) \hspace{1cm} \text{a.s.}
\end{equation*}
for any $0<\epsilon<T_{loc}, $ $ 0\leq \gamma<\min\{\beta+\delta-1, \frac{1}{2}-\eta,\frac{1+2\beta}{4}-\eta\},$  and satisfies  the estimate
\begin{align*}
\mathbb E\|AX(t)\|^2    
 \leq  &  C \mathbb E \|X_0\|^2 t^{-2}   + C   \|F_1\|_{\mathcal F^{\beta,\sigma}(H^{-1}(\mathbb R^n))}^2 t^{2(\beta-1)}       \\
&    +C  \|A^\delta G\|_{\mathcal F^{\beta+\frac{1}{2},\sigma}(\gamma(H;H^{-1}(\mathbb R^n)))}^2 t^{2(\beta+\delta-1)}   \notag  \\
&   
+C [\sup_{x\in \mathcal D(A^\eta)}\|A^\rho F_2(x)\|]^2   t^{2\rho},    \hspace{1.5cm} 0< t\leq T_{loc},  \notag
\end{align*}
where  $C>0$ is some constant depending on the exponents.
\end{theorem}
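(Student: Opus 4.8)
The plan is to recognise \eqref{P48} as a concrete instance of the abstract semilinear problem \eqref{E101} and then to check, one hypothesis at a time, that the assumptions of Theorem \ref{Th6} are satisfied by the present data; once that is done, the assertion follows verbatim from that theorem, applied with $\xi=X_0$, $F_1(t)=t^{\beta-1}f_2(t)\varphi$, $G(t)=t^{\beta-\frac{1}{2}}g(t)$ and $F_2(u)=u+f_1(u)$.

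First I would record the abstract set-up. By \cite[Theorem 2.2]{yagi} the realization $A$ of $-\sum_{i,j}\partial_{x_j}[a_{ij}\partial_{x_i}]+1$ on $E=H^{-1}(\mathbb R^n)$ is a sectorial operator with $\mathcal D(A)=H^1(\mathbb R^n)$, which gives {\rm (Aa)} and {\rm (Ab)}; moreover $E$ is a Hilbert space, hence of M-type $2$, and {\rm (Ex3.1)} supplies the cylindrical Wiener process on a complete filtered probability space. Next I would verify {\rm (F1)} and {\rm (Ga)}. Since $f_2\in\mathcal C^\sigma([0,T];\mathbb R)$ with $f_2(0)=0$ and $\varphi\in H^{-1}(\mathbb R^n)$, Remark \ref{rm1}(a) yields $F_1\in\mathcal F^{\beta,\sigma}((0,T];H^{-1}(\mathbb R^n))$ with $\|F_1\|_{\mathcal F^{\beta,\sigma}(H^{-1}(\mathbb R^n))}$ bounded by $C\,\|\varphi\|\,\|f_2\|_{\mathcal C^\sigma}$, and as $f_2$ and $\varphi$ are non-random the moment requirement in {\rm (F1)} is trivial. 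Likewise, because the scalar weight $t^{\beta-\frac{1}{2}}$ commutes with $A^\delta$ and $g(0)=0$, applying Remark \ref{rm1}(a) to $A^\delta g\in\mathcal C^\sigma([0,T];\gamma(H;H^{-1}(\mathbb R^n)))$ gives $A^\delta G\in\mathcal F^{\beta+\frac{1}{2},\sigma}((0,T];\gamma(H;H^{-1}(\mathbb R^n)))$ for the prescribed $\delta\in(1-\beta,1]$, which is {\rm (Ga)}.

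Then I would verify {\rm (F2a)} and gather the remaining hypotheses. For $x,y\in\mathcal D(A^\eta)$, boundedness of $A^{-\eta}$ on $E$ gives $\|x-y\|=\|A^{-\eta}A^\eta(x-y)\|\le\|A^{-\eta}\|_{L(E)}\|A^\eta(x-y)\|$, so {\rm (Ex3.6)} implies $\|F_2(x)-F_2(y)\|\le\|x-y\|+\|f_1(x)-f_1(y)\|\le(\|A^{-\eta}\|_{L(E)}+c)\|A^\eta(x-y)\|$, i.e.\ {\rm (F2a)} with $c_{F_2}=\|A^{-\eta}\|_{L(E)}+c$; the index constraints $\max\{0,2\eta-\frac{1}{2}\}<\beta<\eta$ and $0<\sigma<\beta<\frac{1}{2}$ are exactly those posited in {\rm (Ex3.4)} and {\rm (Ex3.6)}. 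The datum $X_0$ is $\mathcal F_0$-measurable by {\rm (Ex3.5)} and lies in $\mathcal D(A^\beta)$ a.s.\ with $\mathbb E\|A^\beta X_0\|^2<\infty$ by hypothesis, while the extra smoothing $F_2(\mathcal D(A^\eta))\subset\mathcal D(A^\rho)$ together with $\mathbb E[\sup_{x}\|A^\rho F_2(x)\|]^2<\infty$ (and hence $\mathbb E\|F_2(0)\|^2=\|f_1(0)\|^2<\infty$) is assumed directly. Theorem \ref{Th6} then produces a unique local strict solution $X$ on a non-random interval $[0,T_{loc}]$ with the stated space-time regularity, and the estimate \eqref{H23.4} specialises precisely to the displayed bound.

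The only step that genuinely needs care is the verification of {\rm (F2a)} for the shifted nonlinearity $u\mapsto u+f_1(u)$: the identity perturbation must be absorbed into the Lipschitz constant, which is what the continuous embedding $\mathcal D(A^\eta)\hookrightarrow E$ — equivalently, the boundedness of $A^{-\eta}$ — delivers. Everything else is a routine transcription of the abstract conditions into the PDE language, entirely parallel to Examples 1 and 2.
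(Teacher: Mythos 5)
Your proposal is correct and follows essentially the same route as the paper: formulate \eqref{P48} as an instance of \eqref{E101} with $F_1(t)=t^{\beta-1}f_2(t)\varphi$, $G(t)=t^{\beta-\frac{1}{2}}g(t)$, $F_2(u)=u+f_1(u)$, check {\rm (Aa)}, {\rm (Ab)}, {\rm (F1)}, {\rm (F2a)}, {\rm (Ga)} via Remark \ref{rm1}(a) and the sectoriality of $A$ on $H^{-1}(\mathbb R^n)$, and invoke Theorem \ref{Th6}. You are in fact slightly more careful than the paper's terse verification, both in absorbing the identity perturbation of $F_2$ into the Lipschitz constant via the boundedness of $A^{-\eta}$ and in noting that {\rm (Ga)} really requires the H\"older regularity of $A^\delta g$ (not merely of $g$ as {\rm (Ex3.4)} literally states), which is the reading the theorem's conclusion presupposes.
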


\end{document}